\documentclass[preprint,12pt]{elsarticle}

\usepackage{amssymb}
\usepackage{amsmath}
\usepackage{lineno,hyperref}
\usepackage{algorithm}
\usepackage{multirow}
\usepackage{epsfig}
\usepackage{epstopdf}
\usepackage{subfigure}

\newtheorem{theorem}{Theorem}[section]
\newtheorem{corollary}[theorem]{Corollary}

\newtheorem{lemma}[theorem]{Lemma}
\newtheorem{problem}{Problem}
\newtheorem{proposition}[theorem]{Proposition}
\newtheorem{remark}{Remark}[section]
\newtheorem{definition}{Definition}[section]
\newtheorem{assumption}{Assumption}
\newenvironment{proof}[1][Proof]{\textbf{#1.} }

\journal{arXiv}

\begin{document}

\begin{frontmatter}

\title{A CCBM-based generalized GKB iterative regularization algorithm for inverse Cauchy problems\tnoteref{An iterative algorithm for inverse Cauchy problems}}

\author[1,2]{Rongfang Gong}
\affiliation[1]{organization={School of Mathematics, Nanjing University of Aeronautics and Astronautics},
	city={Nanjing},
	postcode={211106},
	country={China}}
\affiliation[2]{organization={Key Laboratory of Mathematical Modelling and High Performance Computing of Air Vehicles (NUAA), MIIT},
	city={Nanjing},
	postcode={211106},
	country={China}}
\ead{grf_math@nuaa.edu.cn}

\author[1]{Min Wang}
\ead{wangmin177@nuaa.edu.cn}
\author[3]{Qin Huang\corref{Corresponding author}}
\affiliation[3]{organization={School of Mathematics and Statistics, Beijing Institute of Technology},
	city={Beijing},
	postcode={100081}, 
	country={China}}
\cortext[Corresponding author]{Corresponding author.}
\ead{huangqin@bit.edu.cn}

\author[3,4]{Ye Zhang}
\affiliation[4]{organization={Shenzhen MSU-BIT University},
	city={Shenzhen},
	postcode={518172}, 
	country={China}}
\ead{ye.zhang@smbu.edu.cn}

\begin{abstract}
	This paper examines inverse Cauchy problems that are governed by a kind of elliptic partial differential equation. The inverse problems involve recovering the missing data on an inaccessible boundary from the measured data on an accessible boundary, which is severely ill-posed. By using the coupled complex boundary method (CCBM), which integrates both Dirichlet and Neumann data into a single Robin boundary condition, we reformulate the underlying problem into an operator equation. Based on this new formulation, we study the solution existence issue of the reduced problem with noisy data. A Golub-Kahan bidiagonalization (GKB) process together with Givens rotation is employed for iteratively solving the proposed operator equation. The regularizing property of the  developed method, called CCBM-GKB, and its convergence rate results are proved under a posteriori stopping rule. Finally, a linear finite element method is used for the numerical realization of CCBM-GKB. Various numerical experiments demonstrate that CCBM-GKB is a kind of accelerated iterative regularization method, as it is much faster than the classic Landweber method.
\end{abstract}

\begin{keyword}
Cauchy problem \sep Inverse problems \sep Regularization \sep Iteration algorithm \sep Golub-Kahan bidiagonalization \sep Givens rotation
\end{keyword}

\end{frontmatter}

\section{Introduction}\label{sec:intro}

We are concerned with a Cauchy problem for the Laplace equation where the unknown Cauchy data on a part of the boundary are recovered using the known Cauchy data on the remaining part of the boundary. To be more precise, let $\Omega \subset \mathbb{R}^d$ ($d = {\rm{2}}$ or ${\rm{3}}$ denotes the spatial dimensionality) be an open bounded set with Lipschitz boundary $\Gamma = \partial \Omega $ satisfying $\Gamma = \Gamma_m \cup \Gamma_u$, $\Gamma_m \cap \Gamma_u = \emptyset$, where $\Gamma_m$ and $\Gamma_u$ are known as the accessible boundary and the inaccessible boundary, respectively. Then the Cauchy problem considered in this paper is formulated as follows:
\begin{problem}\label{prob:cauchyproblem}
	Given Cauchy data $(\Phi, T)\in H^{-1/2}(\Gamma_m)\times H^{1/2}(\Gamma_m)$, recover the missing data $(\varphi, t)\in H^{-1/2}(\Gamma_u)\times H^{1/2}(\Gamma_u)$ such that
	\begin{equation}
		\left\{
		\begin{array}{ll}
			- \Delta u = 0 & \textrm{in}\ \Omega,\\
			\partial_n u = \Phi, u = T & \textrm{on}\ \Gamma _m, \\
			\partial_nu = \varphi, u = t & \textrm{on}\ \Gamma _u.
		\end{array} \right.\label{cp}
	\end{equation}
\end{problem}

This kind of identification problem, also known as data completion \cite{AN} has attracted considerable attention from mathematicians, physicists and engineers alike because of its wide-ranging applications in fields such as physics and engineering, specifically in thermostatics \cite{ABN}, plasma physics \cite{JIR}, engineering \cite{Bour}, electrocardiography \cite{CM}, electroencephalogram \cite{Analysis2014} and corrosion non-destructive evaluation \cite{Ingl}, etc.. It is well-known that Problem \ref{prob:cauchyproblem} is ill-posed \cite{BE}. In fact, Ben Belgacem showed in \cite{Belg} that the Cauchy problem is exponentially ill-posed for both smooth and non-smooth domains. We further refer to \cite{ARRV} for an overview of the stability of the Cauchy problem for general elliptic equations.

By introducing an appropriate linear operator $K$ between two infinite dimensional Hilbert spaces $\mathcal{H}_1$ and $\mathcal{H}_2$, Problem \ref{prob:cauchyproblem} can be expressed as:
\begin{equation}
	\label{operator}
	K\phi  = f,
\end{equation}
where $\phi = (\varphi ,t) \in \mathcal{H}_1$, and $f \in \mathcal{H}_2$ is a function that is dependent on the data $\Phi$ and $T$. In the conventional formulation (e.g. according to the language of optimal control), the operator $K$ is usually proposed as the parameter-to-data mapping, i.e. $f=(\Phi ,T)$ in the formulation (\ref{operator}). The ill-posedness of the Cauchy problem is indicated by the compactness of $K$ in infinite dimensional Hilbert spaces. Different mathematical reformulations give different forms of $K$ which, in general, have different null space $\text{Ker}(K)$ and values of singular values $\left\{\sigma_j(K)\right\}_{j \in \mathbb{N}}$. After arranging them in order of decreasing magnitude, it is well known that the ${\sigma _j}(K)$ goes to zero as $j \to + \infty $. Although the convergence rate of $\sigma_j(K)$ to zero behaves similarly even for different forms of $K$, they differ slightly after numerical discretization. Consequently, different expression forms of operator equations (\ref{operator}) lead to systems with different condition numbers. In the same way, different forms of $K$ produce linear systems of different structure, which may affect the stability of the underlying problem and the behavior of numerical methods further used. Hence, the construction of an appropriate operator equation (\ref{operator}) is essential in the numerical solution of Problem \ref{prob:cauchyproblem}.

In this paper, a coupled complex boundary method (CCBM) \cite{Caubet2020} is applied to Problem \ref{prob:cauchyproblem} in order to construct specific $K$ and $f$. The CCBM was originally considered appropriate for inverse source problems in \cite{CGH14}, and was subsequently extended to the inverse Cauchy problem in \cite{CGH16}. Here, we further investigate the properties of CCBM beyond the work of \cite{CGH16}. We skip the systematic analysis and list some merits of CCBM, as follows:
\begin{itemize}
	\item Both the Dirichlet and Neumann conditions are incorporated in a single complex Robin boundary condition, as the imaginary and real parts, respectively, such that the unknown Cauchy data can be reconstructed simultaneously;
	\item The data needed to fit are transferred from $\Gamma_u$ to $\Omega$ which makes the approach to the inverse problem more robust in practice;
	\item No Dirichlet boundary value problem (BVP) needs to be solved, which makes the numerical resolution of the forward problem easier;
	\item In the literature, e.g. \cite{AN,BE,Belg,ARRV}, the Dirichlet data $T$ need to have the regularity $T \in H^{1/2}(\Gamma_m)$ for guaranteeing the necessary regularity of the forward equations. In applications, $T$ is polluted by random noise, hence it is not appropriate to assume $T \in H^{1/2}(\Gamma_m)$. In the CCBM framework, we can allow $T \in L^2(\Gamma_m)$, the natural spaces in practice, which obviates the fractional-order Sobolev functions. At the same time, instead of in $H^{1/2}(\Gamma_u)$, we search for the solution $t$ to the inverse Cauchy problem in $L^2(\Gamma_u)$. Moreover, in CCBM, the noisy data is not used directly as fitting data. Instead, the right-hand side $f$ is also the solution of a BVP which uses the measured Cauchy data as boundary conditions and thus renders a smoothing effect on measurement.
\end{itemize}

Due to the severe ill-posedness of the inverse Cauchy problem and the inevitable random noise in measurement, stabilization strategies (e.g. regularization) are needed for precise reconstruction of the missing functions $(\varphi, t)$. These strategies can be classified into two categories: conventional regularization and regularization by projection (also known as the self-regularization). The conventional regularization methods for the Cauchy problem above mainly feature variational regularization \cite{ABN,ABE}, iterative regularization \cite{KMF,Leit,Marin2020,VLD2021,BCM2022}, truncation regularization \cite{QFL} etc. In this paper, a subspace projection method is applied for the resolution of Problem \ref{prob:cauchyproblem} or operator equation (\ref{operator}) which leads to the finite dimensional optimal problem:
\begin{equation}
	\min_{\phi \in M_k} \|K\phi  - f \|_{\mathcal{H}_2}^2,\label{kylov}
\end{equation}
where $M_k \subset \mathcal{H}_1$ is some $k$-dimensional subspace. In this approach, the dimensionality $k$ of the subspace plays the role of regularization.

Here, we present the main idea of our new regularization algorithm for solving Eq. (\ref{operator}). For fixed $k$, the choice of $M_k$ determines the numerical method. Different choices of $M_k$ entail different numerical algorithms. The realization of $M_k$ is often achieved by the construction of its basis through recursion. To be more precise, let $\{\tilde{u}_j\}_{j \in \mathbb{N}}$ and $\{\tilde{v}_j\}_{j \in \mathbb{N}}$ be the left and right singular functions of operator $K$ associated with the singular values $\{\sigma_j(K)\}_{j \in \mathbb{N}}$, respectively. Let $\phi^\dag$ denote the accurate solution of Eq. (\ref{operator}) in $\mathcal{H}_1$. If we take $M_k = \textrm{span}\{ \tilde{v}_1, \tilde{v}_2, \cdots,\tilde{v}_k\} $, then in the case of noise-free data, the solution of (\ref{kylov}), given by
\begin{equation}
	\phi_k = \sum_{j = 1}^k \frac{(f, \tilde{u}_j)_{\mathcal{H}_2}}{\sigma_j} \tilde{v}_j\label{svd}
\end{equation}
is the best approximation to $\phi^\dag $ among all possible choices of $k$-dimensional subspaces of $\mathcal{H}_1$, and the accuracy in $\phi_k$ improves with increasing values of $k$. The formula (\ref{svd}) is known as the truncated singular value decomposition (TSVD). However, in the presence of noisy data, TSVD may result in a solution that fails to approximate $\phi^\dag$ well, hence $k$ should be chosen carefully. In this paper, a generalized (infinite dimensional) version of Golub-Kahan Bidiagonalization (GKB) process is used to construct the basis $v_1, v_2, \cdots, v_k$ iteratively where the space $\{v_1, v_2, \cdots, v_k\}$ can be taken as a perturbation of $\{ \tilde{v}_1,  \tilde{v}_2, \cdots, \tilde{v}_k\}$. When compared with using the ``best'' $k$-dimensional $ \{\tilde{v}_1,  \tilde{v}_2, \cdots, \tilde{v}_k\}$, the GKB method demonstrates the following advantages:
\begin{itemize}
	\item $v_j$ is produced iteratively and does not, therefore, require any costly singular value decomposition in the resolution process;
	\item The singular functions $\{ \tilde{v}_j\}_{j \in \mathbb{N}}$ depend only on the operator $K$ itself while $v_1, v_2, \cdots $, depend on both $K$ and data $f$, which may restrain ill-posedness to some extent;
	\item Besides playing the role of a regularization parameter, the space dimensionality $k$ also determines the convergence rate of the numerical method. We can expect a more accurate solution via the GKB process, albeit with a smaller value of $k$. This means that we present an accelerated iterative method for the inverse Cauchy problem. The GKB method is also known as the Lanczos Bidiagonalization Method and is equivalent to LSQR for finite dimensional problems. See \cite{PS} for more details.
\end{itemize}

With the GKB method, the original inverse Cauchy problem is reduced to a linear system, associated with a much smaller size coefficient matrix ${G_k}$. Although the decay rate of the singular values of $G_k$ is similar to that of the original forward operator, $K$, the small value of $k$ makes $G_k$ mildly ill-posed. To further suppress the effect of noise in data, a Givens rotation-based QR factorization, see \cite{PS}, is applied to solve the reduced linear system. Compared with Gram-Schmidt or Householder transformations, Givens rotation offers the advantages of space-saving and providing a more stable upper triangular matrix, and is thus more suitable for ill-posed problems.

In sum, with a combination of $K$ with coupled structures, a smoothing $f$, the GKB bidiagonalization as well as Givens rotation, a new fast iterative scheme, called CCBM-GKB, is developed in this paper for Problem \ref{prob:cauchyproblem}, where the space dimensionality $k$, which also represents the iterative step, plays the role of a regularization parameter, and is chosen according to the Morozov discrepancy principle.

The remainder of the paper is organized as follows: In Section \ref{sec:ccbm}, with CCBM, the inverse Cauchy problem is transformed into an operator equation, whose existence and uniqueness are proven. Section \ref{sec:gkb} is devoted to describing a generalized version of the GKB process, followed by some theoretical results, including the regularization property and convergence rate results. In Section \ref{subsec:fem}, a finite element method is applied for the numerical simulation, with the numerical results detailed in Section \ref{subsec:numresult}. Finally, the concluding remarks are outlined in Section \ref{sec:con}.

\section{A reformulation of the Cauchy problem with the CCBM}\label{sec:ccbm}

We introduce first, the notations for function spaces and sets that will feature frequently in this paper. For a set $G$ (e.g., $\Omega$, $\Gamma$), we denote using $W^{m,p}(G)$ the standard Sobolev spaces with norm $\| \cdot \|_{m,p,G}$, $L^p(G) = W^{0,p}(G)$. In particular, $H^m(G)$ represents $W^{m,2}(G)$ with corresponding inner product $(\cdot, \cdot)_{m,G}$ and norm $\|\cdot\|_{m,G}$. Moreover, set $V = H^1(\Omega)$, $Q = L^2(\Omega)$, $Q_\Gamma = L^2(\Gamma)$, $Q_{\Gamma _m} = L^2(\Gamma_m)$, $Q_{\Gamma_u}=L^2(\Gamma_u)$, $Q_m = Q_{\Gamma_m} \times Q_{\Gamma_m}$ and $Q_u = Q_{\Gamma_u} \times Q_{\Gamma_u}$. Throughout this paper, $C$, with or without a subscript, denote a generic constant with a different value for the corresponding setups below. It is dependent only on the geometry of the domain $\Omega$.

Let $i=\sqrt{-1}$ be an imaginary unit. Then, without going into detail, the inverse Cauchy problem (\ref{cp}) can be studied through the following problem \cite{CGH16}.
\begin{problem}\label{prob:2.1}
	With $(\Phi, T)\in Q_m$, find $(\varphi, t)\in Q_u$ such that
	\[
	u_2=0\ \textmd{in}\ \Omega,
	\]
	where $\textbf{u}=u_1+i\,u_2$ solves
	\begin{equation}
		\left\{
		\begin{array}{ll}
			- \Delta \textbf{u} = 0 & \textrm{in}\ \Omega,\\
			\partial_n\textbf{u} + i\,\textbf{u}= \Phi+i\, T & \textrm{on}\ \Gamma _m, \\
			\partial_n\textbf{u} + i\,\textbf{u}= \varphi+i\, t & \textrm{on}\ \Gamma _u.
		\end{array} \right.\label{cbvp}
	\end{equation}
\end{problem}

\begin{remark}\label{remark:2.2}
	Note that $T\in H^{1/2}(\Gamma_m)$ is required for the equivalence between Problems \ref{prob:cauchyproblem} and \ref{prob:2.1}. In the instance that this regularity assumption is not satisfied, the reformulation above, provides a way of extension to Problem \ref{prob:cauchyproblem}. Moreover, in the conventional formulations, the missing Dirichlet data $t$ is often searched for in the fractional-order Sobolev space $H^{1/2}(\Gamma_u)$. However, with the reformulation here, we can look for $t$ in a more natural data space $Q_{\Gamma_u}$.
\end{remark}

For future needs, we introduce the spaces $\textbf{V}=V\otimes i\,V$, the complex version of $V$. Then for any $\textbf{u}=u_1+i\,u_2, \textbf{v}=v_1+i\,v_2\in \textbf{V}$, the inner product of $\textbf{V}$ is $(\textbf{u},\textbf{v})_{1,\Omega}=(u_1,v_1)_{1,\Omega}+(u_2,v_2)_{1,\Omega}$, and the induced norm is $\|\textbf{v}\|_{1,\Omega}=(\textbf{v},\textbf{v})^{1/2}_{1,\Omega}$. The complex version $\textbf{Q}_{\Gamma_u}$ of $Q_{\Gamma_u}$ can be defined in a similar way. For clarity of statement, use $\|\cdot\|_{\textbf{Q}_{\Gamma_u}}$ for the norm of $\textbf{Q}_{\Gamma_u}$.

Define
\begin{eqnarray}
	a(\textbf{u},\textbf{v}) = \int_\Omega \nabla \textbf{u}\cdot \nabla \bar{\textbf{v}}\,dx +	i\int_{\Gamma} \textbf{u}\,\bar{\textbf{v}}ds\quad \forall\, \textbf{u},\textbf{v}\in \textbf{V},\label{auv}\\
	F(\Phi,T,\varphi,t;\textbf{v}) = \int_{\Gamma_m}(\Phi+i\,T)\,\bar{\textbf{v}}\,ds +	\int_{\Gamma_u} (\varphi+i\,t)\, \bar{\textbf{v}}\,ds \quad \forall\,\textbf{v}\in \textbf{V}.\label{fv}
\end{eqnarray}
Then the weak form for the BVP (\ref{cbvp}) is:
\begin{equation}
	\textrm{find}\ \textbf{u}\in \textbf{V},\quad\textrm{ s.t. }a(\textbf{u},\textbf{v})= F(\Phi,T,\varphi,t;\textbf{v}) \quad\forall\,\textbf{v}\in \textbf{V}. \label{weak}
\end{equation}

\begin{proposition}\label{Prop 2.1.}
	\cite[Proposition 2.2]{CGH16} Given $(\Phi,T)\in Q_m$, $(\varphi,t)\in Q_u$, the problem (\ref{weak}) admits a unique solution $\textbf{u}\in \textbf{V}$ which depends continuously on all data. Moreover, there exists a constant $C_1$, depending only on the geometry of the domain, such that
	\begin{equation}
		\|\textbf{u}\|_{1,\Omega}\leq C_1\,(\|\Phi\|_{0,\Gamma_m}+\|T\|_{0,\Gamma_m}+\|\varphi\|_{0,\Gamma_u}+\|t\|_{0,\Gamma_u}).\label{PriEst}
	\end{equation}
\end{proposition}

For $(\Phi ,T)\in Q_m$ and $\phi=\varphi+i\,t\in \textbf{Q}_{\Gamma_u}$, we denote using $\hat{\textbf{u}} =\hat{u}_1+i\,\hat{u}_2\in \textbf{V}$ and $\tilde{\textbf{u}}=\tilde{u}_1+i\,\tilde{u}_2\in \textbf{V}$ the weak solutions of the problems
\begin{equation}
	\left\{
	\begin{array}{ll}
		- \Delta \hat{\textbf{u}} = 0 & \textrm{in}\ \Omega,\\
		\partial_n \hat{\textbf{u}} + i\,\hat{\textbf{u}} = \Phi + iT & \textrm{on}\ \Gamma _m, \\
		\partial_n \hat{\textbf{u}} + i\,\hat{\textbf{u}} = 0 & \textrm{on}\ \Gamma _u,
	\end{array} \right.\label{databvp}
\end{equation}
and
\begin{equation}
	\left\{
	\begin{array}{ll}
		- \Delta \tilde{\textbf{u}} = 0 & \textrm{in}\ \Omega,\\
		\partial_n \tilde{\textbf{u}} + i\,\tilde{\textbf{u}} = 0 & \textrm{on}\ \Gamma _m, \\
		\partial_n \tilde{\textbf{u}} + i\,\tilde{\textbf{u}} = \phi & \textrm{on}\ \Gamma _u.
	\end{array} \right.\label{forwardbvp}
\end{equation}
Then the solution $\textbf{u}$ of the problem (\ref{weak}) satisfies
\[
\textbf{u} = \tilde{\textbf{u}}+ \hat{\textbf{u}}.
\]
View $\tilde{u}_2\in V$ as a function in $Q$ and define a linear operator $K$ from $\textbf{Q}_{\Gamma_u}$ to $Q$,
\begin{equation}
	\phi\to K\phi := \tilde{u}_2. \label{forwardoperator}
\end{equation}
As a result, Problem \ref{prob:2.1} can be transformed further to the following operator equation.
\begin{problem}\label{prob:2.2}
	Given $(\Phi, T)\in Q_m$, set $\hat{\textbf{u}}=\hat{u}_1+i\,\hat{u}_2\in \textbf{V}$ and find $\phi\in \textbf{Q}_{\Gamma_u}$ such that
	\begin{equation}
		K \phi =f := -\hat{u}_2.\label{operatoreqn}
	\end{equation}
\end{problem}

Due to the equivalence of Problem \ref{prob:2.1} and Problem \ref{prob:2.2}, $K$ is compact and Eq. (\ref{operatoreqn}) is ill-posed. In fact, we can explore more properties about $K$. Recall that $K\phi = \tilde{u}_2\in V$ and $\Delta \tilde{u}_2=0$, therefore, the range $R(K)$ of $K$ is a subset of $V_1$, where $V_1$ is a subspace of $V$:
\begin{equation}
	V_1=\{v\in V\vert\ \Delta v=0\},\label{V1}
\end{equation}
that is, the operator $K$ is not surjective. Let $K^*: Q\to \textbf{Q}_{\Gamma_u}$ denote the adjoint operator of $K$. Then for any $v\in Q$, $K^*v=w_2+i\, w_1\vert_{\Gamma_u}$, where $\textbf{w}=w_1+i\,w_2\in \textbf{V}$ is the weak solution of the adjoint problem
\begin{equation}
	\left\{
	\begin{array}{ll}
		- \Delta \textbf{w} = v & \textrm{in}\ \Omega,\\
		\partial_n \textbf{w} + i\,\textbf{w} = 0 & \textrm{on}\ \Gamma.
	\end{array} \right.\label{adjointeqn}
\end{equation}

For the forward operator $K$, we have the following results.
\begin{proposition}\label{prop:K}
	Let $K$ be defined in (\ref{forwardoperator}). Then\par
	(i) $K$ is injective, that is, $\text{Ker}(K) = \{0\}$;\par
	(ii) $K^*$ is not injective;\par
	(iii) $K^*\vert_{V_1}$ is injective;\par
	(iv) $K$ has infinitely many different singular values $\{\sigma_j\}_{j=1}^{+\infty}$ which decay to zero in exponential way.
\end{proposition}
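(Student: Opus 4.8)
The plan is to lean on the harmonicity of the real and imaginary parts together with the unique continuation principle for the Laplacian, treating (i)--(iii) by soft PDE arguments and reserving a separate, quantitative argument for the singular-value decay in (iv). For (i), I would start from $K\phi=\tilde u_2=0$ and split the Robin condition $\partial_n\tilde{\textbf{u}}+i\,\tilde{\textbf{u}}=0$ on $\Gamma_m$ into its real and imaginary parts, namely $\partial_n\tilde u_1-\tilde u_2=0$ and $\partial_n\tilde u_2+\tilde u_1=0$. With $\tilde u_2\equiv 0$ these collapse to $\tilde u_1=0$ and $\partial_n\tilde u_1=0$ on $\Gamma_m$. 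Since $\tilde u_1$ is harmonic in $\Omega$ and carries vanishing Cauchy data on the nonempty relatively open piece $\Gamma_m$, unique continuation forces $\tilde u_1\equiv 0$; reading off the condition on $\Gamma_u$ then gives $\phi=(\partial_n\tilde{\textbf{u}}+i\,\tilde{\textbf{u}})|_{\Gamma_u}=0$, so $\text{Ker}(K)=\{0\}$.

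For (iii) I would first decode $K^*v=0$. Splitting the adjoint problem (\ref{adjointeqn}) yields $-\Delta w_1=v$, $-\Delta w_2=0$ in $\Omega$ with the coupled natural conditions $\partial_n w_1=w_2$ and $\partial_n w_2=-w_1$ on $\Gamma$. Now $K^*v=(w_2+i\,w_1)|_{\Gamma_u}=0$ means $w_1=w_2=0$ on $\Gamma_u$, and the coupling gives $\partial_n w_2=-w_1=0$ on $\Gamma_u$ as well; hence $w_2$ is harmonic with vanishing Cauchy data on $\Gamma_u$, so $w_2\equiv 0$ by unique continuation. Feeding this back into the weak boundary identities on all of $\Gamma$ forces $w_1=0$ on $\Gamma$, i.e. $w_1\in H_0^1(\Omega)$, together with $-\Delta w_1=v$. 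For (iii) I would additionally use that $v\in V_1$ is weakly harmonic: after substituting $w_2\equiv 0$, the real part of the weak adjoint identity reads $\int_\Omega\nabla w_1\cdot\nabla\eta=\int_\Omega v\,\eta$ for all real $\eta\in V$; taking $\eta=v$ and using $\int_\Omega\nabla v\cdot\nabla w_1=0$ (weak harmonicity of $v$ tested against $w_1\in H_0^1$) gives $\|v\|_{0,\Omega}^2=0$, so $v=0$ and $K^*|_{V_1}$ is injective.

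For (ii) I would simply exhibit a nonzero kernel element. Pick $\psi\in C_c^\infty(\Omega)$ with $\psi\not\equiv 0$ and set $v:=-\Delta\psi\neq 0$ (a compactly supported harmonic function must vanish). Because $\psi$ and $\partial_n\psi$ vanish on $\Gamma$, the real field $\textbf{w}=\psi$ satisfies the weak form of (\ref{adjointeqn}) with this $v$, so it is the adjoint solution, and $K^*v=(0+i\,\psi)|_{\Gamma_u}=0$ since $\psi$ vanishes near $\Gamma$. Equivalently, as $R(K)\subseteq V_1$, the function $-\Delta\psi$ is $L^2(\Omega)$-orthogonal to every harmonic function and hence lies in $R(K)^{\perp}=\text{Ker}(K^*)$.

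For (iv), compactness of $K$ gives $\sigma_j\to 0$, while injectivity from (i) excludes a finite-dimensional range, so infinitely many $\sigma_j$ are positive; being positive and convergent to $0$, they necessarily take infinitely many distinct values. The exponential rate is the crux, and here the plan is to represent $K$ through a Poisson--Green kernel of the Robin problem, $K\phi(x)=\tilde u_2(x)=\mathrm{Im}\int_{\Gamma_u}G(x,y)\,\phi(y)\,ds(y)$, and to note that for fixed $y\in\Gamma_u$ the map $x\mapsto G(x,y)$ is harmonic, hence real-analytic, in the interior of $\Omega$. Then $K^*K$ is an integral operator on $\textbf{Q}_{\Gamma_u}$ whose kernel $\int_\Omega\overline{G(x,z)}\,G(x,y)\,dx$ extends analytically in $(y,z)$ on $\Gamma_u\times\Gamma_u$, and the singular values of integral operators with uniformly real-analytic kernels are known to decay exponentially. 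The hard part is precisely this last step: making the analytic continuation of the Robin Green's function uniform on a complex neighborhood of $\Gamma_u$---delicate on a merely Lipschitz boundary---and converting controlled analyticity into a quantitative bound $\sigma_j\lesssim e^{-c j}$. A transparent cross-check is the annulus $\{r_1<|x|<r_2\}$ with $\Gamma_u=\{|x|=r_1\}$, where separation of variables diagonalizes $K$ and the $n$-th Fourier mode contributes a singular value comparable to $(r_1/r_2)^{|n|}$, exhibiting the exponential decay explicitly and matching the exponential ill-posedness established in \cite{Belg}.
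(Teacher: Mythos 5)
Your arguments for (i) and (iii) are correct and essentially the paper's own. For (i) the paper works with a difference $\tilde{\textbf{w}}=\tilde{\textbf{u}}-\tilde{\textbf{v}}$ of two preimages, but by linearity that is the same as your direct argument: kill the imaginary part, read off vanishing Cauchy data for the harmonic real part on $\Gamma_m$ from the split Robin condition, invoke Holmgren/unique continuation, and evaluate the Robin data on $\Gamma_u$. For (iii) the paper computes $\int_\Omega v^2\,dx=\int_\Gamma(\partial_n v\, w_1-\partial_n w_1\, v)\,ds=0$ via Green's identity; your version carries out the same cancellation at the weak level by testing $-\Delta w_1=v$ against $v$ and the weak harmonicity of $v$ against $w_1\in H^1_0(\Omega)$, which is if anything cleaner because it sidesteps the boundary-trace justification. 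Where you genuinely diverge is (ii): the paper reduces $K^*v=0$ to the overdetermined source problem $-\Delta w_1=v$, $w_1=\partial_n w_1=0$ on $\Gamma$, and then cites \cite[Corollary 2.4]{HCW} for the existence of infinitely many solution pairs; you instead exhibit an explicit nonzero kernel element $v=-\Delta\psi$ with $\psi\in C_c^\infty(\Omega)$, for which $\textbf{w}=\psi$ is visibly the unique weak adjoint solution and $K^*v=i\,\psi|_{\Gamma_u}=0$. This is more elementary and self-contained, and your remark that such $v$ lie in $V_1^\perp\subseteq R(K)^\perp=\text{Ker}(K^*)$ ties (ii) neatly to the non-surjectivity of $K$ noted before the proposition. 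For (iv), your point that positive singular values accumulating at $0$ must take infinitely many distinct values is a worthwhile supplement to the paper's dimension count. The exponential decay is the one place where neither proof is self-contained: the paper attributes it to \cite[Theorem 4.1]{Belg} in a single sentence, while you sketch an analytic-Green's-kernel program and candidly flag that making the analyticity uniform near a Lipschitz $\Gamma_u$ and converting it into $\sigma_j\lesssim e^{-cj}$ is the unproved crux. Since you also fall back on \cite{Belg}, you are at the paper's level of rigor here; just note that transferring Belgacem's spectral statement, which is formulated for a different parametrization of the Cauchy problem, to this particular CCBM operator $K$ is itself a step that both you and the paper leave implicit.
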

\begin{proof}
	(i) Take any $ \tilde{u}_2, \tilde{v}_2\in R(K)\subset V_1$, and denote using $\phi_1=\varphi_1+i\,t_1, \phi_2=\varphi_2+i\,t_2\in \textbf{Q}_{\Gamma_u}$ the corresponding inverse images, that is,
	\[
	K \phi_1 = \tilde{u}_2, K \phi_2 = \tilde{v}_2.
	\]
	Let $\tilde{\textbf{u}}=\tilde{u}_1+i\,\tilde{u}_2, \tilde{\textbf{v}}=\tilde{v}_1+i\,\tilde{v}_2\in \textbf{V}$ be the weak solutions of the problems (\ref{forwardbvp}), with $\phi$ being replaced by $\phi_1$ and $\phi_2$, respectively. Set $\tilde{\textbf{w}}=\tilde{\textbf{u}}-\tilde{\textbf{v}}=\tilde{w}_1+i\,\tilde{w}_2$. It then holds that
	\[
	\left\{
	\begin{array}{ll}
		- \Delta \tilde{w}_1 = 0 & \textrm{in}\ \Omega,\\
		\partial_n \tilde{w}_1 -\tilde{w}_2 = 0 & \textrm{on}\ \Gamma_m, \\
		\partial_n \tilde{w}_1 -\tilde{w}_2 = \varphi_1-\varphi_2 & \textrm{on}\ \Gamma_u,
	\end{array} \right.
	\]
	and
	\[
	\left\{
	\begin{array}{ll}
		- \Delta \tilde{w}_2 = 0 & \textrm{in}\ \Omega,\\
		\partial_n \tilde{w}_2 +\tilde{w}_1 = 0 & \textrm{on}\ \Gamma_m, \\
		\partial_n \tilde{w}_2 +\tilde{w}_1 = t_1-t_2 & \textrm{on}\ \Gamma_u.
	\end{array} \right.
	\]
	If $\tilde{u}_2=\tilde{v}_2$, then $\tilde{w}_2=0$ in $\Omega$ and thus $\tilde{w}_2=\partial_n \tilde{w}_2=0$ on both $\Gamma_m$ and $\Gamma_u$. Insert these into the BVPs above to give
	\[
	\left\{
	\begin{array}{ll}
		- \Delta \tilde{w}_1 = 0 & \textrm{in}\ \Omega,\\
		\partial_n \tilde{w}_1 = 0 & \textrm{on}\ \Gamma_m, \\
		\tilde{w}_1 = 0 & \textrm{on}\ \Gamma_m.
	\end{array} \right.
	\]
	By using Holmgren's uniqueness theorem \cite{DL,Isakov98}, we have $\tilde{w}_1=0$ in $\Omega$, and thus both $\tilde{w}_1=0$ and $\partial_n \tilde{w}_1=0$ on $\Gamma_u$. Therefore, on $\Gamma_u$,
	\[
	\varphi_1-\varphi_2= \partial_n \tilde{w}_1 -\tilde{w}_2=0 ,\quad t_1-t_2=\partial_n \tilde{w}_2 +\tilde{w}_1=0
	\]
	or $\phi_1=\phi_2$ which means $K$ is injective and thus $\text{Ker}(K) = \{0\}$.\par
	
	(ii) Let $K^*v=(w_2+i\, w_1)\vert_{\Gamma_u}=0$ with $\textbf{w}=w_1+i\,w_2\in \textbf{V}$ being the weak solution of the problem
	\[
	\left\{
	\begin{array}{ll}
		- \Delta \textbf{w} = v & \textrm{in}\ \Omega,\\
		\partial_n \textbf{w} + i\,\textbf{w} = 0 & \textrm{on}\ \Gamma.
	\end{array} \right.
	\]
	Then
	\[
	\left\{
	\begin{array}{ll}
		- \Delta w_1 = v & \textrm{in}\ \Omega,\\
		\partial_n w_1 -w_2 = 0 & \textrm{on}\ \Gamma,
	\end{array} \right.
	\]
	and
	\[
	\left\{
	\begin{array}{ll}
		- \Delta w_2 = 0 & \textrm{in}\ \Omega,\\
		\partial_n w_2 +w_1 = 0 & \textrm{on}\ \Gamma.
	\end{array} \right.
	\]
	Since $w_1=w_2=0$ on $\Gamma_u$, it holds that $\partial_n w_2 =-w_1 = 0$ on $\Gamma_u$, that is, both $w_2=0$ and $\partial_n w_2 =0 $ on $\Gamma_u$. Using Holmgren's uniqueness theorem again, we obtain $w_2=0$ in $\Omega$, which further gives $w_2=\partial_n w_2=0$ on $\Gamma_m$. Therefore, $w_1$ solves the problem:
	\begin{equation}
		\left\{
		\begin{array}{ll}
			- \Delta w_1 = v & \textrm{in}\ \Omega,\\
			w_1  = \partial_n w_1=0 & \textrm{on}\ \Gamma, \\
		\end{array} \right.\label{isp}
	\end{equation}
	which turns out to be an inverse source problem of finding $v\in Q$ from homogeneous Cauchy data. By using \cite[Corollary 2.4]{HCW}, there exist infinitely many pairs of $(w_1, v)\in H^2(\Omega)\times Q$ which solve the problem (\ref{isp}), that is, there has infinitely many $v\in Q$ such that $K^* v=0$, and thus $K^*$ is not injective. \par
	
	(iii) If we further assume $v\in V_1$ satisfying $K^*v=0$, then from (\ref{isp})  and the definition of $V_1$ in (\ref{V1}), there holds
	\[
	\int_\Omega v^2dx = \int_\Omega -\Delta w_1 v dx = \int_\Gamma (\partial_n v w_1  - \partial_n w_1  v)ds=0
	\]
	which gives $v=0$, i.e. $K^*$ is injective when it is restricted over $V_1$.
	
	(iv) Since $\textbf{Q}_{\Gamma_u}$ is a separable space of infinite dimension, and $K$ is injective, the range $R(K)$ is also a separable space of infinite dimension. Therefore, $K$ has infinitely many different singular values $\sigma_j, j=1,2,\cdots,\infty$. This exponentially decaying behavior derives from the severe ill-posedness of the Cauchy problem \cite[Theorem 4.1]{Belg}, and the proof is completed. \qed
\end{proof}

In practice we have only noisy data $(\Phi^\delta, T^\delta)$ at hand. Assume
\[
\|\Phi^\delta - \Phi\|_{0,\Gamma_m} \leq \delta, \quad
\|T^\delta - T\|_{0,\Gamma_m} \leq \delta.
\]
Then the operator equation (\ref{operatoreqn}) is modified to
\begin{equation}
	K \phi = f^\delta,\label{operatoreqnnoise}
\end{equation}
where $f^\delta:= -\hat{u}^\delta_2$ and $\hat{u}^\delta_2$ is the imaginary part of $\hat{\textbf{u}}^\delta$ which is the weak solution of (\ref{databvp}), with $(\Phi, T)$ being replaced by $(\Phi^\delta, T^\delta) $. It is easy to verify that
\begin{equation}\label{right-handsidebound}	
	\|f^\delta-f\|_{0,\Omega}<\|f^\delta-f\|_{1,\Omega}=\|\hat{u}^\delta_2-\hat{u}_2\|_{1,\Omega}\leq C_f\,\delta,
\end{equation}
where $C_f$ is a positive constant that depends only on the geometry of domain.

As a result of Proposition \ref{prop:K}(i), we reach the solution uniqueness as follows.
\begin{corollary}\label{coro:uniqueness1}
	The operator equation (\ref{operatoreqnnoise}) has, at most, one solution.
\end{corollary}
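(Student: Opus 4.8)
The plan is to derive the uniqueness directly from the injectivity of $K$ proved in Proposition \ref{prop:K}(i), using only the fact that $K$ is a linear operator. Suppose the equation (\ref{operatoreqnnoise}) admits two solutions $\phi_1, \phi_2 \in \textbf{Q}_{\Gamma_u}$, so that $K\phi_1 = f^\delta$ and $K\phi_2 = f^\delta$. Subtracting these two identities and invoking the linearity of $K$ yields $K(\phi_1 - \phi_2) = 0$, that is, $\phi_1 - \phi_2 \in \text{Ker}(K)$. By Proposition \ref{prop:K}(i) we have $\text{Ker}(K) = \{0\}$, whence $\phi_1 - \phi_2 = 0$, i.e. $\phi_1 = \phi_2$. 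This establishes that (\ref{operatoreqnnoise}) has at most one solution.

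There is no genuine obstacle in this argument; the corollary is an immediate consequence of injectivity, and all the real work has already been carried out in establishing Proposition \ref{prop:K}(i), whose proof relies on Holmgren's uniqueness theorem. The only point worth emphasizing in the write-up is the phrasing ``at most one'' rather than ``exactly one'': because the data $f^\delta$ is contaminated by noise, it need not lie in the range $R(K)$. Indeed, we noted that $R(K) \subset V_1$ and that $K$ is not surjective, so existence of a solution to (\ref{operatoreqnnoise}) is not guaranteed, and only uniqueness can be asserted. This gap between uniqueness and existence is precisely what motivates the regularized (subspace-projection) reconstruction developed in the subsequent sections.
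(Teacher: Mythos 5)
Your argument is correct and is exactly the paper's: the corollary is stated there as an immediate consequence of the injectivity of $K$ from Proposition \ref{prop:K}(i), which is precisely the linearity-plus-trivial-kernel reasoning you give. Your remark on why only ``at most one'' (rather than existence) can be claimed is also consistent with the paper's subsequent discussion of $f^\delta\notin R(K)$.
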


It is well known that in the case the Cauchy data are incompatible, the Cauchy problem lose the existence of the solution. In the following, based on the reformulated problem (\ref{operatoreqnnoise}), we give some existence results. To the end, we introduce the following form: find $\phi\in\textbf{Q}_{\Gamma_u}$ such that
\begin{equation}\label{innerequation}
	(K\phi,K\phi')_{0,\Omega}=(f^\delta,K\phi')_{0,\Omega},\quad \forall\  \phi'\in\textbf{Q}_{\Gamma_u}. 
\end{equation}
Let $K^*K:=\mathcal{A}: \textbf{Q}_{\Gamma_u}\to\textbf{Q}_{\Gamma_u}$. An equivalent statement of (\ref{innerequation}) is to find $\phi\in\textbf{Q}_{\Gamma_u}$ satisfying
\begin{eqnarray}\label{normalequation}
	\mathcal{A}\phi = K^*f^\delta\qquad \textrm{in}\ \textbf{Q}_{\Gamma_u},
\end{eqnarray}
which is the normal equation of the problem (\ref{operatoreqnnoise}). The nature of the problem (\ref{innerequation}) or (\ref{normalequation}) is tightly connected to the properties of the operate $\mathcal{A}$ (i.e. $K^*K$). Therefore, in addition to the properties about $K$ and $K^*$ described in proposition \ref{prop:K}, we also discuss the properties of $K^*K$ below.
\begin{lemma}\label{injective}
	We have that the linear operator $\mathcal{A}: \textbf{Q}_{\Gamma_u}\to\textbf{Q}_{\Gamma_u}$ is one to one.
\end{lemma}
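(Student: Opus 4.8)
The statement to prove is that $\mathcal{A} = K^*K$ is one-to-one (injective) as an operator on $\textbf{Q}_{\Gamma_u}$.

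The plan is to exploit the fact that for operators of the form $K^*K$, injectivity reduces directly to the injectivity of $K$ itself, which has already been established in Proposition \ref{prop:K}(i). Concretely, I would suppose $\mathcal{A}\phi = K^*K\phi = 0$ for some $\phi \in \textbf{Q}_{\Gamma_u}$ and aim to conclude $\phi = 0$.

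First I would take the inner product of $K^*K\phi = 0$ with $\phi$ in $\textbf{Q}_{\Gamma_u}$ and invoke the adjoint relation, writing
\[
0 = (K^*K\phi, \phi)_{\textbf{Q}_{\Gamma_u}} = (K\phi, K\phi)_{0,\Omega} = \|K\phi\|_{0,\Omega}^2.
\]
This forces $K\phi = 0$ in $Q$. Then I would apply Proposition \ref{prop:K}(i), which states that $\text{Ker}(K) = \{0\}$, to conclude immediately that $\phi = 0$. Since the only element of the kernel of $\mathcal{A}$ is the zero element, $\mathcal{A}$ is one-to-one. This argument is standard and the linearity of $\mathcal{A}$ is clear from the linearity of $K$ and $K^*$.

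The only point requiring a small amount of care — and what I would regard as the main (albeit minor) obstacle — is the justification of the adjoint identity $(K^*K\phi, \phi)_{\textbf{Q}_{\Gamma_u}} = (K\phi, K\phi)_{0,\Omega}$. This rests on $K^*$ being the genuine Hilbert-space adjoint of $K$ between the complex spaces $\textbf{Q}_{\Gamma_u}$ and $Q$, which is how $K^*$ was introduced in the excerpt via the adjoint problem (\ref{adjointeqn}); one should make sure the complex inner products are handled consistently (conjugate-linearity in the correct slot) so that the real, nonnegative quantity $\|K\phi\|_{0,\Omega}^2$ emerges. Beyond that, the result is an immediate consequence of the already-proven injectivity of $K$, so no further heavy machinery (such as Holmgren's theorem, already used for Proposition \ref{prop:K}) is needed here.
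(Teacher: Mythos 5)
Your proof is correct, but it takes a genuinely different route from the paper's. You use the abstract Hilbert-space identity $(\mathcal{A}\phi,\phi)_{\textbf{Q}_{\Gamma_u}}=(K\phi,K\phi)_{0,\Omega}=\|K\phi\|_{0,\Omega}^2$ to reduce $\mathcal{A}\phi=0$ to $K\phi=0$ in one line, and then invoke Proposition \ref{prop:K}(i). The paper instead argues entirely at the PDE level: it writes $K\phi=u_2$ and $K^*K\phi=w_2+i\,w_1\vert_{\Gamma_u}$ via the forward and adjoint boundary value problems, uses Holmgren's uniqueness theorem (as in the proof of Proposition \ref{prop:K}(ii)) to deduce from $w_1=w_2=0$ on $\Gamma_u$ that $w_2\equiv 0$ and that $w_1$ has vanishing Cauchy data on all of $\Gamma$, and then an integration by parts, $\int_\Omega u_2^2\,dx=\int_\Omega(-\Delta w_1)u_2\,dx=0$, to conclude $K\phi=u_2=0$; the final step (injectivity of $K$) is the same. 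In effect the paper re-derives, by hand and for this specific $K$, the implication $K^*K\phi=0\Rightarrow K\phi=0$ that your argument gets for free from the adjoint relation. Your route is shorter and works for any bounded $K$ with injective $K$, but it leans on the operator defined through (\ref{adjointeqn}) being the genuine adjoint of $K$ with respect to the (componentwise real) inner products — a fact the paper asserts but does not verify; the paper's PDE computation is self-contained in that it only uses the explicit boundary value problems defining $K$ and $K^*$. Your caveat about handling the inner products consistently is the right one to flag, and it is harmless here since the paper's ``complex'' inner products are sums of real $L^2$ pairings of the components, so $(K^*K\phi,\phi)_{\textbf{Q}_{\Gamma_u}}$ is indeed the real nonnegative quantity $\|K\phi\|_{0,\Omega}^2$.
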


The proof of Lemma \ref{injective} can be found in Appendix A.1. Obviously, $\mathcal{A}$ is a self-adjoint operator. From Lemma \ref{injective}, the kernel $\text{Ker}(\mathcal{A})$ is reduced to the trivial space $\{0\}$. Then, due to $\overline{R(\mathcal{A})}=\text{Ker}(\mathcal{A})^\perp$, the closure of its range coincides with the whole space $\textbf{Q}_{\Gamma_u}$. We summarize them as follows.
\begin{lemma}\label{kerrange}
	There holds
	\begin{eqnarray*}
		\text{Ker}(\mathcal{A}) = \{0\}\quad \textrm{and}\quad \overline{R(\mathcal{A})}=\textbf{Q}_{\Gamma_u}.
	\end{eqnarray*}
\end{lemma}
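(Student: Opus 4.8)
The plan is to obtain both assertions directly from Lemma \ref{injective} together with the self-adjointness of $\mathcal{A}$, so that no new PDE analysis is required; all the analytic work has already been absorbed into Lemma \ref{injective}. First I would record that $\mathcal{A}=K^*K$ is self-adjoint. Indeed, for any $\phi,\psi\in\textbf{Q}_{\Gamma_u}$, using the definition of the adjoint $K^*$ of the bounded operator $K:\textbf{Q}_{\Gamma_u}\to Q$,
\[
(\mathcal{A}\phi,\psi)_{\textbf{Q}_{\Gamma_u}}=(K^*K\phi,\psi)_{\textbf{Q}_{\Gamma_u}}=(K\phi,K\psi)_{0,\Omega}=(\phi,K^*K\psi)_{\textbf{Q}_{\Gamma_u}}=(\phi,\mathcal{A}\psi)_{\textbf{Q}_{\Gamma_u}},
\]
so that $\mathcal{A}^*=\mathcal{A}$.

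The kernel statement is then immediate. Since Lemma \ref{injective} asserts that $\mathcal{A}$ is one-to-one, its null space can only be the trivial subspace, i.e. $\text{Ker}(\mathcal{A})=\{0\}$. For the range, I would invoke the standard orthogonal-complement identity valid for any bounded linear operator $T$ on a Hilbert space, namely $\overline{R(T)}=\text{Ker}(T^*)^\perp$. Applying this with $T=\mathcal{A}$ and using the self-adjointness just established gives
\[
\overline{R(\mathcal{A})}=\text{Ker}(\mathcal{A}^*)^\perp=\text{Ker}(\mathcal{A})^\perp=\{0\}^\perp=\textbf{Q}_{\Gamma_u},
\]
which is exactly the second claim.

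There is no genuine obstacle in this argument: the only points demanding attention are the verification of self-adjointness and the correct application of the closed-range orthogonality relation $\overline{R(\mathcal{A})}=\text{Ker}(\mathcal{A})^\perp$, both of which are routine once $K$ is known to be a bounded linear operator between the Hilbert spaces $\textbf{Q}_{\Gamma_u}$ and $Q$. The substantive content resides entirely in Lemma \ref{injective}, whose proof rests on Holmgren's uniqueness theorem as already exploited in Proposition \ref{prop:K}; accordingly, the present lemma is best viewed as a short corollary packaging the injectivity of $\mathcal{A}$ and the density of its range into a single statement for later use in the GKB analysis.
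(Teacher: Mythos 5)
Your argument matches the paper's own reasoning exactly: the paper derives the lemma from the self-adjointness of $\mathcal{A}=K^*K$, the injectivity supplied by Lemma \ref{injective}, and the identity $\overline{R(\mathcal{A})}=\text{Ker}(\mathcal{A})^\perp$. Your proposal is correct and simply spells out the routine verification of self-adjointness that the paper states as obvious.
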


\begin{lemma}\label{equivalence1}
	Problem (\ref{operatoreqnnoise}) is equivalent to Problem (\ref{normalequation}).
\end{lemma}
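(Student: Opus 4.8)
The plan is to read the asserted equivalence as the statement that (\ref{operatoreqnnoise}) and (\ref{normalequation}) share exactly the same solution set in $\textbf{Q}_{\Gamma_u}$, and to establish it by proving two implications. The forward implication is immediate: if $\phi$ solves $K\phi=f^\delta$, then applying the bounded linear operator $K^*$ to both sides yields $K^*K\phi=K^*f^\delta$, i.e. $\mathcal{A}\phi=K^*f^\delta$, so $\phi$ solves (\ref{normalequation}). The whole content therefore lies in the reverse implication, and the proof hinges on exploiting Proposition \ref{prop:K}(iii) together with the harmonic structure of the data.

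For the reverse direction, I would suppose that $\phi$ satisfies $\mathcal{A}\phi=K^*f^\delta$ and rewrite this as $K^*(K\phi-f^\delta)=0$, so that it suffices to show $K\phi-f^\delta=0$. The key observation to isolate is that the residual $K\phi-f^\delta$ lies in the subspace $V_1$ of harmonic functions defined in (\ref{V1}). Indeed, by the definition (\ref{forwardoperator}) we have $K\phi=\tilde{u}_2$ with $\Delta\tilde{u}_2=0$, so $R(K)\subset V_1$; and since $f^\delta=-\hat{u}^\delta_2$ is the imaginary part of the solution $\hat{\textbf{u}}^\delta$ of (\ref{databvp}), which satisfies $-\Delta\hat{\textbf{u}}^\delta=0$, the function $f^\delta$ is itself harmonic, hence $f^\delta\in V_1$. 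Consequently $K\phi-f^\delta\in V_1$, and because $K^*\vert_{V_1}$ is injective by Proposition \ref{prop:K}(iii), the identity $K^*(K\phi-f^\delta)=0$ forces $K\phi-f^\delta=0$, i.e. $K\phi=f^\delta$. This yields the reverse inclusion and completes the equivalence.

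The step I expect to be the crux, and the reason the statement is not merely the routine passage to a normal equation, is precisely this reverse implication. For a generic right-hand side, $\mathcal{A}\phi=K^*f^\delta$ is equivalent only to the least-squares problem $\min_{\phi}\|K\phi-f^\delta\|_{0,\Omega}$, whose minimizer need not satisfy $K\phi=f^\delta$ unless $f^\delta\in R(K)$; this is exactly the point at which $\text{Ker}(K^*)=R(K)^\perp$ would otherwise obstruct the argument, since $K^*$ annihilates the whole orthogonal complement of $R(K)$. What rescues genuine equivalence here is the two-fold structural fact that both the range $R(K)$ and the specific datum $f^\delta$ are confined to the harmonic subspace $V_1$, on which $K^*$ is injective. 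I would therefore be careful to state and use the harmonicity of $f^\delta$ explicitly, as it is the non-obvious ingredient that upgrades the restricted injectivity of Proposition \ref{prop:K}(iii) into a tool annihilating the entire residual rather than only its component inside $R(K)$.
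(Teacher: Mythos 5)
Your proof is correct and follows essentially the same route as the paper: both reduce the matter to showing $K^*(K\phi-f^\delta)=0$ forces $K\phi=f^\delta$, by observing that $K\phi$ and $f^\delta=-\hat{u}^\delta_2$ both lie in the harmonic subspace $V_1$ and invoking the injectivity of $K^*\vert_{V_1}$ from Proposition \ref{prop:K}(iii). Your write-up is in fact slightly more explicit than the paper's, which merely asserts $K\phi, f^\delta\in V_1$ without spelling out the harmonicity of $f^\delta$.
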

\begin{proof}
	We only need to show that $K^*f^\delta\in R(\mathcal{A})$ implies $f^\delta\in R(K)$. To the end, let $K^*f^\delta\in R(\mathcal{A})$. Then there exists a unique solution $\phi$ to the normal Eq. (\ref{normalequation}), that is,
	\begin{eqnarray*}
		K^*K\phi = K^*f^\delta,\quad \text{or equivalently, }\quad K^*(K\phi -f^\delta)=0.
	\end{eqnarray*}
	From Proposition \ref{prop:K}(iii), $K^*$ is injective when restricted to $V_1$. Consequently, by noticing $K\phi, f^\delta\in V_1$, we obtain
	\begin{eqnarray*}
		K\phi -f^\delta=0,
	\end{eqnarray*}
	which gives $f^\delta\in R(K)$.\qed
\end{proof}

If $K^*f^\delta\notin R(\mathcal{A})$, the identity $\mathcal{A}\phi = K^*f^\delta$ fails to have a solution. Therefore, according to \cite{BE}, we can also introduce the definitions of the consistent-pseudo-solutions and the incompatibility measure for the problem (\ref{innerequation}). A sequence $(\phi_n)_n\subset\textbf{Q}_{\Gamma_u}$ is said to be consistent with (\ref{innerequation}) if
\begin{equation}\label{consistentpseudosolution}
	\hspace{-1em}
	\lim_{n\to\infty}\sup_{\phi'\in\textbf{Q}_{\Gamma_u}}\frac{(K\phi_n,K\phi')_{0,\Omega}-(f^\delta,K\phi')_{0,\Omega}}{\|\phi'\|_{\textbf{Q}_{\Gamma_u}}}=\lim_{n\to\infty}\|\mathcal{A}\phi_n-K^*f^\delta\|_{\textbf{Q}_{\Gamma_u}}=0,
\end{equation}
and the incompatibility measure of Eq. (\ref{innerequation}) defined by
\begin{eqnarray*}
	v_S = \inf_{\phi\in\textbf{Q}_{\Gamma_u}}\sup_{\phi'\in\textbf{Q}_{\Gamma_u}}\frac{(K\phi,K\phi')_{0,\Omega}-(f^\delta,K\phi')_{0,\Omega}}{\|\phi'\|_{\textbf{Q}_{\Gamma_u}}}=\inf_{\phi\in\textbf{Q}_{\Gamma_u}}\|\mathcal{A}\phi-K^*f^\delta\|_{\textbf{Q}_{\Gamma_u}}.
\end{eqnarray*} 

Now we get a result on the existence of consistent-pseudo-solution to Eq. (\ref{innerequation}), whether the data are compatible or not.
\begin{proposition}\label{cpsexistence}
	For any $K^*f^\delta\in\textbf{Q}_{\Gamma_u}$, there exists at least a consistent-pseudo-solution $(\phi_n)_n\subset\textbf{Q}_{\Gamma_u}$ of Eq. (\ref{innerequation}). Furthermore, the following alternative holds
	\begin{enumerate}
		\item[(i)] all the consistent-pseudo-solutions $(\phi_n)_n$ blow up in $\textbf{Q}_{\Gamma_u}$, that is
		\begin{eqnarray*}
			\lim_{n\to\infty}\|\phi_n\|_{\textbf{Q}_{\Gamma_u}}=+\infty
		\end{eqnarray*}
		and then $f^\delta\notin R(K)$.
		
		\item[(ii)] at least one consistent-pseudo-solution $(\phi_n)_n$ is bounded in $\textbf{Q}_{\Gamma_u}$, then Eq. (\ref{innerequation}) has a unique solution $\phi\in\textbf{Q}_{\Gamma_u}$ and $f^\delta\in R(K)$.
	\end{enumerate}
\end{proposition}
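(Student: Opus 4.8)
The plan is to reduce the whole statement to the single dichotomy of whether $K^*f^\delta$ belongs to the range $R(\mathcal{A})$ or only to its closure, exploiting the two structural facts already secured in Lemma \ref{kerrange}, namely $\text{Ker}(\mathcal{A})=\{0\}$ and $\overline{R(\mathcal{A})}=\textbf{Q}_{\Gamma_u}$, together with the equivalence in Lemma \ref{equivalence1}.

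First I would settle existence of a consistent-pseudo-solution for \emph{every} right-hand side. Because $\overline{R(\mathcal{A})}=\textbf{Q}_{\Gamma_u}$, the element $K^*f^\delta$ lies in the closure of $R(\mathcal{A})$, so there is a sequence $g_n\in R(\mathcal{A})$ with $g_n\to K^*f^\delta$ in $\textbf{Q}_{\Gamma_u}$. Choosing for each $n$ a preimage $\phi_n$ with $\mathcal{A}\phi_n=g_n$ yields $\|\mathcal{A}\phi_n-K^*f^\delta\|_{\textbf{Q}_{\Gamma_u}}\to0$, i.e. $(\phi_n)_n$ is consistent with (\ref{innerequation}) in the sense of (\ref{consistentpseudosolution}). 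In particular the incompatibility measure $v_S$ vanishes identically, so the issue is never one of existence but only of boundedness.

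The core of the argument is the equivalence: a bounded consistent-pseudo-solution exists if and only if $K^*f^\delta\in R(\mathcal{A})$. The easy direction is immediate — if $\mathcal{A}\phi=K^*f^\delta$ has a solution $\phi$, the constant sequence $\phi_n\equiv\phi$ is a bounded consistent-pseudo-solution. The converse is where I expect the main work to lie. Suppose $(\phi_n)_n$ is a bounded consistent-pseudo-solution. Since $\textbf{Q}_{\Gamma_u}$ is a Hilbert space, bounded sequences are weakly sequentially precompact, so along a subsequence $\phi_{n_k}\rightharpoonup\phi$. The bounded linear operator $\mathcal{A}$ is weak-to-weak continuous, whence $\mathcal{A}\phi_{n_k}\rightharpoonup\mathcal{A}\phi$; but $\mathcal{A}\phi_{n_k}\to K^*f^\delta$ strongly, hence weakly, and uniqueness of weak limits forces $\mathcal{A}\phi=K^*f^\delta$, i.e. $K^*f^\delta\in R(\mathcal{A})$. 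This weak-compactness step is the crux of the proof.

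With the equivalence in hand the alternative closes quickly. If $K^*f^\delta\notin R(\mathcal{A})$ then no consistent-pseudo-solution can be bounded, so every one of them satisfies $\|\phi_n\|_{\textbf{Q}_{\Gamma_u}}\to+\infty$; moreover $K^*f^\delta\notin R(\mathcal{A})$ is equivalent, through Lemma \ref{equivalence1} and its trivial converse $f^\delta=K\phi\Rightarrow K^*f^\delta=\mathcal{A}\phi$, to $f^\delta\notin R(K)$, which is case (i). If instead some consistent-pseudo-solution is bounded, then $K^*f^\delta\in R(\mathcal{A})$, so (\ref{normalequation}) — equivalently (\ref{innerequation}) — is solvable; the solution is unique because $\text{Ker}(\mathcal{A})=\{0\}$ by Lemma \ref{kerrange}, and Lemma \ref{equivalence1} then yields $f^\delta\in R(K)$, giving case (ii).
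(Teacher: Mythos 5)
Your proposal is correct and follows essentially the same route as the paper: existence via density of $R(\mathcal{A})$ from Lemma \ref{kerrange}, the constant sequence for the easy direction of the alternative, and weak sequential compactness of bounded sequences plus continuity of $\mathcal{A}$ for the converse, closed off by Lemma \ref{equivalence1}. Your phrasing of the key step (weak-to-weak continuity of $\mathcal{A}$ and uniqueness of weak limits) is in fact a cleaner rendering of the paper's ``passing to the limit in (\ref{difference})'' argument, but it is the same idea.
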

\begin{proof}
	Let $K^*f^\delta\in\textbf{Q}_{\Gamma_u}$. Then, by Lemma \ref{kerrange}, there exists $(g_n)_{n\in \mathbb{N}}\subset R(\mathcal{A})$ converging to $K^*f^\delta$. We denote by $\phi_n$ the unique solution of Eq. (\ref{innerequation}) where $K^*f^\delta$ is changed to $g_n$. It holds that $\phi_n\in\textbf{Q}_{\Gamma_u}$, for all $n\in\mathbb{N}$, and we have that $\forall\ \phi'\in\textbf{Q}_{\Gamma_u}$,
	\begin{eqnarray}\label{difference}
		&(K\phi_n,K\phi')_{0,\Omega}-(f^\delta,K\phi')_{0,\Omega} = (g_n,\phi')_{0,\Omega}-(f^\delta,K\phi')_{0,\Omega}\\\nonumber
		&=(g_n-K^*f^\delta,\phi')_{0,\Omega} \leq\|g_n-K^*f^\delta\|_{\textbf{Q}_{\Gamma_u}}\|\phi'\|_{\textbf{Q}_{\Gamma_u}}.
	\end{eqnarray}
	Thus, we obtain that
	\begin{eqnarray*}
		\sup_{\phi'\in\textbf{Q}_{\Gamma_u}}\frac{(K\phi_n,K\phi')_{0,\Omega}-(f^\delta,K\phi')_{0,\Omega}}{\|\phi'\|_{\textbf{Q}_{\Gamma_u}}}\leq\|g_n-K^*f^\delta\|_{\textbf{Q}_{\Gamma_u}}.
	\end{eqnarray*}
	Take the limit on both sides of the above inequality, we get that $\phi_n$ satisfies Eq. (\ref{innerequation}). Hence, $(\phi_n)_n\in\textbf{Q}_{\Gamma_u}$ is a solution of (\ref{consistentpseudosolution}) (or a consistent-pseudo-solution of (\ref{innerequation})). 
	
	Let us illustrate point (i) with proof by contradiction. Assume  $f^\delta\in R(K)$. Then $K^*f^\delta\in R(\mathcal{A})$ and Eq. (\ref{innerequation}) admits a solution. Let $\phi$ be the solution of Eq. (\ref{innerequation}). Then we construct a sequence of $\phi_n\equiv\phi$ where is plainly the consistent-pseudo-solution of Eq. (\ref{innerequation}) and bounded. This is a contradiction that all the consistent-pseudo-solutions $(\phi_n)_n$ blow up in $\textbf{Q}_{\Gamma_u}$. 
	
	Here's the point (ii). Assume that $(\phi_n)_n$ is the consistent-pseudo-solution of Eq. (\ref{innerequation}) and bounded in $\textbf{Q}_{\Gamma_u}$. Then, we can extract a subsequence from it, also known as $(\phi_n)_n$, which converges weakly to $\phi\in\textbf{Q}_{\Gamma_u}$. Denote by $g_n=\mathcal{A}\phi_n$. Then from the definition (\ref{consistentpseudosolution}), there holds:
	\[
	\lim_{n\to\infty}\|g_n-K^*f^\delta\|_{\textbf{Q}_{\Gamma_u}}=0,
	\]
	which gives
	\[
	(g_n-K^*f^\delta,\phi')_{0,\Omega}=0,\quad \forall\ \phi'\in\textbf{Q}_{\Gamma_u}.
	\]
	Passing to the limit in (\ref{difference}) and thanks to the continuity of the operator $K$ and $(\cdot,K\phi')$, $\phi$ is the unique solution of Eq. (\ref{innerequation}) and thus $K^*f^\delta\in R(\mathcal{A})$. According to Lemma \ref{equivalence1}, we have $f^\delta\in R(K)$.  \qed
\end{proof}

Proposition \ref{cpsexistence} indicates that although the consistent-pseudo-solution of Eq. (\ref{innerequation}) always exists for any measurement data $f^\delta$, the solution to it may not exist when $f^\delta$ is not in range. Note that practically,  the approximate solution is usually searched in a finite dimensional space. Therefore, in the following, we will discuss the solvability of Eq. (\ref{innerequation}) in the finite dimensional space. By the Ritz-Galerkin procedure, Eq. (\ref{innerequation}) over the finite-dimensional subspace $M_m\subset \textbf{Q}_{\Gamma_u}$ with the dimensionality $m$ can be formulated: find $\phi_m\in M_m$ such that
\begin{eqnarray}\label{finitedimproblem}
	(K\phi_m,K\phi')_{0,\Omega}=(f^\delta,K\phi')_{0,\Omega},\quad \forall\ \phi'\in M_m.
\end{eqnarray}
Obviously, we have that
\begin{eqnarray*}
	(K\phi',K\phi')_{0,\Omega}\geq\alpha_m\|\phi'\|^2_{\textbf{Q}_{\Gamma_u}},\quad \forall\ \phi'\in M_m.
\end{eqnarray*}
($\alpha_m$ decays to zero for growing the dimensionality of $M$). As a result, by Lax-Milgram lemma, problem (\ref{finitedimproblem}) admits one solution $\phi_m\in M_m$ for fixed $m$. Moreover, about the approximate solution sequence $\{\phi_m\}_{m=1}^\infty$, it is easy to prove that the following properties hold.
\begin{proposition}\label{converge1}
	Denote by $\phi_m\subset M_m$ the unique solution to the problem  (\ref{finitedimproblem}), we have:
	\begin{itemize}
		\item[(i)]  $\{\phi_m\}_{m=1}^\infty$ is a consistent-pseudo-solution  of Eq. (\ref{innerequation});
		\item[(ii)] if $f^\delta\notin R(K)$,  $\{\phi_m\}_{m=1}^\infty$ blows up in $\textbf{Q}_{\Gamma_u}$;
		\item[(iii)]  if $f^\delta\in R(K)$ and $M_m\to \textbf{Q}_{\Gamma_u}$ as $m\to \infty$, 
		\[
		\phi_m\to \phi^\delta\quad in\ \textbf{Q}_{\Gamma_u},
		\]
		where $\phi^\delta$ is the unique solution to Eq. (\ref{innerequation}).
	\end{itemize}
\end{proposition}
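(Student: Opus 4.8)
The plan is to dispatch the three claims in sequence, reducing each to the structural facts already in hand: $K$ (hence $K^{*}$ and $\mathcal{A}=K^{*}K$) is compact, $K$ is injective with $K^{*}|_{V_1}$ injective (Proposition \ref{prop:K}), $\mathcal{A}$ has trivial kernel and dense range (Lemma \ref{kerrange}), and the solvability alternative of Proposition \ref{cpsexistence} holds. Throughout I use the standing assumption implicit in the discretization, namely that $\{M_m\}$ is a nested family with $\overline{\bigcup_m M_m}=\textbf{Q}_{\Gamma_u}$ (this is exactly the hypothesis ``$M_m\to\textbf{Q}_{\Gamma_u}$''). Writing $r_m:=f^\delta-K\phi_m$, the Galerkin identity (\ref{finitedimproblem}) says precisely that $r_m$ is $Q$-orthogonal to $K(M_m)$, equivalently $K^{*}r_m\perp M_m$, together with the algebraic relation $\mathcal{A}\phi_m-K^{*}f^\delta=-K^{*}r_m$.

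For (i) I would first test (\ref{finitedimproblem}) with $\phi'=\phi_m$ to get $\|K\phi_m\|_{0,\Omega}\le\|f^\delta\|_{0,\Omega}$, so that $\{r_m\}$ is bounded in $Q$. Since $K$ is compact, so is $K^{*}$, whence $\{K^{*}r_m\}$ is precompact in $\textbf{Q}_{\Gamma_u}$. Any strong limit $g$ of a subsequence inherits $g\perp M_j$ for every fixed $j$ (because $K^{*}r_m\perp M_m\supseteq M_j$ once $m\ge j$), hence $g\perp\overline{\bigcup_m M_m}=\textbf{Q}_{\Gamma_u}$ and $g=0$. As the sequence is precompact with every subsequential limit vanishing, $K^{*}r_m\to0$, i.e. $\|\mathcal{A}\phi_m-K^{*}f^\delta\|_{\textbf{Q}_{\Gamma_u}}\to0$, which is exactly the defining relation (\ref{consistentpseudosolution}). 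Claim (ii) then follows from (i) and Proposition \ref{cpsexistence}: any subsequence of the consistent-pseudo-solution $\{\phi_m\}$ is again one, so if $\{\phi_m\}$ did not blow up a bounded subsequence would exist, and Proposition \ref{cpsexistence}(ii) would force $f^\delta\in R(K)$, contradicting the hypothesis; thus no bounded subsequence exists, which is the statement $\|\phi_m\|_{\textbf{Q}_{\Gamma_u}}\to+\infty$.

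For (iii), the assumption $f^\delta\in R(K)$ together with the injectivity of $K$ (Proposition \ref{prop:K}(i)) yields a unique $\phi^\delta$ with $K\phi^\delta=f^\delta$, which by Lemma \ref{equivalence1} is the unique solution of (\ref{innerequation}). Galerkin orthogonality in the semi-inner product $(K\cdot,K\cdot)_{0,\Omega}$ makes $\phi_m$ the best approximation of $\phi^\delta$ in the seminorm $\|K\cdot\|_{0,\Omega}$, giving $\|K(\phi_m-\phi^\delta)\|_{0,\Omega}=\min_{\psi\in M_m}\|K(\psi-\phi^\delta)\|_{0,\Omega}\le\|K\|\min_{\psi\in M_m}\|\psi-\phi^\delta\|_{\textbf{Q}_{\Gamma_u}}\to0$ by density, so $K\phi_m\to f^\delta$ in $Q$. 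To pass from this to $\phi_m\to\phi^\delta$ in $\textbf{Q}_{\Gamma_u}$, I would establish boundedness of $\{\phi_m\}$, extract a weakly convergent subsequence $\phi_{m_k}\rightharpoonup\psi$, pass to the limit in the estimate (\ref{difference}) using $K^{*}r_m\to0$ to identify $\psi=\phi^\delta$ (so by uniqueness the whole sequence converges weakly), and finally upgrade weak to strong convergence, e.g. via $\|\phi_m\|_{\textbf{Q}_{\Gamma_u}}\to\|\phi^\delta\|_{\textbf{Q}_{\Gamma_u}}$.

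The main obstacle is exactly this last step. The least-squares Galerkin scheme controls only the seminorm $\|K\cdot\|_{0,\Omega}$, and since $K$ is compact (so $K^{-1}$ is unbounded), the convergence $K\phi_m\to f^\delta$ does not by itself yield norm convergence of $\phi_m$, nor even boundedness, for an arbitrary dense nested family. Strong convergence in $\textbf{Q}_{\Gamma_u}$ therefore cannot rest on density alone: it requires a uniform approximation property of $\{M_m\}$ relative to $K$ — precisely the feature supplied by the GKB-generated (Krylov) subspaces employed later, for which the conjugate-gradient-type convergence to the exact solution is available when $f^\delta\in R(K)$. Securing the boundedness and the weak-to-strong upgrade under the stated hypotheses is where the genuine effort in (iii) lies, whereas (i) and (ii) are immediate from compactness and the alternative of Proposition \ref{cpsexistence}.
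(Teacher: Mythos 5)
Your treatment of (i) and (ii) is correct, and it is worth noting that the paper itself offers no proof of this proposition (it is asserted with ``it is easy to prove''), so there is nothing to compare against except the surrounding results. For (i), testing (\ref{finitedimproblem}) with $\phi'=\phi_m$ gives $\|K\phi_m\|_{0,\Omega}\leq\|f^\delta\|_{0,\Omega}$, hence boundedness of $r_m=f^\delta-K\phi_m$; the compactness of $K^*$ plus the Galerkin orthogonality $K^*r_m\perp M_m$ and the nestedness/density of $\{M_m\}$ (which you rightly make explicit, since the statement of (i) omits it) force every subsequential limit of $K^*r_m$ to vanish, hence $\mathcal{A}\phi_m-K^*f^\delta\to0$. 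Part (ii) then follows from Proposition \ref{cpsexistence} exactly as you say, since the defining property (\ref{consistentpseudosolution}) passes to subsequences.

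Part (iii), however, is not proved, and the obstacle you flag is genuine rather than a technicality you could have finessed. The Galerkin identity controls only $\|K(\phi_m-\phi^\delta)\|_{0,\Omega}=\min_{\psi\in M_m}\|K(\psi-\phi^\delta)\|_{0,\Omega}\to0$, and since $K$ is compact with unbounded inverse this yields neither boundedness of $\{\phi_m\}$ in $\textbf{Q}_{\Gamma_u}$ nor, a fortiori, strong convergence. The alternative in Proposition \ref{cpsexistence} cannot rescue this: when $f^\delta\in R(K)$ it only asserts that \emph{some} consistent-pseudo-solution is bounded (e.g.\ the constant sequence $\phi_n\equiv\phi^\delta$), not that the least-squares Galerkin sequence is, and its part (ii) delivers only weak convergence of a subsequence. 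In fact, for a general nested dense family $\{M_m\}$ the claim as stated fails: it is a classical result (Seidman's nonconvergence example for least-squares projection methods applied to compact operator equations) that $\phi_m$ may be unbounded even for attainable, noise-free right-hand sides, and convergence requires an extra uniform boundedness or approximation condition linking $M_m$ to $K$. So the missing step is not merely unfinished -- it cannot be closed from ``$M_m\to\textbf{Q}_{\Gamma_u}$'' alone. The correct repair is either to add such a hypothesis (e.g.\ $\sup_m\|\phi_m\|_{\textbf{Q}_{\Gamma_u}}<\infty$, from which your weak-compactness argument plus identification of the limit via (i) and Lemma \ref{equivalence1} does give weak, and with more work strong, convergence), or to restrict to the Krylov subspaces $\mathbb{K}_k(K^*K;K^*f^\delta)$ actually used later, where the convergence is supplied by the CGNE/LSQR theory invoked in Proposition \ref{prop:conv}(iv) via \cite[Theorem 1]{CN19}. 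Since the paper's later results lean on Proposition \ref{converge1}(iii) only through that Krylov setting, this is the route I would recommend making explicit.
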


We end this section with an important property of the residual functional, which is defined as
\begin{eqnarray*}
	J^\delta(\phi) = \frac{1}{2}\|u_2(\phi;T^\delta,\Phi^\delta)\|^2_{0,\Omega},\quad \forall\ \phi\in\textbf{Q}_{\Gamma_u}.
\end{eqnarray*}

\begin{lemma}\label{objective1}
	Let $(T^\delta,\Phi^\delta)$ be arbitrarily given in $Q_{m}$, then we have that
	\begin{eqnarray}\label{function}
		\inf_{\phi\in\textbf{Q}_{\Gamma_u}}J^\delta(\phi)=0.
	\end{eqnarray}
\end{lemma}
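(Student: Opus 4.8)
The plan is to recognize that $J^\delta$ is, up to a factor, the squared residual of the operator equation (\ref{operatoreqnnoise}), so that its infimum vanishes precisely when $f^\delta$ lies in the closure of the range of $K$. First I would use the decomposition $\textbf{u}=\tilde{\textbf{u}}+\hat{\textbf{u}}^\delta$ of the solution of (\ref{weak}) corresponding to the noisy data $(\Phi^\delta,T^\delta)$ on $\Gamma_m$ and the trial datum $\phi$ on $\Gamma_u$. Taking imaginary parts and recalling the definitions $K\phi=\tilde{u}_2$ and $f^\delta=-\hat{u}_2^\delta$, one obtains $u_2(\phi;T^\delta,\Phi^\delta)=\tilde{u}_2+\hat{u}_2^\delta=K\phi-f^\delta$, whence
\begin{equation*}
	J^\delta(\phi)=\tfrac{1}{2}\|K\phi-f^\delta\|_{0,\Omega}^2.
\end{equation*}
Consequently, the assertion (\ref{function}) is equivalent to $f^\delta\in\overline{R(K)}$, the closure being taken in $Q$.

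Next I would invoke the standard orthogonal decomposition $Q=\overline{R(K)}\oplus\text{Ker}(K^*)$, which reduces the claim to showing $(f^\delta,v)_{0,\Omega}=0$ for every $v\in\text{Ker}(K^*)$. The relevant representation of such $v$ is already available from the proof of Proposition \ref{prop:K}(ii): any $v$ with $K^*v=0$ can be written as $v=-\Delta w_1$ for some $w_1\in H^2(\Omega)$ satisfying the homogeneous Cauchy data $w_1=\partial_n w_1=0$ on $\Gamma$.

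The computation then runs as follows. Since $f^\delta=-\hat{u}_2^\delta$ is the imaginary part of a solution of (\ref{databvp}), it is weakly harmonic, i.e. $f^\delta\in V_1$. Applying Green's first identity,
\begin{equation*}
	(f^\delta,v)_{0,\Omega}=\int_\Omega f^\delta(-\Delta w_1)\,dx=\int_\Omega \nabla f^\delta\cdot\nabla w_1\,dx-\int_\Gamma f^\delta\,\partial_n w_1\,ds.
\end{equation*}
The boundary term vanishes because $\partial_n w_1=0$ on $\Gamma$, and the remaining volume integral is zero because $w_1\in H_0^1(\Omega)$ (as $w_1=0$ on $\Gamma$) while $f^\delta$ is weakly harmonic, so $\int_\Omega\nabla f^\delta\cdot\nabla w_1\,dx=0$ by the very definition of $V_1$. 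This yields $f^\delta\perp\text{Ker}(K^*)$, hence $f^\delta\in\overline{R(K)}$, and therefore $\inf_{\phi}J^\delta(\phi)=0$.

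The main obstacle, to my mind, is pinning down the exact structure of $\text{Ker}(K^*)$ so that the orthogonality argument covers all of it rather than merely a convenient subset; this is precisely what the inverse-source characterization established in the proof of Proposition \ref{prop:K}(ii) supplies. Once that representation is in hand, the regularity is comfortable---$w_1\in H^2(\Omega)\cap H_0^1(\Omega)$ together with $f^\delta\in V_1\subset H^1(\Omega)$ makes both the integration by parts and the weak-harmonicity cancellation rigorous---so the remainder of the argument is routine.
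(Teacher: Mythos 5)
Your proof is correct, but it takes a genuinely different route from the paper's. The paper argues by approximation of the data: it invokes the density result of \cite[Proposition 2.3]{BE} to produce compatible data $(T^\delta_n,\Phi^\delta)$ with $T^\delta_n\to T^\delta$ in $Q_{\Gamma_m}$, takes the exact solutions $\phi_n$ of the perturbed problems (so that $u_2(\phi_n;T^\delta_n,\Phi^\delta)=0$), and then bounds $J^\delta(\phi_n)=\frac12\|u_2(0;T^\delta-T^\delta_n,0)\|_{0,\Omega}^2\leq C\|T^\delta-T^\delta_n\|_{0,\Gamma_m}^2\to 0$ using linearity and the a priori estimate (\ref{PriEst}). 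You instead identify $J^\delta(\phi)=\frac12\|K\phi-f^\delta\|_{0,\Omega}^2$ and establish the stronger structural fact that $f^\delta\perp\text{Ker}(K^*)$, i.e. $f^\delta\in\overline{R(K)}$, by combining the Holmgren-based characterization of $\text{Ker}(K^*)$ from the proof of Proposition \ref{prop:K}(ii) with the harmonicity of $f^\delta$. Your route is self-contained within the paper's own machinery (no appeal to the external density theorem of \cite{BE}) and yields as a by-product that every element of $V_1$ is orthogonal to $\text{Ker}(K^*)$, which is a genuinely informative statement about the CCBM formulation; the paper's route is shorter but imports a nontrivial result. One small caution: the weak solution $\textbf{w}$ of the adjoint problem lies a priori only in $\textbf{V}$, and on a Lipschitz domain one cannot in general upgrade $w_1$ to $H^2(\Omega)$ --- the citation of \cite{HCW} in the paper asserts the existence of $H^2$ pairs solving (\ref{isp}), not the $H^2$ regularity of every kernel element. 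This does not break your argument, but you should bypass the classical Green identity: once $w_2\equiv 0$ is known, the real part of the variational formulation of the adjoint problem reads $\int_\Omega\nabla w_1\cdot\nabla\psi\,dx=\int_\Omega v\,\psi\,dx$ for all $\psi\in V$, so taking $\psi=f^\delta$ gives $(v,f^\delta)_{0,\Omega}=\int_\Omega\nabla w_1\cdot\nabla f^\delta\,dx$ directly, and this vanishes because $w_1\in H^1_0(\Omega)$ and $f^\delta$ is weakly harmonic. With that adjustment the proof is fully rigorous at the $H^1$ level.
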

\begin{proof}
	(i) When the data $(T^\delta,\Phi^\delta)$ are compatible, Problem \ref{prob:2.1} exists a solution and thus $u_2$ equals zero. Then, formula (\ref{function}) is obviously true.
	
	(ii) Considering the data $(T^\delta,\Phi^\delta)$ are incompatible, i.e. $K^*f^\delta\notin R(\mathcal{A})$. Let us construct a minimizing sequence for $J^\delta$. According to \cite[Proposition 2.3]{BE}, there exists $(T^\delta_n)_n\subset Q_{\Gamma_m}$ converging towards $T^\delta$ such that $\{T^\delta_n,\Phi^\delta\}$ are compatible. Let $\phi_n$ denote the solution of the associated Eq. (\ref{innerequation}) (where $T^\delta$ is changed to $T^\delta_n$), thus we have $u_2(\phi_n;T^\delta_n,\Phi^\delta)=0$. Then the sequence $(\phi_n)_n$ is minimizing $J^\delta$. Indeed, it holds that
	\begin{eqnarray*}
		&J^\delta(\phi_n) =\frac{1}{2}\|u_2(\phi_n;T^\delta,\Phi^\delta)\|^2_{0,\Omega}= \frac{1}{2}\|u_2(\phi_n;T^\delta,\Phi^\delta)-u_2(\phi_n;T^\delta_n,\Phi^\delta)\|^2_{0,\Omega}\\
		&= \frac{1}{2}\|u_2(0;T^\delta-T^\delta_n,0)\|^2_{0,\Omega}\leq\frac{1}{2}\|\textbf{u}(0;T^\delta-T^\delta_n,0)\|^2_{0,\Omega}\leq C\|T^\delta-T^\delta_n\|^2_{0,\Gamma_m}.
	\end{eqnarray*}
	The last inequality is obtained from Proposition \ref{Prop 2.1.}. This proves that $(J^\delta(\phi_n))_n$ goes to zero and since $J^\delta$ is non-negative, it comes out that the conclusion is true.\qed
\end{proof}

\section{The generalized GKB iterative method for the operator equation}\label{sec:gkb}

Let the exact data $(\Phi, T)$ be compatible and let $(\varphi^\dag, t^\dag)\in H^{-1/2}(\Gamma_u) \times H^{1/2}(\Gamma_u)$ be the unique solution of the original Cauchy problem with the exact data.
\begin{assumption}\label{assump:2}
	Assume $\phi^\dag=\varphi^\dag+i\,t^\dag\in \textbf{Q}_{\Gamma_u}$.
\end{assumption}
Immediately, from Assumption \ref{assump:2}, the true solution $\phi^\dag$ is the unique solution to the operator equation (\ref{operatoreqnnoise}) of noise-free data. With the given data $(\Phi^\delta, T^\delta)$, in this section, we are devoted to proposing an iterative algorithm for computing approximations to $\phi^\dag$.

\subsection{The generalized GKB process}

The generalized GKB process\footnote{The conventional GKB process is proposed only in finite dimensional vector spaces.} for problem (\ref{operatoreqnnoise}) reads as follows:
\begin{equation}
	\left\{
	\begin{array}{l}
		\gamma_1 = \|f^\delta\|_{0,\Omega},\  \gamma_1 v_1 =  f^\delta,\\[0.5em]
		\beta_1 = \|K^*v_1\|_{\textbf{Q}_{\Gamma_u}},   \beta_1 p_1 =  K^*v_1,\\[0.5em]
		\gamma_{j+1} =\|K p_j - \beta_j v_j\|_{0,\Omega},\ \gamma_{j+1} v_{j+1} = K p_j - \beta_j v_j, \\[0.5em]
		\beta_{j+1} =\|K^* v_{j+1} - \gamma_{j+1} p_j \|_{\textbf{Q}_{\Gamma_u}}, \  \beta_{j+1} p_{j+1} = K^* v_{j+1} - \gamma_{j+1} p_j,\\[0.5em]
		j = 1,2,3,\cdots.
	\end{array} \right.\label{gkb}
\end{equation}

\begin{remark}
	As shown in the Appendices, in conventional frameworks, fractional-order Sobolev spaces need to be introduced for necessary solution regularity. Specifically, in Appendix A.2, the forward operator $K$ is defined from $H^{1/2}({\Gamma_u})$ to $Q_{\Gamma_m}$; in Appendix A.3, the forward operator $K$ is defined from	$H^{-1/2}({\Gamma_u})\times H^{1/2}({\Gamma_u})$ to $Q$. As a result, when implementing the GKB process	(\ref{gkb}), we have to compute the norms $\|\cdot\|_{1/2,\Gamma_u}$ or $\|\cdot\|_{-1/2,\Gamma_u}$ which makes the numerical computation more challenging.
\end{remark}

Throughout this paper, the following assumption is made.
\begin{assumption}\label{assumpProcedure}
	For infinite dimensional problem (\ref{operatoreqnnoise}), the GKB process for the reduced operator equations does not stop in finite steps unless some termination criteria is introduced.
\end{assumption}

From the definition and the method of induction, it is not difficult to show the following result:
\begin{lemma}
	Let Assumption \ref{assumpProcedure} hold, and $\{p_j\}_{j=1}^{\infty}, \{v_j\}_{j=1}^{\infty}$ be obtained through (\ref{gkb}). Then they are normalized orthogonal in spaces $\textbf{Q}_{\Gamma_u}$ and $Q$, respectively.
\end{lemma}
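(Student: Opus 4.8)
The plan is to split the statement into its two halves: normalization, which is built directly into the recursion, and orthogonality, which I would obtain by an induction that couples the two families $\{v_j\}$ and $\{p_j\}$ through the adjoint identity. First I would observe that normalization is immediate from (\ref{gkb}): each of $\gamma_1,\beta_1,\gamma_{j+1},\beta_{j+1}$ is defined as the $Q$- or $\textbf{Q}_{\Gamma_u}$-norm of precisely the vector it rescales, and under Assumption \ref{assumpProcedure} none of these scalars vanishes, so $\|v_j\|_{0,\Omega}=1$ and $\|p_j\|_{\textbf{Q}_{\Gamma_u}}=1$ hold for every $j$ by construction. It is then convenient to rewrite (\ref{gkb}) as the two three-term recurrences
\[
K p_j = \gamma_{j+1}v_{j+1}+\beta_j v_j,\qquad K^*v_{j+1}=\beta_{j+1}p_{j+1}+\gamma_{j+1}p_j,
\]
supplemented by the base identity $K^*v_1=\beta_1 p_1$ (equivalently $\gamma_1 p_0:=0$). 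The only structural inputs I would invoke are the defining adjoint relation $(K\phi,v)_{0,\Omega}=(\phi,K^*v)_{\textbf{Q}_{\Gamma_u}}$ and the fact that all the coefficients $\gamma_j,\beta_j$ are real and positive.

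The orthogonality I would prove by induction on $j$, establishing at each stage, \emph{in this order}, (a)~$v_{j+1}\perp v_i$ in $Q$ for all $i\le j$, and then (b)~$p_{j+1}\perp p_i$ in $\textbf{Q}_{\Gamma_u}$ for all $i\le j$; the interleaving is essential, since step (b) consumes the relation just produced in (a). For (a) I would expand
\[
\gamma_{j+1}(v_{j+1},v_i)_{0,\Omega}=(p_j,K^*v_i)_{\textbf{Q}_{\Gamma_u}}-\beta_j(v_j,v_i)_{0,\Omega},
\]
substitute $K^*v_i=\beta_i p_i+\gamma_i p_{i-1}$, and let the inductive orthonormality of $\{p_1,\dots,p_j\}$ annihilate every term; the only nontrivial cancellation occurs at $i=j$, where the $\beta_j$ surviving from $(p_j,K^*v_j)_{\textbf{Q}_{\Gamma_u}}$ exactly cancels the explicit $-\beta_j(v_j,v_j)_{0,\Omega}$. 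Symmetrically, for (b) I would expand
\[
\beta_{j+1}(p_{j+1},p_i)_{\textbf{Q}_{\Gamma_u}}=(v_{j+1},K p_i)_{0,\Omega}-\gamma_{j+1}(p_j,p_i)_{\textbf{Q}_{\Gamma_u}},
\]
substitute $K p_i=\gamma_{i+1}v_{i+1}+\beta_i v_i$, and invoke the orthonormality of $\{v_1,\dots,v_{j+1}\}$ now available from step (a); everything vanishes except at $i=j$, where the $\gamma_{j+1}$ coming from $(v_{j+1},K p_j)_{0,\Omega}$ cancels $-\gamma_{j+1}(p_j,p_j)_{\textbf{Q}_{\Gamma_u}}$. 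The base case $j=1$ is the same pair of computations read off from $K^*v_1=\beta_1 p_1$.

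I do not expect a genuine obstacle here: the argument is the classical Lanczos/Golub–Kahan orthogonality induction, carried out in the infinite-dimensional Hilbert-space setting that Assumption \ref{assumpProcedure} makes meaningful. The points that demand care are purely organizational. First, the index bookkeeping at the boundaries $i=1$, $i=j-1$, and $i=j$ must be handled so that each three-term substitution lands entirely inside the already-established orthonormal set. Second, one must respect the inner-product conventions of the complex space $\textbf{Q}_{\Gamma_u}$ against the real space $Q$ and apply the adjoint identity in the correct slot; because $\gamma_j,\beta_j$ are real and the relevant $Q$-inner products are of real-valued functions, no conjugation actually survives, so these reduce to real scalar identities and the induction closes cleanly.
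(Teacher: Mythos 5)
Your proof is correct and follows exactly the route the paper indicates: the paper omits the details, remarking only that the result follows ``from the definition and the method of induction,'' and your interleaved Lanczos-style induction (normalization from the rescaling, then alternately $v_{j+1}\perp v_i$ and $p_{j+1}\perp p_i$ via the adjoint identity and the three-term recurrences) is the standard argument being alluded to. The boundary bookkeeping and the cancellation at $i=j$ are handled correctly, so nothing is missing.
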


\begin{definition}
	Let $\lambda$ be a vector in $\mathbb{R}^k$, $G$ be a matrix in $\mathbb{R}^{k \times k}$, and $A$ be a linear operator with domain contained in some Hilbert space $H$. Denote $X_k = [x_1,\cdots,x_k]$ with $x_j\in H, j=1,\cdots, k$, and define
	\begin{eqnarray*}
		X_k \star \lambda := \sum\limits_{j=1}^k \lambda_j x_j, \quad A X_k := [A x_1,A x_2,\cdots,A x_k], \\
		X_k \star G := [X_k \star G(:,1),X_k \star G(:,2),\cdots, X_k \star G(:,k)],
	\end{eqnarray*}
	where $G(:,j)$ represents the j-th column of the matrix $G$.
\end{definition}

From the definition above, it is easy to verify the following relation:
\begin{equation}
	(X_k\star  G)\star d = X_k\star (Gd).\label{starrelation}
\end{equation}

With the introduction of the operations above, the generalized GKB process (\ref{gkb}) can be written in a form of matrix-vector type:
\begin{equation}
	\left\{
	\begin{array}{l}
		V_{k+1} \star (\gamma_1 e_1) = f^\delta,\\[0.5em]
		K P_k = V_{k+1} \star G_k,\\[0.5em]
		K^* V_{k+1} = P_k \star G_k^T + \beta_{k+1} p_{k+1} e_{k+1}^T,
	\end{array} \right.\label{gkbmatix}
\end{equation}
where $e_i$ is the $i$th normal unit vector, $P_k = [p_1,\cdots,p_k], V_k = [v_1,\cdots,v_k]$, $G_k^T$ is the transpose of $G_k$ with
\begin{eqnarray}
	G_k =
	\left(
	\begin{array}{cccc}
		\beta_1 & & & \\
		\gamma_2 & \beta_2 & & \\	
		& \ddots & \ddots & \\
		& & \gamma_k & \beta_k \\
		& & & \gamma_{k+1}
	\end{array} \right)_{(k+1)\times k}.\label{matrixG}
\end{eqnarray}
Moreover, we have the following isometric result:
\begin{proposition}\label{prop:isometric}
	Let $\{v_j\}_{j=1}^{\infty}$ be produced through (\ref{gkb}). We denote by $V_k = [v_1,v_2,\cdots,v_k]$ and $\|\cdot\|_2$ the Euclid norm of $\mathbb{R}^k$. Then for any $\lambda\in \mathbb{R}^k$, it holds that $\|V_k \star \lambda\|_{0,\Omega} = \|\lambda\|_2$.
\end{proposition}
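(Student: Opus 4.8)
The plan is to reduce the claim entirely to the orthonormality of the family $\{v_j\}$ established in the preceding Lemma. Recall that by the definition of the star operation, $V_k \star \lambda = \sum_{j=1}^k \lambda_j v_j$, so the quantity on the left-hand side is simply the $Q=L^2(\Omega)$ norm of a finite linear combination of the $v_j$, with coefficients given by the entries of $\lambda$.

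First I would expand the squared norm using the inner product $(\cdot,\cdot)_{0,\Omega}$ of $Q$, bilinearity giving
\[
\|V_k \star \lambda\|_{0,\Omega}^2 = \Big(\sum_{i=1}^k \lambda_i v_i,\ \sum_{j=1}^k \lambda_j v_j\Big)_{0,\Omega} = \sum_{i=1}^k \sum_{j=1}^k \lambda_i \lambda_j\, (v_i,v_j)_{0,\Omega}.
\]
Then I would invoke the preceding Lemma, which guarantees that $\{v_j\}_{j=1}^{\infty}$ is orthonormal in $Q$ (the space carrying precisely the $(\cdot,\cdot)_{0,\Omega}$ inner product). Hence $(v_i,v_j)_{0,\Omega} = \delta_{ij}$, the off-diagonal terms in the double sum vanish, and the expression collapses to
\[
\|V_k \star \lambda\|_{0,\Omega}^2 = \sum_{j=1}^k \lambda_j^2 = \|\lambda\|_2^2.
\]
Taking square roots yields the asserted identity $\|V_k \star \lambda\|_{0,\Omega} = \|\lambda\|_2$.

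There is no genuine obstacle in this argument; it is a direct consequence of orthonormality together with the Pythagorean expansion of the norm. The only point that requires a moment's care is to match the norm in which orthonormality holds, namely $Q$ equipped with $(\cdot,\cdot)_{0,\Omega}$, against the norm $\|\cdot\|_{0,\Omega}$ appearing in the statement; these coincide by the definitions fixed earlier in Section \ref{sec:ccbm}. Phrased invariantly, the computation shows that the linear map $\lambda \mapsto V_k \star \lambda$ is an isometry from $(\mathbb{R}^k,\|\cdot\|_2)$ onto $\mathrm{span}\{v_1,\dots,v_k\}\subset Q$, which is exactly the isometric property that will later let us transfer least-squares residuals in $Q$ to Euclidean residuals in $\mathbb{R}^{k+1}$ for the Givens-rotation step.
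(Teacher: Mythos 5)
Your proof is correct and follows essentially the same route as the paper: expand the squared $L^2(\Omega)$ norm of $\sum_j \lambda_j v_j$ and use the orthonormality of $\{v_j\}$ from the preceding lemma to reduce it to $\sum_j \lambda_j^2$. If anything, your version is slightly more explicit than the paper's, which silently drops the cross terms when passing from $\int_\Omega(\sum_j \lambda_j v_j)^2\,dx$ to $\int_\Omega \sum_j \lambda_j^2 v_j^2\,dx$.
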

\begin{proof}
	Due to the normalization and orthogonality of $\{v_j\}_{j=1}^{\infty}$, the proposition follows from
	\begin{align*}
		\|V_k \star \lambda\|^2_{0,\Omega}&=\int_\Omega\left(\sum_{j=1}^k \lambda_j v_j\right)^2dx  =\int_\Omega \sum_{j=1}^k \lambda_j^2 v_j^2dx\\
		& =\sum_{j=1}^k \lambda_j^2\int_\Omega  v_j^2dx=\sum_{j=1}^k \lambda_j^2=\|\lambda\|_2^2.
	\end{align*}\qed
\end{proof}

Let $\{p_j\}_{j=1}^{\infty}$ and $\{v_j\}_{j=1}^{\infty}$ be produced through (\ref{gkb}). Then, by using arguments similar to those of \cite[Proposition 3.8]{KJ}, we can prove that for each $k\in \mathbb{N}$, $\{p_j\}_{j=1}^{k}$ and $\{v_j\}_{j=1}^{k}$ are the orthonormal bases of the subspaces $\mathbb{K}_k(K^*K;K^*f^\delta)$ and $\mathbb{K}_k(KK^*;f^\delta)$, respectively, where the Krylov subspace $\mathbb{K}_k(L;g)$ for a linear operator $L$ in a space $H$ and an element $g\in H$ is defined as $\mathbb{K}_k(L;g)=\textrm{span}\{g, Lg, L^2 g, \cdots, L^{k-1}g\}$.

Now we are in a position to present the approximation solutions to the exact solution $\phi^\dag$. With a fixed $k\in \mathbb{N}$, we solve the optimal problem in finite dimension subspaces $\mathbb{K}_k(K^*K;p_1)$ for the approximate solution:
\begin{eqnarray}\label{minimizationproblem}
	\min\limits_{\phi \in \mathbb{K}_k(K^*K;p_1)}\|K\phi-f^\delta\|_{0,\Omega}^2. \label{proj}
\end{eqnarray}
According to the solvability of the problem (\ref{finitedimproblem}), we conclude that the problem (\ref{minimizationproblem}) exists a unique solution. For any $\phi_k \in \mathbb{K}_k(K^*K;p_1)$, we have the expansion
\begin{equation}
	\phi_k = P_k \star \lambda,\label{expansion}
\end{equation}
for some $\lambda \in {\mathbb{R}^k}$, where $P_k = [p_1,\cdots,p_k]$. Due to (\ref{gkbmatix}),
\begin{eqnarray*}
	K\phi_k-f^\delta  = (V_{k+1} \star G_k) \star \lambda-V_{k+1} \star (\gamma_1 e_1)  = V_{k+1} \star (G_k \star \lambda-\gamma_1 e_1).
\end{eqnarray*}
Then, by using Proposition \ref{prop:isometric}, we have 
$$\|K\phi-f^\delta \|_{0,\Omega}^2 = \|G_k\lambda-\gamma_1 e_1 \|_2^2.$$ 
As a result, the problem (\ref{proj}) is reduced to the following problem:
\begin{equation}
	\min\limits_{\lambda \in {\mathbb{R}^k}} \|G_k \lambda - \gamma_1 e_1\|_2^2,
	\label{opteqn}
\end{equation}

Since $G_k$ is column full rank, the minimizer $\lambda^*$ of the optimal problem (\ref{opteqn}) exists uniquely. Once $\lambda^*$ is obtained, through the expression (\ref{expansion}), the approximate solution $\phi_k^\delta=P_k\star\lambda^*$ is arrived at.

It is claimed in \cite{GLO,OS} that for discrete $K$, the matrix $G_k$ usually contains very good approximations to the large singular values and rough approximations to the small ones of $K$, and $G_k$ is typically ill-conditioned. In \cite{OS}, the TSVD is used for the regularization of the problem (\ref{opteqn}). For the infinite compact operator $K$ here, by combining Proposition \ref{prop:K}(iv) and \cite[Proposition 1]{CN19}, we obtain the following result for the elements of matrix $G_k$:
\begin{lemma}\label{prop:Gk}
	Both $\{\beta_j\}_{j=1}^{+\infty}$ and $\{\gamma_j\}_{j=1}^{+\infty}$ converge to zero.
\end{lemma}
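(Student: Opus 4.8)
The plan is to exploit the compactness of $K$ together with the orthonormality of the sequences generated by the GKB process. The central observation is the Pythagorean identity
\begin{equation*}
	\beta_j^2 + \gamma_{j+1}^2 = \|K p_j\|_{0,\Omega}^2, \qquad j = 1,2,3,\cdots,
\end{equation*}
which reduces the whole claim to showing that $\|K p_j\|_{0,\Omega} \to 0$ as $j \to \infty$. Once this identity is in hand, the result is immediate, since the right-hand side controls both nonnegative quantities $\beta_j^2$ and $\gamma_{j+1}^2$ simultaneously.

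First I would establish the identity. From the third line of the recurrence (\ref{gkb}), rearranged, we have $K p_j = \beta_j v_j + \gamma_{j+1} v_{j+1}$. By the preceding lemma, $\{v_j\}_{j=1}^{\infty}$ is an orthonormal system in $Q$, so $v_j \perp v_{j+1}$ and $\|v_j\|_{0,\Omega} = \|v_{j+1}\|_{0,\Omega} = 1$. Taking squared $Q$-norms on both sides and using orthogonality yields the identity immediately. The leading coefficient $\gamma_1 = \|f^\delta\|_{0,\Omega}$ is a fixed finite number and plays no role in the limit, so it suffices to control the tail indices.

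Next I would show $K p_j \to 0$ strongly in $Q$. Since $\{p_j\}_{j=1}^{\infty}$ is an orthonormal system in $\textbf{Q}_{\Gamma_u}$ (again by the preceding lemma), Bessel's inequality gives $\sum_j |(p_j, h)_{\textbf{Q}_{\Gamma_u}}|^2 \leq \|h\|_{\textbf{Q}_{\Gamma_u}}^2 < \infty$ for every fixed $h \in \textbf{Q}_{\Gamma_u}$, whence $(p_j, h)_{\textbf{Q}_{\Gamma_u}} \to 0$; that is, $p_j \rightharpoonup 0$ weakly. Because $K$ is compact (as established in Section \ref{sec:ccbm}; cf. Proposition \ref{prop:K}(iv)), it maps weakly convergent sequences to norm-convergent ones, so $K p_j \to 0$ strongly in $Q$. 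Combining this with the identity gives $\beta_j^2 + \gamma_{j+1}^2 \to 0$, and therefore both $\beta_j \to 0$ and $\gamma_{j+1} \to 0$; reindexing the latter yields $\gamma_j \to 0$, as claimed.

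I expect the only real content to lie in the passage $p_j \rightharpoonup 0 \Rightarrow K p_j \to 0$: this is precisely where compactness of $K$ is indispensable, and it is the step that would fail for a merely bounded $K$. The exponential decay of the singular values recorded in Proposition \ref{prop:K}(iv) is in fact stronger than what is needed for mere convergence to zero — plain compactness suffices — but it is exactly what one invokes (together with \cite[Proposition 1]{CN19}) when one wants the sharper quantitative information on how fast $\beta_j$ and $\gamma_j$ decay, which in turn governs the ill-conditioning of $G_k$ discussed afterwards.
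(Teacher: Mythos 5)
Your proof is correct, and it takes a recognizably different route from the paper's. The paper works by contradiction on each sequence separately: it writes $\beta_j=(Kp_j,v_j)_{0,\Omega}$ and $\gamma_{j+1}=(Kp_j,v_{j+1})_{0,\Omega}$ from the matrix form (\ref{gkbmatix}), assumes infinitely many of these inner products stay above $2\delta$, uses compactness of $K$ on the bounded orthonormal sequence $\{p_n\}$ to extract a norm-convergent subsequence $Kp_{n(j)}\to z$, and then derives a contradiction with Bessel's inequality applied to $z$ against the orthonormal $\{v_{n(j)}\}$. You instead bundle the two quantities into the single Pythagorean identity $\beta_j^2+\gamma_{j+1}^2=\|Kp_j\|_{0,\Omega}^2$ and prove directly that $\|Kp_j\|_{0,\Omega}\to 0$ via $p_j\rightharpoonup 0$ (Bessel on the orthonormal $\{p_j\}$) and the weak-to-norm continuity of the compact operator $K$. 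Both arguments rest on exactly the same two ingredients --- compactness of $K$ and orthonormality of the GKB sequences --- but yours is shorter, avoids subsequence extraction and the contradiction scaffolding, and has the minor bonus of the explicit bound $\max\{\beta_j,\gamma_{j+1}\}\leq\|Kp_j\|_{0,\Omega}$, which ties the decay of the entries of $G_k$ directly to the decay of $K$ along the Krylov basis; the paper's version, by treating $\beta_j$ and $\gamma_{j+1}$ as separate inner products, only needs the weaker fact that bounded sequences have norm-convergent images under $K$. Your closing remark is also apt: plain compactness suffices for the qualitative statement, and the exponential singular-value decay of Proposition \ref{prop:K}(iv) together with \cite[Proposition 1]{CN19} is only needed for the quantitative conditioning discussion that follows the lemma.
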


Proof of the above lemma can be found in Appendix A.4. As a corollary of Lemma \ref{prop:Gk} above, it is indicated that the problem (\ref{opteqn}) is ill-posed when $k$ is large.
In fact, for any $k\in \mathbb{N}$, let $\sigma_{k,k}<\sigma_{k,k-1}<\cdots<\sigma_{k,2}<\sigma_{k,1}$ be all the singular values of $G_k$. Then we have
\[
\sigma_{k,1}\geq \sqrt{\beta_1^2+ \gamma_2^2},\ \sigma_{k,k}\leq \sqrt{\beta_k^2+ \gamma_{k+1}^2},
\]
which means the spectral condition number of $G_k$
\[
\kappa_2(G_k)=\frac{\sigma_{k,1}}{\sigma_{k,k}}\geq \sqrt{\frac{\beta_1^2+ \gamma_2^2}{\beta_k^2+ \gamma_{k+1}^2}}\to \infty
\]
increases in an exponential way as $k\to \infty$. 

In \cite{KJ}, the authors apply the GKB process to the problem
\[
(K+\mu I) \phi =f^\delta
\]
for the regularization with $\mu>0$ being the regularization parameter. In this paper, however, the GKB process is directly used for the problem (\ref{minimizationproblem}). Moreover, no further regularization strategy is adopted for the resolution of the problem (\ref{opteqn}). Instead, a QR decomposition is taken for computing the minimizer $\lambda^*$. The reasons are threefold. Firstly, with the CCBM, the operator $K$ of specific coupled structure and smoothing $f^\delta$ are produced which improve the numerical stability. Secondly, compared with solving the normal equation, applying QR decomposition for the least square problem results in solutions which are more numerically stable and of greater accuracy, and this is especially true when the underlying problem is ill-posed. Thirdly, as shown by the numerical results in Section \ref{subsec:numresult}, the iterative step $k$ is usually small, which makes $G_k$ mildly ill-conditioned.

In broad terms, there are three orthogonal transformations for the purpose of QR decomposition: Gram-Schmidt transformation, Householder transformation and Givens transformation. Due to its advantages in space-saving and computational efficiency, Givens transformation is applied for the resolution of the problem (\ref{opteqn}). Specifically, for $k=1$, let $\bar{\tau}_1=\beta_1, \bar{\mu}_1= \gamma_1, \tau_1= \sqrt{\bar{\tau}_1^2 + \gamma_{2}^2}$, $c_1 = \frac{\bar{\tau}_1}{\tau_1}$, $s_1 = \frac{\gamma_{2}}{\tau_1}$, and
\[
Q_1=\left(
\begin{array}{ll}
	c_1 & s_1 \\
	-s_1 & c_1
\end{array} \right).
\]
Then
\[
Q_1G_1=
\left(
\begin{array}{l}
	\tau_1 \\
	0
\end{array} \right),\quad
Q_1 \gamma_1 e_1=
\left(
\begin{array}{l}
	\mu_1  \\
	\bar{\mu}_2
\end{array} \right)
:=
\left(
\begin{array}{l}
	c_1 \bar{\mu}_1 \\
	-s_1 \bar{\mu}_1
\end{array} \right)
\]
and (\ref{opteqn}) is reduced to
\[
\min\limits_{\lambda \in {\mathbb{R}}}
\|\left(
\begin{array}{l}
	\tau_1 \\
	0
\end{array} \right)\lambda - \left(
\begin{array}{l}
	\mu_1  \\
	\bar{\mu}_2
\end{array} \right)\|_2^2 ,
\]
which gives $\lambda_1^*=\mu_1/\tau_1$. As a result, the approximate solution $\phi_1=\lambda_1^* p_1$, and the residual $\|K\phi_1-f^\delta \|_{0,\Omega} = \|G_1\lambda_1^*-\gamma_1 e_1 \|_2=\vert\bar{\mu}_2\vert$.

For $k=2$, let further $\bar{\tau}_2=c_1 \beta_2 , \eta_2=s_1 \beta_2, \tau_2= \sqrt{\bar{\tau}_2^2 + \gamma_{3}^2}$, $c_2 = \frac{\bar{\tau}_2}{\tau_2}$, $s_2 = \frac{\gamma_{3}}{\tau_2}$, and
\[
\tilde{Q}_1=\left(
\begin{array}{lll}
	c_1 & s_1 & 0\\
	-s_1 & c_1 & 0\\
	0   & 0   & 1
\end{array} \right), \tilde{Q}_2=\left(
\begin{array}{lll}
	1 &  0   & 0  \\
	0 &  c_2 & s_2\\
	0 & -s_2 & c_2
\end{array} \right), Q_2= \tilde{Q}_2\tilde{Q}_1.
\]
Then
\[
Q_2G_2=
\left(
\begin{array}{ll}
	\tau_1 & \eta_2\\
	0     & \tau_2\\
	0     & 0
\end{array} \right),\quad
Q_2 \gamma_1 e_1=
\left(
\begin{array}{l}
	\mu_1  \\
	\mu_2\\
	\bar{\mu}_3
\end{array} \right)
:=
\left(
\begin{array}{l}
	\mu_1 \\
	c_2 \bar{\mu}_2\\
	-s_2 \bar{\mu}_2
\end{array} \right)
\]
and (\ref{opteqn}) is reduced to
\[
\min\limits_{\lambda \in {\mathbb{R}^2}}
\|\left(
\begin{array}{ll}
	\tau_1 & \eta_2\\
	0     & \tau_2\\
	0     &  0
\end{array} \right)\lambda - \left(
\begin{array}{l}
	\mu_1  \\
	\mu_2\\
	\bar{\mu}_3
\end{array} \right)\|_2^2 ,
\]
which gives $\lambda_2^*=(\lambda_{2,1}^*,\lambda_{2,2}^*)^T$ with $\lambda_{2,1}^*=(\mu_1-\mu_2\eta_2/\tau_2)/\tau_1$, $\lambda_{2,2}^*=\mu_2/\tau_2$. As a result, the approximate solution is
\[
\phi_2=\lambda_{2,1}^* p_1+\lambda_{2,2}^* p_2=\lambda_1^* p_1 + \frac{\mu_2}{\tau_2} q_2
\]
with $q_2=p_2-\eta_2/\tau_1 q_1$ and $q_1=p_1$. Correspondingly, the residual $\|K\phi_2-f^\delta \|_{0,\Omega} = \|G_2\lambda_2^*-\gamma_1 e_1 \|_2=\vert\bar{\mu}_3\vert$.

Continuing the process for $k=3, 4, \cdots$, we derive an iterative scheme for producing a sequence of approximate solutions, as shown in \textbf{Algorithm 1}.

\begin{algorithm}[H] \label{algm1}
	\caption{The generalized GKB method for the operator equation (\ref{operatoreqnnoise}).}
	\noindent \textbf{Initialize} \\
	1. Set $\phi_0^\delta=0$;\\
	2. Compute $\gamma_1 = \|f^\delta\|_{0,\Omega}$, $ v_1 =  f^\delta/\gamma_1$,
	$\beta_1 = \|K^*v_1\|_{\textbf{Q}_{\Gamma_u}}$, $p_1 =  K^*v_1/\beta_1 $; \\
	3. Set $q_1=p_1, \bar{\mu}_1=\gamma_1, \bar{\tau}_1=\beta_1$;\\
	\textbf{The GKB process } \\
	For $j = 1, 2, \cdots, $ until stopping: \\
	4. $\gamma_{j+1} =\|K p_j - \beta_j v_j\|_{0,\Omega}$,  $v_{j+1} = (K p_j - \beta_j v_j)/\gamma_{j+1}$;\\
	5. $\beta_{j+1} =\|K^* v_{j+1} - \gamma_{j+1} p_j \|_{\textbf{Q}_{\Gamma_u}}$, $ p_{j+1} = (K^* v_{j+1} - \gamma_{j+1} p_j)/\beta_{j+1}$;\\
	\textbf{QR decomposition together with Givens transformation} \\
	6. $\tau_j= \sqrt{\bar \tau_j^2 + \gamma_{j+1}^2}, c_j = \frac{\bar{\tau}_j}{\tau_j}, s_j = \frac{\gamma_{j+1}}{\tau_j}$;\\
	7. $\eta_{j+1} = s_i\beta_{j+1}, \bar{\tau}_{j+1} = c_j\beta_{j+1}, \mu_j = c_j\bar{\mu}_j, \bar{\mu}_{j+1} = -s_j\bar{\mu}_i$;\\
	8. $\phi_j^\delta = \phi_{j-1}^\delta + \frac{\mu_j}{\tau_j}q_j, q_{j+1} = p_{j+1} - \frac{\eta_{j+1}}{\tau_j}q_j $;\\
	\textbf{Residual computation} \\
	9. $\vert\bar{\mu}_{j+1}\vert$.\\
	\textbf{Output}\\
	10. $\phi_j^\delta$.
\end{algorithm}

The GKB process, together with Givens transformation, is known as LSQR algorithm, and has been applied frequently to discrete linear systems \cite{PS,OS}. A continuous version of the GKB method, together with a variational regularization strategy for the linear Fredholm integral equations, is considered in \cite{KJ}. The method for general compact operator equations with noise-free data is also studied in \cite{CN19,NP}. This paper differs from all these other works in that noisy data is given consideration while no additional regularization parameter is introduced. Instead, only the iterative step $k$ plays the role of the regularization parameter.

Next we study the convergence properties of the generalized GKB method. For this purpose let $\mathcal{K}(K^*K;K^*f^\delta)$ be the closed linear span of the vectors $\{(K^*K)^k K^*f^\delta\}_{k=0}^\infty$, that is,
\[
\mathcal{K}(K^*K;K^*f^\delta)=\lim_{k\to \infty}\mathbb{K}_k(K^*K;K^*f^\delta).
\]
For a fixed $\delta\geq 0$, denote by $\phi^\delta_k\in \mathbb{K}_k(K^*K;p_1)=\mathbb{K}_k(K^*K;K^*f^\delta)$ the unique solution of the problem (\ref{proj}). Then we have the following results.

\begin{proposition}\label{prop:conv}
	For the approximate solution sequence $\{\phi^\delta_k\}_{k=1}^\infty$, there holds:
	\begin{itemize}
		\item[(i)]  $\{\phi^\delta_k\}_{k=1}^\infty$ is a consistent-pseudo-solution  of Eq. (\ref{innerequation});
		\item[(ii)] $\{\phi^\delta_k\}_{k=1}^\infty$ is a minimizing sequence of the objective function $J^\delta(\cdot)$ in $\textbf{Q}_{\Gamma_u}$;
		\item[(iii)] if $f^\delta\notin R(K)$,  $\{\phi_k^\delta\}_{k=1}^\infty$ blows up in $\textbf{Q}_{\Gamma_u}$.;
		\item[(iv)]  if $f^\delta\in R(K)$, $
		\phi_k^\delta\to \phi^\delta\  in\ \textbf{Q}_{\Gamma_u}$,	where $\phi^\delta$ is the unique solution to Eq. (\ref{operatoreqnnoise}).
	\end{itemize}
\end{proposition}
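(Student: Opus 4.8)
The plan is to observe that the GKB iterate $\phi_k^\delta$ is exactly the Ritz--Galerkin solution of (\ref{finitedimproblem}) for the special choice $M_k=\mathbb{K}_k(K^*K;K^*f^\delta)$, so that the entire statement can be harvested from the machinery already in place (Lemma \ref{objective1}, Propositions \ref{cpsexistence} and \ref{converge1}) once two structural facts are checked. First I would note that $\phi_k^\delta$ solves the least-squares problem (\ref{proj}), whose first-order optimality condition reads $(K\phi_k^\delta,K\phi')_{0,\Omega}=(f^\delta,K\phi')_{0,\Omega}$ for all $\phi'\in\mathbb{K}_k(K^*K;K^*f^\delta)$; under Assumption \ref{assumpProcedure} these Krylov spaces are nested with $\dim\mathbb{K}_k=k$ and increase to the closed Krylov space $\mathcal{K}(K^*K;K^*f^\delta)$. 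Second, writing $\mathcal{A}=K^*K$ and using $\textbf{u}=\tilde{\textbf{u}}+\hat{\textbf{u}}^\delta$ so that the imaginary part obeys $u_2(\phi;T^\delta,\Phi^\delta)=K\phi-f^\delta$, I would record $J^\delta(\phi)=\tfrac12\|K\phi-f^\delta\|_{0,\Omega}^2$, so that minimizing the residual over $M_k$ is the same as minimizing $J^\delta$ over $M_k$.

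With this identification, parts (i) and (ii) follow the template of Proposition \ref{converge1}. For (ii), Lemma \ref{objective1} gives $\inf_\phi J^\delta(\phi)=0$, i.e.\ $f^\delta\in\overline{R(K)}$, and the remaining point is that the minimal residual over the growing Krylov spaces actually tends to zero. Passing to the spectral representation of $KK^*$ on $Q$, this amounts to the polynomial-approximation statement $\inf_{r(0)=1,\ \deg r\le k}\int|r(\lambda)|^2\,d\nu(\lambda)\to 0$, where $\nu$ is the spectral measure of $f^\delta$; since $f^\delta\in\overline{R(K)}=\mathrm{Ker}(K^*)^\perp$ this measure carries no mass at $\lambda=0$, and a shifted-Chebyshev polynomial normalized to $1$ at the origin settles the claim. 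Hence $J^\delta(\phi_k^\delta)\to 0$, which is (ii); and since $\mathcal{A}\phi_k^\delta-K^*f^\delta=K^*(K\phi_k^\delta-f^\delta)$ has norm at most $\|K^*\|$ times the residual, the defining limit (\ref{consistentpseudosolution}) holds, giving (i).

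For (iii) I would argue by contradiction in the style of Proposition \ref{cpsexistence}(i): by (i), $\{\phi_k^\delta\}$ is a consistent-pseudo-solution of (\ref{innerequation}); were it bounded in $\textbf{Q}_{\Gamma_u}$, Proposition \ref{cpsexistence}(ii) would force $f^\delta\in R(K)$, contradicting $f^\delta\notin R(K)$. Therefore $\|\phi_k^\delta\|_{\textbf{Q}_{\Gamma_u}}\to+\infty$.

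The substantive point is (iv), and here the obstacle is that Proposition \ref{converge1}(iii) assumes $M_m\to\textbf{Q}_{\Gamma_u}$, whereas the Krylov spaces fill only $\mathcal{K}(K^*K;K^*f^\delta)$, which need not be dense. When $f^\delta\in R(K)$ there is a unique $\phi^\delta$ with $K\phi^\delta=f^\delta$, so $K^*f^\delta=\mathcal{A}\phi^\delta$ and $\mathcal{A}^{k}K^*f^\delta=\mathcal{A}^{k+1}\phi^\delta$. Exploiting that $K$ has infinitely many \emph{distinct} singular values (Proposition \ref{prop:K}(iv)), a Vandermonde/moment argument identifies $\mathcal{K}(K^*K;K^*f^\delta)=\overline{\mathrm{span}}\{p_j:(\phi^\delta,p_j)_{\textbf{Q}_{\Gamma_u}}\ne 0\}$, where $\{p_j\}$ are the right singular functions; since $\phi^\delta$ expands precisely along these directions, $\phi^\delta\in\mathcal{K}(K^*K;K^*f^\delta)$. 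This is the new step that repairs the density hypothesis: $M_k=\mathbb{K}_k$ now increases densely to a closed space $\mathcal{K}$ that genuinely contains $\phi^\delta$, so the convergence $\phi_k^\delta\to\phi^\delta$ follows by reusing the argument of Proposition \ref{converge1}(iii) within $\mathcal{K}$ --- the consistent-pseudo-solution property with injectivity of $\mathcal{A}$ (Lemma \ref{injective}) yields weak convergence (each weak cluster point $\phi^*$ satisfies $\mathcal{A}\phi^*=K^*f^\delta=\mathcal{A}\phi^\delta$, hence $\phi^*=\phi^\delta$), which is then upgraded to strong convergence. The hard part will be exactly this upgrade: because $K$ is compact, smallness of the residual $\|K\phi_k^\delta-f^\delta\|_{0,\Omega}$ does not by itself control the error $\|\phi_k^\delta-\phi^\delta\|_{\textbf{Q}_{\Gamma_u}}$, and it is the spectral containment $\phi^\delta\in\mathcal{K}$ (equivalently, attainability of the data) together with the monotonicity of the Krylov iterate norms that bridges this gap.
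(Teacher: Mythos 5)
Your proposal follows essentially the same route as the paper: identify $\phi_k^\delta$ with the Ritz--Galerkin solution of (\ref{finitedimproblem}) on the Krylov subspaces $\mathbb{K}_k(K^*K;K^*f^\delta)$, obtain (i)--(iii) from Proposition \ref{converge1}, Proposition \ref{cpsexistence} and Lemma \ref{objective1}, and reduce (iv) to the containment $\phi^\delta\in\mathcal{K}(K^*K;K^*f^\delta)$ followed by the convergence argument of Proposition \ref{converge1}(iii). The only difference is one of self-containedness: where the paper simply cites \cite[Theorem 1]{CN19} for that containment and leaves the residual decay and the weak-to-strong upgrade implicit in Proposition \ref{converge1}, you supply the spectral (Chebyshev polynomial and cyclic-subspace) arguments in-line, which is consistent with, and if anything slightly more complete than, the paper's own proof.
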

\begin{proof}
	Recall that $\phi^\delta_k$ solves (\ref{finitedimproblem}) with $M_m$ being replaced by $\mathbb{K}_k(K^*K;K^*f^\delta)$. Therefore, conclusions (i) and (iii) are derived directly from Proposition \ref{converge1} (i) and (ii). Conclusion (iii) can be verified by combing the definition of consistent-pseudo-solution and Lemma \ref{objective1}.
	
	For (iv), since $f^\delta\in R(K)$, and $\phi^\delta\in \textbf{Q}_{\Gamma_u}$ solves (\ref{operatoreqnnoise}) or (\ref{normalequation}). Then applying \cite[Theorem 1]{CN19}, we have $\phi^\delta\in \mathcal{K}(K^*K;K^*f^\delta)$. As a result, by using Proposition \ref{converge1} (iii), and recognizing the relation of $\mathcal{K}(K^*K;K^*f^\delta)$	and $\mathbb{K}_k(K^*K;K^*f^\delta)$, we conclude that $\phi^\delta_k\to \phi^\delta$ in $\textbf{Q}_{\Gamma_u}$ as $k\to \infty$.\qed
\end{proof}

Applying Proposition \ref{prop:conv} (iv), in the case of noise-free data $f$ ($f^\delta$ with $\delta=0$), we have the convergence of $\phi_k$ ($\phi^\delta_k$ with $\delta=0$) to the exact solution $\phi^\dag$ ($\phi^\delta$ with $\delta=0$). However, according to Proposition \ref{prop:conv}  (iii), when $\delta>0$ and the data is incompatible, $\{\phi_k^\delta\}_{k\geq 1}$ blows up. In fact, in this case, the semi-convergence of $\phi^\delta$ to $\phi^\dag$ occurs (\cite[Theorem 3.1]{Jia}), and the iterative step $k$ must be chosen properly.

\subsection{Discrepancy principle and error estimation}

We formulate the regularization property of \textbf{Algorithm 1} as follows.
\begin{theorem}
	\label{thmRegu}
	Let Assumptions \ref{assump:2}--\ref{assumpProcedure} hold. Let $\phi_{k(\delta)}^\delta$ be constructed by the generalized GKB method with a stopping rule, such that $k(\delta)\to \infty$ as $\delta\to0$. Then, \textbf{Algorithm 1} yields a regularization algorithm, i.e. $\phi_{k(\delta)}^\delta \to \phi^\dag$ as $\delta\to0$.
\end{theorem}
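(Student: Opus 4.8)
The plan is to take the stopping rule to be the Morozov discrepancy principle: fixing $\tau>1$, let $k(\delta)$ be the first index with $\|K\phi_k^\delta-f^\delta\|_{0,\Omega}\le \tau C_f\,\delta$, where $C_f$ is the constant from (\ref{right-handsidebound}). First I would verify that this index is well defined and finite for every $\delta>0$. By Proposition \ref{prop:conv}(ii) the iterates $\{\phi_k^\delta\}$ form a minimizing sequence of $J^\delta$, so by Lemma \ref{objective1} the residuals obey $\|K\phi_k^\delta-f^\delta\|_{0,\Omega}^2=2J^\delta(\phi_k^\delta)\to\inf J^\delta=0$ as $k\to\infty$; since they are moreover nonincreasing (each iterate minimizes over nested Krylov subspaces, equivalently $|\bar\mu_{k+1}|=|s_k|\,|\bar\mu_k|\le|\bar\mu_k|$ from the Givens step in \textbf{Algorithm 1}), the positive threshold $\tau C_f\,\delta$ is crossed at a finite $k(\delta)$. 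That $k(\delta)\to\infty$ as $\delta\to0$ then follows because, for each fixed $k$, the residual tends as $\delta\to0$ to $\|K\phi_k-f\|_{0,\Omega}$, which is strictly positive unless $\phi^\dag$ already lies in the $k$-dimensional Krylov space, so no fixed index can meet a threshold shrinking to $0$.

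Next I would combine the stopping inequality with (\ref{right-handsidebound}) to control the residual against the \emph{exact} data,
\begin{equation*}
\|K\phi_{k(\delta)}^\delta-f\|_{0,\Omega}\le\|K\phi_{k(\delta)}^\delta-f^\delta\|_{0,\Omega}+\|f^\delta-f\|_{0,\Omega}\le(\tau+1)C_f\,\delta,
\end{equation*}
so that $K\phi_{k(\delta)}^\delta\to f=K\phi^\dag$ in $Q$ as $\delta\to0$. The main obstacle is now visible: since $K$ is compact with unbounded inverse, convergence of the residual does \emph{not} by itself give $\phi_{k(\delta)}^\delta\to\phi^\dag$. The difficulty is intrinsic to the method, because the Krylov subspaces $\mathbb{K}_k(K^*K;K^*f^\delta)$ depend on the noisy data and $\kappa_2(G_k)$ grows exponentially in $k$, so any fixed-$k$ perturbation bound of the form $\|\phi_k^\delta-\phi_k\|_{\textbf{Q}_{\Gamma_u}}\le C(k)\,\delta$ degrades too rapidly to be invoked directly at the variable index $k=k(\delta)$.

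To bypass an explicit stability constant I would argue by weak compactness together with injectivity. The decisive auxiliary step is to show that $\{\phi_{k(\delta)}^\delta\}$ remains bounded in $\textbf{Q}_{\Gamma_u}$ with $\limsup_{\delta\to0}\|\phi_{k(\delta)}^\delta\|_{\textbf{Q}_{\Gamma_u}}\le\|\phi^\dag\|_{\textbf{Q}_{\Gamma_u}}$; I would obtain this by exploiting the optimality of the GKB iterate within its Krylov subspace and comparing it with the noise-free iterate $\phi_{k(\delta)}$, which converges to $\phi^\dag$ by Proposition \ref{prop:conv}(iv) (valid since $f\in R(K)$) together with $k(\delta)\to\infty$. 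Granting the bound, every sequence $\delta_n\to0$ has a subsequence along which $\phi_{k(\delta_n)}^{\delta_n}\rightharpoonup\psi$ weakly; continuity of $K$ and the residual estimate force $K\psi=K\phi^\dag$, whence $\psi=\phi^\dag$ by the injectivity of $K$ (Proposition \ref{prop:K}(i)). The norm inequality $\limsup\|\phi_{k(\delta_n)}^{\delta_n}\|_{\textbf{Q}_{\Gamma_u}}\le\|\phi^\dag\|_{\textbf{Q}_{\Gamma_u}}=\|\psi\|_{\textbf{Q}_{\Gamma_u}}$ combined with weak convergence then upgrades to norm, hence strong, convergence in the Hilbert space $\textbf{Q}_{\Gamma_u}$; as the limit is the same $\phi^\dag$ for every subsequence, the whole family converges, giving $\phi_{k(\delta)}^\delta\to\phi^\dag$. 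I expect the hard part to be precisely the uniform bound $\limsup\|\phi_{k(\delta)}^\delta\|\le\|\phi^\dag\|$: establishing it is where the balance enforced by the discrepancy stopping rule must be played against the semi-convergence of the GKB iterates (cf. \cite[Theorem 3.1]{Jia}), ensuring that the iteration is halted before the noise amplification recorded by the blow-up alternative of Proposition \ref{prop:conv}(iii) takes over.
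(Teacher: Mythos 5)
First, a point of comparison: the paper states Theorem \ref{thmRegu} without any proof --- the hypothesis ``a stopping rule such that $k(\delta)\to\infty$'' is simply invoked again in Corollary \ref{Corollary:3.8} --- so your attempt cannot be matched against an argument in the text and must stand on its own. Its preliminary steps do stand: finiteness of $k(\delta)$ follows from Proposition \ref{prop:residual}, the divergence $k(\delta)\to\infty$ from the strict positivity (\ref{nonnegative}) of the noise-free residuals, the triangle inequality giving $\|K\phi^\delta_{k(\delta)}-f\|_{0,\Omega}\le(\tau+1)C_f\delta$ is correct, and so is the abstract Hilbert-space mechanism ``weak convergence plus $\limsup\|\cdot\|\le\|\phi^\dag\|$ implies strong convergence''.

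The genuine gap is the step you yourself flag as the hard one: the bound $\limsup_{\delta\to0}\|\phi^\delta_{k(\delta)}\|_{\textbf{Q}_{\Gamma_u}}\le\|\phi^\dag\|_{\textbf{Q}_{\Gamma_u}}$. You propose to obtain it ``by exploiting the optimality of the GKB iterate within its Krylov subspace and comparing it with the noise-free iterate'', but minimal-residual optimality gives no control of the norm: $\phi_k^\delta$ minimizes $\|K\phi-f^\delta\|_{0,\Omega}$ over $\mathbb{K}_k(K^*K;K^*f^\delta)$, not $\|\phi\|_{\textbf{Q}_{\Gamma_u}}$, and a minimal-residual element of a Krylov space built from noisy data can have arbitrarily large norm --- that is precisely the blow-up alternative of Proposition \ref{prop:conv}(iii) and the semi-convergence you cite. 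Moreover, the noise-free iterate $\phi_{k(\delta)}$ lives in a different subspace, $\mathbb{K}_k(K^*K;K^*f)$ rather than $\mathbb{K}_k(K^*K;K^*f^\delta)$, so it is not even an admissible competitor for the noisy minimization; any comparison must pass through a perturbation estimate of the type of Lemma \ref{lemmaParameter}, whose constant $C_4\,k!\,\delta$ is useless at the variable index $k=k(\delta)$ unless the growth of $k(\delta)$ in $\delta$ is also controlled (this is exactly what Assumption \ref{assump_mu} and Lemma \ref{kRate} are introduced for in the paper's rate theorem). For conjugate-gradient-type methods under the discrepancy principle, establishing such a norm (or error-monotonicity) bound is the substance of the entire convergence proof in the literature (Hanke's analysis via residual orthogonal polynomials); it cannot be dispatched in one sentence. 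As written, the proposal reduces the theorem to an unproved claim that is at least as hard as the theorem itself.
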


The Morozov discrepancy principle is applied for this purpose in this paper.

\noindent \textbf{Discrepancy principle:} Given $\tau>0$, implement \textbf{Algorithm 1} for $j=1,2,\cdots,k(\delta)$, with $k=k(\delta)$ being the smallest index, such that
\begin{equation}
	\label{dp}
	\|K\phi_k^\delta-f^\delta\|_{0,\Omega}=\vert\bar{\mu}_{k+1}\vert\leq \tau \delta.
\end{equation}

From  Proposition \ref{prop:conv} (ii),  $\lim\limits_{k\to \infty}\vert\bar{\mu}_{k}\vert=0$. In fact, we have a more precise result for the residual.
\begin{proposition}\label{prop:residual}
	For any $\delta\geq 0$, we denote by $\{\phi_j^\delta\}_{j\geq 1}$ the approximate solutions produced by \textbf{Algorithm 1}. Then the corresponding residuals $\{\vert\bar{\mu}_{j+1}\vert\}_{j\geq 1}$ are strictly monotonically decreasing and converge to 0 as $j\to \infty$.
\end{proposition}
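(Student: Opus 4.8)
The statement has two parts: strict monotone decrease of the residuals $\lvert\bar\mu_{j+1}\rvert$, and their convergence to zero. The plan is to exploit the explicit Givens recursion $\bar\mu_{j+1}=-s_j\bar\mu_j$ from \textbf{Algorithm 1} for the first part, and to identify the residual with the value of the functional $J^\delta$ for the second part. First I would record the two elementary facts on which everything rests: that $\lvert\bar\mu_{j+1}\rvert=\lVert K\phi_j^\delta-f^\delta\rVert_{0,\Omega}$ (established in the derivation preceding the algorithm and restated in the discrepancy principle), and that the rotation parameters satisfy $c_j^2+s_j^2=1$ with $c_j=\bar\tau_j/\tau_j$, $s_j=\gamma_{j+1}/\tau_j$ and $\tau_j=\sqrt{\bar\tau_j^2+\gamma_{j+1}^2}$, so that $0\le s_j\le 1$ and $\lvert\bar\mu_{j+1}\rvert=s_j\lvert\bar\mu_j\rvert$.

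For the strict decrease, the crux is to show $0<s_j<1$ for every $j$, which is equivalent to $\bar\tau_j>0$. I would prove $\bar\tau_j>0$ by induction: the base case is $\bar\tau_1=\beta_1>0$, since Assumption \ref{assumpProcedure} guarantees that the GKB process never terminates and hence every $\beta_j$ and $\gamma_j$ is strictly positive. For the inductive step, $\bar\tau_j>0$ forces $\tau_j\ge\bar\tau_j>0$ and hence $c_j=\bar\tau_j/\tau_j>0$, so that $\bar\tau_{j+1}=c_j\beta_{j+1}>0$. With $\bar\tau_j>0$ one gets $\tau_j>\gamma_{j+1}$ strictly, whence $s_j=\gamma_{j+1}/\tau_j<1$, while $\gamma_{j+1}>0$ gives $s_j>0$. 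Since $\bar\mu_1=\gamma_1=\lVert f^\delta\rVert_{0,\Omega}>0$ (again $\gamma_1>0$ by non-termination), a second trivial induction through $\lvert\bar\mu_{j+1}\rvert=s_j\lvert\bar\mu_j\rvert$ shows every $\lvert\bar\mu_j\rvert>0$, and the strict inequality $s_j<1$ then yields $\lvert\bar\mu_{j+1}\rvert<\lvert\bar\mu_j\rvert$.

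For convergence to zero, I would first note that the strictly decreasing, nonnegative sequence $\lvert\bar\mu_{j+1}\rvert$ automatically converges to some limit $r\ge0$, so it remains only to identify $r=0$. The decisive observation is that $J^\delta(\phi)=\tfrac12\lVert K\phi-f^\delta\rVert_{0,\Omega}^2$: indeed $u_2=\tilde u_2+\hat u_2^\delta=K\phi-f^\delta$ by the decomposition of the state together with the definition $f^\delta=-\hat u_2^\delta$. Consequently $J^\delta(\phi_j^\delta)=\tfrac12\lvert\bar\mu_{j+1}\rvert^2$. By Proposition \ref{prop:conv}(ii) the sequence $\{\phi_j^\delta\}$ is a minimizing sequence of $J^\delta$, and by Lemma \ref{objective1} the infimum of $J^\delta$ is zero; hence $J^\delta(\phi_j^\delta)\to0$, which forces $\lvert\bar\mu_{j+1}\rvert\to0$ and thus $r=0$.

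The only genuinely delicate point is the strict inequality $s_j<1$, i.e. the positivity $\bar\tau_j>0$, and this is exactly where Assumption \ref{assumpProcedure} is indispensable: were the bidiagonalization to break down ($\beta_j=0$ for some $j$) one could have $\bar\tau_j=0$, $s_j=1$, and a residual plateau. Everything else is bookkeeping with the Givens formulas together with the two already-proven results. I would therefore concentrate the care on cleanly invoking the non-termination hypothesis and carrying the double induction, and treat the identification $J^\delta=\tfrac12\lVert K\cdot-f^\delta\rVert_{0,\Omega}^2$ and the minimizing-sequence conclusion as short appeals to Proposition \ref{prop:conv}(ii) and Lemma \ref{objective1}.
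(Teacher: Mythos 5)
Your proposal is correct and follows essentially the same route as the paper: the paper likewise establishes strict monotonicity by an induction giving $0<\bar{\tau}_j\leq\beta_j$, hence $0<s_j<1$ and $\vert\bar{\mu}_{j+1}\vert=s_j\vert\bar{\mu}_j\vert<\vert\bar{\mu}_j\vert$, and it obtains the convergence $\vert\bar{\mu}_k\vert\to0$ exactly as you do, from Proposition \ref{prop:conv}(ii) together with Lemma \ref{objective1} (the paper states this just before the proposition and then remarks that only the monotonicity needs proof). Your additional care in isolating where Assumption \ref{assumpProcedure} enters (positivity of all $\beta_j,\gamma_j$) is consistent with, and slightly more explicit than, the paper's argument.
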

\begin{proof}
	We only need to prove the monotonicity. Recall that $\beta_j, \gamma_j>0$ for all $j\in \mathbb{N}$. Then, by using mathematical induction, it is easy to get $0<\bar{\tau}_j\leq \beta_j$ for all $j\in \mathbb{N}$. Therefore, $0< c_j <1, 0<s_j <1$. From the iteration, it holds true that for $j\geq 1$,
	\[
	\vert\bar{\mu}_{j+1}\vert = s_j\vert\bar{\mu}_j\vert<\vert\bar{\mu}_j\vert,
	\]
	which gives the strict monotonicity. Moreover, with the mathematical induction again, we have
	\begin{equation}
		\vert\bar{\mu}_j\vert>0, \ j\geq 1.\label{nonnegative}
	\end{equation}\qed
\end{proof}

\begin{corollary}\label{Corollary:3.8}
	With a fixed positive parameter $\tau$, then the iteration index $k(\delta)$ determined by the discrepancy principle satisfies
	\[
	k(\delta)\to \infty\ \textrm{as} \ \delta\to 0.
	\]
	Consequently, \textbf{Algorithm 1} with the discrepancy principle stopping rule (\ref{dp}) yields a regularization algorithm.
\end{corollary}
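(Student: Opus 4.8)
The plan is to verify the single hypothesis of Theorem~\ref{thmRegu} — namely that $k(\delta)\to\infty$ as $\delta\to 0$ — and then read off the regularization conclusion directly from that theorem. Before anything else I would note that the stopping index $k(\delta)$ is well defined for every $\delta>0$: by Proposition~\ref{prop:residual} the residuals $\{|\bar\mu_{j+1}|\}_{j\ge 1}$ are strictly monotonically decreasing and tend to $0$, so since $\tau\delta>0$ there is a smallest index at which the residual first falls below $\tau\delta$.

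The core of the argument is a proof by contradiction combined with a continuous-dependence observation. Suppose $k(\delta)\not\to\infty$; then there is a sequence $\delta_n\to 0$ and a fixed integer $N$ with $k(\delta_n)\le N$ for all $n$, and after passing to a subsequence we may assume $k(\delta_n)\equiv k_0$ for some fixed $k_0$. The defining property (\ref{dp}) of the discrepancy principle then forces
\[
|\bar\mu_{k_0+1}(\delta_n)|\le \tau\delta_n\longrightarrow 0 .
\]
On the other hand, I would track the $\delta$-dependence of the first $k_0+1$ GKB quantities. Since $f^\delta\to f$ in $Q$ by (\ref{right-handsidebound}), and the recursion (\ref{gkb}) together with the Givens updates in \textbf{Algorithm 1} is built from the bounded operators $K,K^*$, finitely many norm evaluations and normalizations, each of the scalars $\gamma_j,\beta_j,c_j,s_j,\tau_j,\bar\mu_j$ for $j\le k_0+1$ is a continuous function of the data, provided the process is non-degenerate at $f$. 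Non-degeneracy for the exact data holds because $\rho_k:=|\bar\mu_{k+1}(0)|>0$ for all $k$, which is exactly the strict positivity furnished by Proposition~\ref{prop:residual} with $\delta=0$. Hence $|\bar\mu_{k_0+1}(\delta_n)|\to |\bar\mu_{k_0+1}(0)|=\rho_{k_0}>0$, contradicting the previous display. Therefore $k(\delta)\to\infty$.

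Having established $k(\delta)\to\infty$, the regularization statement is immediate: Assumptions~\ref{assump:2}--\ref{assumpProcedure} hold by hypothesis, so Theorem~\ref{thmRegu} applies to the stopping rule (\ref{dp}) and yields $\phi_{k(\delta)}^\delta\to\phi^\dag$ as $\delta\to 0$, i.e.\ \textbf{Algorithm 1} equipped with the discrepancy principle is a regularization method.

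I expect the main obstacle to be making the continuous-dependence step fully rigorous in the infinite-dimensional setting. One must argue that, for a fixed finite number of steps, the whole chain $f^\delta\mapsto(v_j,p_j,\beta_j,\gamma_j)\mapsto|\bar\mu_{k_0+1}|$ is continuous at $f$; this rests on the boundedness of $K$ and $K^*$ and on the non-vanishing of the normalization denominators near $f$, the latter being precisely where Proposition~\ref{prop:residual} (with $\delta=0$) is needed. A secondary point worth stating explicitly is that the noise-free finite-step residual $\rho_{k_0}$ is strictly positive — intuitively, the exact solution $\phi^\dag$ is not captured by any finite Krylov subspace under Assumption~\ref{assumpProcedure} — so that the limit $\rho_{k_0}>0$ genuinely conflicts with $\tau\delta_n\to 0$.
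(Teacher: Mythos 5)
Your proposal is correct and follows essentially the route the paper intends: Remark~\ref{remark:3.9} indicates that the claim follows by combining the stopping rule (\ref{dp}), the strict positivity (\ref{nonnegative}) of the noise-free residuals, and Proposition~\ref{prop:residual}, which is exactly the skeleton of your contradiction argument. The continuous-dependence step you rightly single out as the crux --- that for a fixed finite number of steps the quantities $\gamma_j,\beta_j,v_j,p_j$ and hence $\bar\mu_{k_0+1}$ depend continuously on $f^\delta$ --- is supplied quantitatively by the paper's Lemma~\ref{lemmaParameter} (the bound $\vert\gamma_k^\delta-\gamma_k\vert+\|v_k^\delta-v_k\|_{0,\Omega}+\vert\beta_k^\delta-\beta_k\vert+\|p_k^\delta-p_k\|_{\textbf{Q}_{\Gamma_u}}\leq C_4\,k!\,\delta$), so your argument can be closed verbatim with that lemma.
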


\begin{remark}\label{remark:3.9}
	For the conventional regularization methods, we need to assume that $k(\delta)\to \infty \textrm{ as } \delta\to 0$ holds. However, this condition does not need to be assumed for the method in this paper. Instead it can be proved by combining (\ref{dp})-(\ref{nonnegative}) and Proposition \ref{prop:residual}.
\end{remark}

Now we are in a position to present the error estimates. To this end, assume there exists $\psi^\dag\in \textbf{Q}_{\Gamma_u}$ such that the following source condition about the exact solution holds:
\begin{equation}
	\label{sourcecondition}
	(K^*K)^\nu \psi^\dag = \phi^\dag, \ \nu>0.
\end{equation}
In the case $\nu=1$, the source condition (\ref{sourcecondition}) means there exists $\psi^\dag\in \textbf{Q}_{\Gamma_u}$ such that $\phi^\dag=w_2^\dag+i\, w_1^\dag\vert_{\Gamma_u}$ with $\textbf{w}^\dag=w_1^\dag+i\,w_2^\dag\in \textbf{V}$ being the weak solution of the problem
\begin{eqnarray*}
	\left\{
	\begin{array}{ll}
		- \Delta \textbf{w}^\dag = u_2^\dag & \textrm{in}\ \Omega,\\
		\partial_n \textbf{w}^\dag + i\,\textbf{w}^\dag = 0 & \textrm{on}\ \Gamma,
	\end{array} \right.
\end{eqnarray*}
and $\textbf{u}^\dag=u_1^\dag+i u_2^\dag\in \textbf{V}$ being the weak solution of the problem
\begin{eqnarray*}
	\left\{
	\begin{array}{ll}
		- \Delta \textbf{u}^\dag = 0 & \textrm{in}\ \Omega,\\
		\partial_n \textbf{u}^\dag + i\,\textbf{u}^\dag = 0 & \textrm{on}\ \Gamma _m, \\
		\partial_n \textbf{u}^\dag + i\,\textbf{u}^\dag = \psi^\dag & \textrm{on}\ \Gamma _u.
	\end{array} \right.
\end{eqnarray*}

For noise-free data $(\Phi,T)$ with $\delta=0$, we denote using $\phi_k=\phi_k^0\in \textbf{Q}_{\Gamma_u}$ the unique solutions to the problems (\ref{proj}). Then, under the assumption (\ref{sourcecondition}), and noticing the injectivity of the operator $K: \textbf{Q}_{\Gamma_u}\to Q$, by use of \cite[Equation (3.13')]{NP}, we obtain
\begin{equation}
	\|\phi_k-\phi^\dag\|_{\textbf{Q}_{\Gamma_u}}\leq C_1 \|\psi^\dag\|_{\textbf{Q}_{\Gamma_u}}\min_{0\leq j\leq k}\frac{\sigma_{j+1}^{2\nu}}{(k-j+1)^{2\nu}},\label{errorestimate1}
\end{equation}
where $C_1>0$ is a constant and $\sigma_{j}$ represents the $j$th singular value of $K$. From Proposition \ref{prop:K}(iii), the singular values $\{\sigma_j\}_{j=1}^{+\infty}$ decay to zero in an exponential way which indicates there exist two constants $C_2>0, 0<\rho<1$ such that
\[
\sigma_j\leq C_2 \rho^j.
\]
Then the error estimate (\ref{errorestimate1}) reduces further, to
\[
\|\phi_k-\phi^\dag\|_{\textbf{Q}_{\Gamma_u}}\leq C_1 \|\psi^\dag\|_{\textbf{Q}_{\Gamma_u}}  M(k,\rho)^{2\nu},
\]
where
\[
M(k,\rho)=\min_{0\leq j\leq k} \frac{\rho^{j+1}}{k-j+1}\leq\min_{0\leq j\leq k}\rho^{j+1}= \rho^{k+1}\to 0, \quad k\to \infty.
\]

\begin{lemma}\label{lemmaParameter}
	Let the sets of parameters $\{\gamma_k, v_k, \beta_k, p_k\}$ and $\{\gamma_k^\delta, v_k^\delta, \beta_k^\delta, p_k^\delta\}$ be generated by \textbf{Algorithm 1} applied to noise-free data $f$ and noisy data $f^\delta$, respectively. There exists a positive constant $C_3 > 0, \theta\in(0,1/2)$ such that
	\begin{equation}
		\label{smallGammaBeta}
		\min \left( \gamma_k, \beta_k \right) \geq  C_3 k^{-\theta},\ k = 1,2,\cdots.
	\end{equation}
	Then, a constant $C_4>0$ exists such that
	\begin{equation}
		\label{lemmaParameterIneq}
		\vert \gamma_{k}^\delta - \gamma_{k} \vert + \| v_{k}^\delta - v_{k} \|_{0,\Omega} + \vert \beta_{k}^\delta - \beta_{k} \vert + \| p_{k}^\delta - p_{k} \|_{\textbf{Q}_{\Gamma_u}} \leq C_4 k! \delta.
	\end{equation}
\end{lemma}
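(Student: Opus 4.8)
The plan is to prove (\ref{lemmaParameterIneq}) by induction on $k$, propagating the data perturbation through the coupled recursion (\ref{gkb}). I introduce the scalar errors $\Delta\gamma_k=|\gamma_k^\delta-\gamma_k|$, $\Delta\beta_k=|\beta_k^\delta-\beta_k|$, the function errors $\Delta v_k=\|v_k^\delta-v_k\|_{0,\Omega}$, $\Delta p_k=\|p_k^\delta-p_k\|_{\textbf{Q}_{\Gamma_u}}$, and their sum $D_k=\Delta\gamma_k+\Delta v_k+\Delta\beta_k+\Delta p_k$. For the base case $k=1$, the estimate $\Delta\gamma_1\le\|f^\delta-f\|_{0,\Omega}\le C_f\delta$ follows from (\ref{right-handsidebound}) and the reverse triangle inequality, while $\Delta v_1,\Delta\beta_1,\Delta p_1$ are handled by the normalization estimate below; together these give $D_1\le C\delta$, which is (\ref{lemmaParameterIneq}) for $k=1$.

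The central tool for the two normalization steps (the passages $w\mapsto w/\|w\|$ that define $v_{j+1}$ and $p_{j+1}$) is the elementary bound $\| a/\|a\| - b/\|b\| \|\le 2\|a-b\|/\|a\|$, valid for any nonzero $a,b$ in a Hilbert space. Applying it with $a$ the noise-free numerator lets me divide by the noise-free quantities $\gamma_{j+1}$ or $\beta_{j+1}$ \emph{only}, so the hypothesis (\ref{smallGammaBeta}) yields $1/\gamma_{j+1}\le C_3^{-1}(j+1)^{\theta}$ and $1/\beta_{j+1}\le C_3^{-1}(j+1)^{\theta}$ directly; in particular I never need a separate lower bound on the noisy $\gamma_{j+1}^\delta,\beta_{j+1}^\delta$, which removes the chicken-and-egg coupling between ``error is small'' and ``denominators stay away from zero.''

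For the inductive step I subtract the noise-free and noisy versions of each line of (\ref{gkb}) and estimate term by term, using the boundedness of $K,K^*$, the unit norms $\|v_j\|_{0,\Omega}=\|p_j\|_{\textbf{Q}_{\Gamma_u}}=1$, and the uniform boundedness of $\{\gamma_j\},\{\beta_j\}$ (they even tend to zero by Lemma \ref{prop:Gk}, so the noisy ones are $O(1)$ in the small-$\delta$ regime). Each numerator difference is bounded by $C D_j$, so $\Delta\gamma_{j+1}\le C D_j$ and, after dividing by $\gamma_{j+1}$, $\Delta v_{j+1}\le C(j+1)^{\theta}D_j$. Feeding $\Delta v_{j+1}$ into the $\beta$- and $p$-recursions, the numerator error for $p_{j+1}$ already carries one factor $(j+1)^{\theta}$, and dividing by $\beta_{j+1}$ contributes a second, producing $\Delta p_{j+1}\le C(j+1)^{2\theta}D_j$. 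Collecting the four bounds gives the one-step recursion $D_{j+1}\le C(j+1)^{2\theta}D_j$.

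Unrolling from $D_1\le C\delta$ yields $D_k\le C^{k}\big(\prod_{j=2}^{k}j^{2\theta}\big)\delta=C^{k}(k!)^{2\theta}\delta$, and since $\theta<1/2$ gives $1-2\theta>0$, the factorial beats the geometric growth: $C^{k}(k!)^{2\theta}\le C_4\,k!$ for a suitable $C_4$, because $(k!)^{1-2\theta}$ eventually dominates $C^{k}$. This is exactly (\ref{lemmaParameterIneq}). I expect the main obstacle to be precisely this amplification accounting, and it also explains the hypothesis $\theta<1/2$ rather than merely $\theta<1$: the coupling of the $v$- and $p$-recursions forces a division by two vanishing quantities per iteration, giving the amplification exponent $2\theta$, and keeping the accumulated product $(k!)^{2\theta}$ below $k!$ is exactly what $2\theta<1$ guarantees. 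The one point demanding care is verifying that the noisy quantities stay $O(1)$, so that the hidden constants do not themselves grow with $j$; this holds in the small-$\delta$ regime in which the estimate is actually used.
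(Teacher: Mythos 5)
Your proposal is correct and follows essentially the same route as the paper's Appendix A.5: induction on $k$, term-by-term propagation through the recursion (\ref{gkb}), division only by the noise-free denominators via (\ref{smallGammaBeta}), and the observation that the per-step amplification $O(k^{2\theta})$ with $2\theta<1$ is absorbed by the factorial. The only (immaterial) difference is how the induction is closed: you unroll to $C^k(k!)^{2\theta}\delta$ and absorb $C^k$ into $(k!)^{1-2\theta}$, whereas the paper keeps a single constant $C_4$ by choosing $k_0$ so that the amplification factor falls below $1$ for $k>k_0$ and treats $k\le k_0$ separately.
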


The proof is technical, and is, therefore, detailed in Appendix A.5. The assumption (\ref{smallGammaBeta}) can be empirically verified on the left of Fig. \ref{Fig: assumppf}.
\begin{figure}[H]
	\subfigure[]{
		\begin{minipage}[t]{0.47\textwidth}
			\centering
			\includegraphics[height=0.24 \textheight]{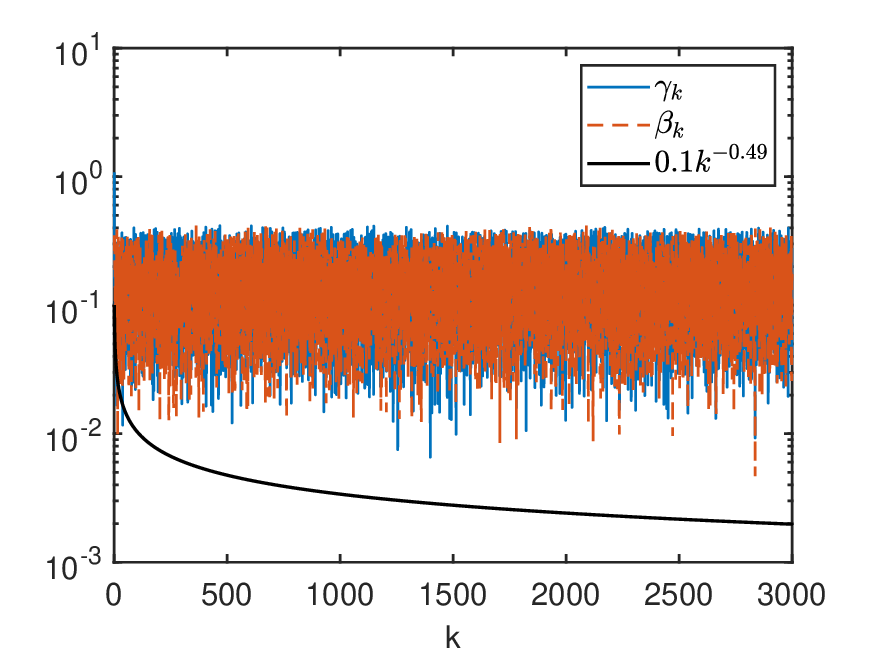}
		\end{minipage}
	}
	\subfigure[]{
		\begin{minipage}[t]{0.45\textwidth}
			\centering
			\includegraphics[height=0.24 \textheight]{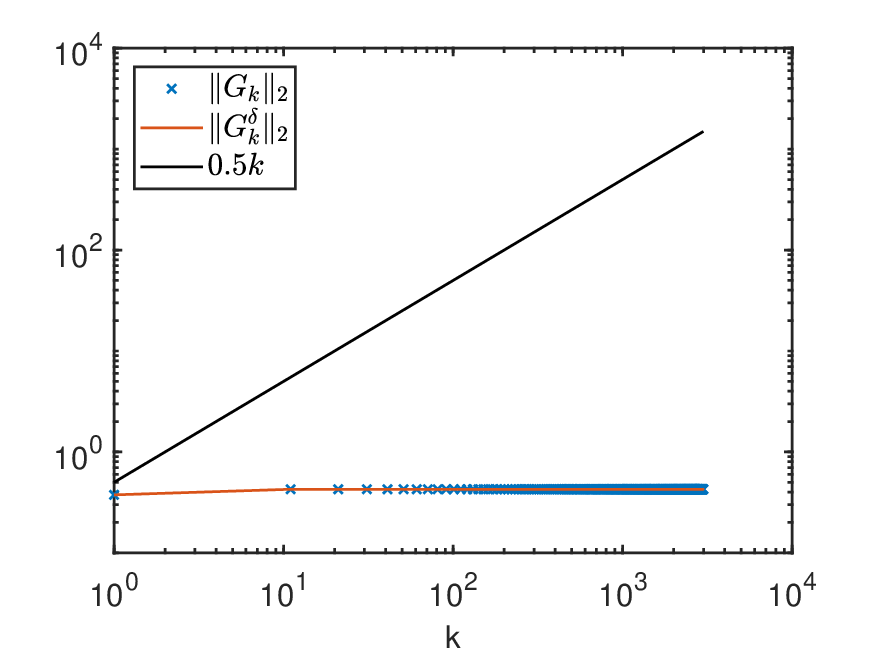}
		\end{minipage}
	}
	\caption{The numerical behaviors of quantities $\gamma_k, \beta_k, 0.1k^{-0.49}, \|G_k\|_2, \|G_k^\delta\|_2, 0.5k$ for different iterative steps $k$ in Example 1 in Section 4.2.1. (a) Numerical verification of inequality (\ref{smallGammaBeta}) with $\theta=0.49$ and $C_3=0.1$. (b) Numerical verification of Assumption \ref{assumpG} with $C_G=0.5$.}\label{Fig: assumppf}
\end{figure}

\begin{assumption}\label{assumpG}
	For all $k$, $\textrm{rank}(G_k)=\textrm{rank}(G_k^\delta)$, and there exists a constant $C_G$ such that $\max\{\|G_k\|_2,\|G_k^\delta\|_2\}\leq C_G k$.
\end{assumption}

As can be seen from the right side of Fig. \ref{Fig: assumppf}, Assumption \ref{assumpG} is quite weak in practice.
\begin{lemma}\label{ginverse}
	Let Assumption \ref{assumpG} hold, and the matrices $G_k^\delta$ as well as $G_k$ be determined by \textbf{Algorithm 1} applied to $f^\delta$ and $f$, respectively. Then, there is a constant $C_4$ satisfying
	\begin{equation}\label{matrixerror}
		\|(G_k^\delta)^\dagger - G_k^\dagger\|_2 \leq 2\sqrt{2}C^2_G C_4 (k+3)!\delta.
	\end{equation}
\end{lemma}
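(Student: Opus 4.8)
The plan is to treat $G_k^\delta$ as a perturbation of $G_k$ and to invoke a perturbation bound for the Moore--Penrose pseudoinverse. The decisive structural input is Assumption \ref{assumpG}: since $\mathrm{rank}(G_k)=\mathrm{rank}(G_k^\delta)$ and both matrices are $(k+1)\times k$ lower bidiagonal with strictly positive generating scalars (hence of full column rank $k$), a Wedin-type perturbation inequality applies with the sharp constant $\sqrt{2}$ reserved for the full-rank case, namely
\[
\|(G_k^\delta)^\dagger-G_k^\dagger\|_2\le \sqrt{2}\,\|G_k^\dagger\|_2\,\|(G_k^\delta)^\dagger\|_2\,\|G_k^\delta-G_k\|_2.
\]
Everything then reduces to estimating the three factors on the right-hand side, and the rank-equality hypothesis needed to license this inequality is exactly what Assumption \ref{assumpG} supplies, so it need not be re-proved.

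First I would bound $\|G_k^\delta-G_k\|_2$. By the definition (\ref{matrixG}) the two matrices differ only through the diagonal entries $\beta_j$ and the subdiagonal entries $\gamma_{j+1}$, so $E:=G_k^\delta-G_k$ is itself lower bidiagonal, with diagonal $\beta_j^\delta-\beta_j$ and subdiagonal $\gamma_{j+1}^\delta-\gamma_{j+1}$. Since every row and every column of a bidiagonal matrix has at most two nonzero entries, the estimate $\|E\|_2\le\sqrt{\|E\|_1\|E\|_\infty}$ gives $\|E\|_2\le 2\max_j\{|\beta_j^\delta-\beta_j|,\,|\gamma_{j+1}^\delta-\gamma_{j+1}|\}$. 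Now Lemma \ref{lemmaParameter} bounds each such entry difference by $C_4\,j!\,\delta$; because the subdiagonal index runs up to $k+1$, the largest of these is $C_4\,(k+1)!\,\delta$, whence $\|G_k^\delta-G_k\|_2\le 2C_4\,(k+1)!\,\delta$.

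Next I would bound the product $\|G_k^\dagger\|_2\,\|(G_k^\delta)^\dagger\|_2$, which is the heart of the matter. As $G_k$ has full column rank, $\|G_k^\dagger\|_2=\sigma_{\min}(G_k)^{-1}$ (and likewise for $G_k^\delta$), so the task is to bound the smallest singular values from below. Here I would play the upper bound $\|G_k\|_2\le C_G k$ of Assumption \ref{assumpG} off against the lower bound (\ref{smallGammaBeta}) on the scalars $\beta_k,\gamma_k$ in order to control the conditioning of the bidiagonal factors, aiming to reach an estimate of the form $\|G_k^\dagger\|_2\,\|(G_k^\delta)^\dagger\|_2\le C_G^2\,(k+2)(k+3)$. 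Substituting this and the Step~1 bound into the Wedin inequality and collapsing $(k+1)!\,(k+2)(k+3)=(k+3)!$ produces
\[
\|(G_k^\delta)^\dagger-G_k^\dagger\|_2\le 2\sqrt{2}\,C_G^2\,C_4\,(k+3)!\,\delta,
\]
which is the asserted bound (\ref{matrixerror}).

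I expect the pseudoinverse-norm estimate of Step~2 to be the main obstacle. The matrices $G_k$ are severely ill-conditioned — the paper's own computation gives $\kappa_2(G_k)\to\infty$ — so the crude spectral estimate $\sigma_{k,k}\le\sqrt{\beta_k^2+\gamma_{k+1}^2}$ recorded earlier yields only an upper bound on $\sigma_{\min}(G_k)$ and would make $\|G_k^\dagger\|_2$ grow far faster than any polynomial in $k$, destroying the factorial bound. The delicate point is therefore to use Assumption \ref{assumpG} and the lower bound (\ref{smallGammaBeta}) \emph{in tandem} to force a genuinely polynomial growth $\|G_k^\dagger\|_2=O(k)$; reconciling this with the ill-conditioning is precisely where the two assumptions earn their keep. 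Once that estimate is in hand, the remaining bookkeeping of constants and factorials is routine.
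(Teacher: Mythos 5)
Your overall route is the same as the paper's: both proofs rest on the rank-preserving perturbation bound $\|B^\dagger-A^\dagger\|_2\le\sqrt{2}\,\|A^\dagger\|_2\|B^\dagger\|_2\|B-A\|_2$, with the rank hypothesis supplied by Assumption \ref{assumpG}, followed by the estimate $\|G_k^\delta-G_k\|_2\le 2C_4(k+1)!\,\delta$ drawn from Lemma \ref{lemmaParameter}. Your derivation of that last estimate (entrywise, via the bidiagonal structure and $\|E\|_2\le\sqrt{\|E\|_1\|E\|_\infty}$) differs from the paper's, which instead writes $G_k=V_{k+1}^*KP_k$ using (\ref{gkbmatix}) and perturbs the factors $V_{k+1}$ and $P_k$; both give the same bound, and yours is, if anything, more elementary.

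The step at which you stall --- bounding $\|G_k^\dagger\|_2\,\|(G_k^\delta)^\dagger\|_2$ by $C_G^2(k+2)(k+3)$ --- is not something the paper proves either. The appendix simply states that combining the two displayed inequalities completes the proof, i.e.\ it reads Assumption \ref{assumpG} as furnishing $\|G_k^\dagger\|_2,\|(G_k^\delta)^\dagger\|_2\le C_G k$ directly; this reading is confirmed by the proof of the convergence-rate theorem, where $\|(G_k^\delta)^\dagger\|_2\,C_f\delta$ is replaced by $C_fC_Gk\,\delta$ with no intermediate argument. Your instinct that a polynomial lower bound on $\sigma_{\min}(G_k)$ cannot be extracted from $\|G_k\|_2\le C_Gk$ together with (\ref{smallGammaBeta}) is correct --- the exponential growth of $\kappa_2(G_k)$ established in Section \ref{sec:gkb} rules it out --- so the only consistent reading is that the bound is \emph{imposed} on the pseudoinverses (the norms displayed in the assumption are evidently missing daggers), not derived. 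With that reading your Step 2 becomes an appeal to the hypothesis, the arithmetic $k^2(k+1)!\le(k+3)!$ closes the constant, and your argument is complete and coincides with the paper's; as written, however, you should not expect to prove the pseudoinverse estimate, and the lemma as stated is not deducible from the assumption taken literally.
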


\begin{assumption}\label{assump_mu}
	There exist three constants $C_a, C_b>0, p>1$ and $\sigma\geq p/(2\nu\ln(\rho^{-1}))$ such that
	\begin{equation}\label{muEstimate}
		C_a e^{-e^{k/\sigma}} \leq \vert\bar{\mu}_{k}\vert \leq C_b e^{-p k \ln k}.
	\end{equation}
\end{assumption}

The lower bound in (\ref{muEstimate}) is an extremely weak assumption of the behavior of residual error $\|K\phi_k^\delta-f^\delta\|_{0,\Omega}=\vert\bar{\mu}_{k+1}\vert$.

\begin{lemma}\label{kRate}
	Under Assumption \ref{assump_mu}, there exists a constant $C_5$ such that
	\begin{equation}\label{kinequality}
		(k+3)!\leq C_5 \vert\bar{\mu}_{k}\vert^{-1} \ln^{-p} (\vert\bar{\mu}_{k}\vert^{-1}), \quad \rho^{2\nu(k+1)} \leq C_5 \ln^{-p} (\vert\bar{\mu}_{k+1}\vert^{-1}).
	\end{equation}
\end{lemma}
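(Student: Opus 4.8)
The plan is to prove the two inequalities in \eqref{kinequality} separately, treating them as elementary consequences of the two-sided bounds on $\vert\bar{\mu}_k\vert$ supplied by Assumption \ref{assump_mu}. The overall strategy is to translate each factorial/exponential quantity on the left into a bound involving $\vert\bar{\mu}_k\vert^{-1}$ and $\ln(\vert\bar{\mu}_k\vert^{-1})$, using the \emph{lower} bound $\vert\bar{\mu}_k\vert\geq C_a e^{-e^{k/\sigma}}$ to control $\ln(\vert\bar{\mu}_k\vert^{-1})$ from above (which gives $\ln^{-p}$ a lower bound, hence a useful denominator) and the \emph{upper} bound $\vert\bar{\mu}_k\vert\leq C_b e^{-pk\ln k}$ to produce the decaying factors $\vert\bar{\mu}_k\vert^{-1}$ and $\rho^{2\nu(k+1)}$.

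For the first inequality I would start from the upper bound in \eqref{muEstimate}, which yields $\vert\bar{\mu}_k\vert^{-1}\geq C_b^{-1} e^{pk\ln k}=C_b^{-1}k^{pk}$. Since Stirling-type estimates give $(k+3)!\leq C\, k^{k}$ (up to a polynomial and exponential factor absorbed into the constant), and since $p>1$, the factor $k^{pk}$ dominates $(k+3)!$ with room to spare; the surplus $k^{(p-1)k}$ is precisely what will absorb the $\ln^{-p}$ term. To make this rigorous I would take logarithms: from the lower bound, $\ln(\vert\bar{\mu}_k\vert^{-1})\leq \ln C_a^{-1}+e^{k/\sigma}$, so $\ln^{-p}(\vert\bar{\mu}_k\vert^{-1})\geq C\, e^{-pk/\sigma}$ for large $k$. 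It then suffices to check that $(k+3)!\, e^{pk/\sigma}\leq C_5\,\vert\bar{\mu}_k\vert^{-1}=C_5 C_b^{-1}e^{pk\ln k}$, i.e. that $\ln(k+3)!+pk/\sigma\leq pk\ln k+O(1)$, which holds because $\ln(k+3)!\sim k\ln k$ and $pk/\sigma$ is only linear in $k$; the condition $p>1$ guarantees the gap $pk\ln k - k\ln k=(p-1)k\ln k$ swamps the linear term for $k$ large, and the finitely many small $k$ are absorbed into $C_5$.

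For the second inequality the key is the hypothesis $\sigma\geq p/(2\nu\ln(\rho^{-1}))$, equivalently $2\nu\ln(\rho^{-1})\geq p/\sigma$. I would again pass to logarithms: $\rho^{2\nu(k+1)}=e^{-2\nu(k+1)\ln(\rho^{-1})}$, while the lower bound on $\vert\bar{\mu}_{k+1}\vert$ gives $\ln^{-p}(\vert\bar{\mu}_{k+1}\vert^{-1})\geq C\, e^{-p(k+1)/\sigma}$. The desired bound $\rho^{2\nu(k+1)}\leq C_5\,\ln^{-p}(\vert\bar{\mu}_{k+1}\vert^{-1})$ then reduces to comparing the exponents, i.e. to verifying $2\nu\ln(\rho^{-1})\geq p/\sigma$, which is exactly the standing assumption; the constant $C_5$ absorbs the $\ln C_a^{-1}$ term and the finitely many initial indices.

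The main obstacle I anticipate is bookkeeping rather than conceptual: one must be careful that the single constant $C_5$ can be chosen to serve \emph{both} inequalities simultaneously and uniformly over all $k\geq1$, not merely asymptotically. The asymptotic comparisons are driven by $p>1$ (first estimate) and by the spectral compatibility $\sigma\geq p/(2\nu\ln(\rho^{-1}))$ (second estimate), but both estimates involve the same lower bound $e^{-e^{k/\sigma}}$ on the residual, whose double-exponential decay must not be allowed to make $\ln^{-p}$ too small; the hypothesis that the lower bound is only double-exponential (rather than, say, a tower) is what keeps $\ln(\vert\bar{\mu}_k\vert^{-1})$ merely single-exponential in $k$ and hence keeps $e^{-pk/\sigma}$ comparable to the other exponential factors. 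I would therefore handle the two logarithmic reductions in parallel, fix constants on the finite range of small $k$ by compactness, and take the larger of the two resulting $C_5$'s.
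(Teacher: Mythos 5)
Your proposal is correct, and your treatment of the second inequality is exactly the paper's: bound $\ln(\vert\bar{\mu}_{k+1}\vert^{-1})\leq C_8 e^{(k+1)/\sigma}$ from the lower bound in (\ref{muEstimate}), then use $\sigma\geq p/(2\nu\ln(\rho^{-1}))$ to compare exponents. For the first inequality, however, you take a genuinely different (though equally valid) route. You invoke \emph{both} halves of Assumption \ref{assump_mu}: the upper bound to get $\vert\bar{\mu}_k\vert^{-1}\geq C_b^{-1}k^{pk}$ and the lower bound to get $\ln^{-p}(\vert\bar{\mu}_k\vert^{-1})\geq C_8^{-p}e^{-pk/\sigma}$, and then you let the surplus $k^{(p-1)k}$ coming from $p>1$ absorb the $e^{pk/\sigma}$ factor. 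The paper instead proves the first inequality using \emph{only} the upper bound $\vert\bar{\mu}_k\vert\leq C_b e^{-pk\ln k}$: it writes $k\ln k\leq \frac{1}{p}\ln C_b+\frac{1}{p}\ln(\vert\bar{\mu}_k\vert^{-1})$ and then applies the elementary inequality $\frac{p-1}{p}\ln x> p\ln\ln x-p\ln\frac{p^2}{p-1}$ (valid for $p>1$) with $x=\vert\bar{\mu}_k\vert^{-1}$ to convert the surplus $(1-\frac1p)\ln(\vert\bar{\mu}_k\vert^{-1})$ directly into the needed $p\ln\ln(\vert\bar{\mu}_k\vert^{-1})$ term, finishing with the same Stirling bound $(k+3)!\leq C k^k$ that you use. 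The paper's version is slightly stronger in that the double-exponential lower bound plays no role in the first estimate, whereas your version uses the two hypotheses symmetrically across both estimates; both arguments close the same way, by absorbing finitely many small $k$ into the constant and taking $C_5$ as a maximum.
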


The proofs of Lemmas \ref{ginverse} and \ref{kRate} can be found in Appendix A.6 and Appendix A.7 respectively. Finally, based on Lemmas \ref{lemmaParameter}-\ref{kRate}, we show a convergence rate for the regularized solution $\phi_{k(\delta)}^\delta$.

\begin{theorem}
	Let Assumptions \ref{assump:2}-\ref{assump_mu} and the source condition (\ref{sourcecondition}) hold. Let $\phi_{k(\delta)}^\delta$ be constructed by the generalized GKB method with stopping rule (\ref{dp}). Then,
	\begin{eqnarray}\label{convergencerate}
		\|\phi_{k(\delta)}^\delta - \phi^\dag\|_{\textbf{Q}_{\Gamma_u}} = \mathcal{O}(\ln^{-p}(\delta^{-1})).\label{errorestimate2}
	\end{eqnarray}
\end{theorem}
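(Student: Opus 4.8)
The plan is to split the total error by the triangle inequality into a \emph{data propagation error} and an \emph{approximation (regularization) error}. Writing $\phi_{k(\delta)}$ for the iterate produced by \textbf{Algorithm 1} run on the \emph{noise-free} data $f$ but stopped at the very index $k=k(\delta)$ selected by the discrepancy principle (\ref{dp}) on the noisy data (well defined by Assumption \ref{assumpProcedure}), I would estimate
\[
\|\phi_{k(\delta)}^\delta - \phi^\dag\|_{\textbf{Q}_{\Gamma_u}}\le \|\phi_{k(\delta)}^\delta - \phi_{k(\delta)}\|_{\textbf{Q}_{\Gamma_u}} + \|\phi_{k(\delta)} - \phi^\dag\|_{\textbf{Q}_{\Gamma_u}},
\]
and show that each summand is $\mathcal{O}(\ln^{-p}(\delta^{-1}))$.

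For the approximation error $\|\phi_{k(\delta)} - \phi^\dag\|_{\textbf{Q}_{\Gamma_u}}$, I would invoke the source condition (\ref{sourcecondition}) together with the already-established bound (\ref{errorestimate1}) and the exponential decay $\sigma_j\le C_2\rho^j$ from Proposition \ref{prop:K}(iv), which yields $\|\phi_{k(\delta)} - \phi^\dag\|_{\textbf{Q}_{\Gamma_u}}\le C_1\|\psi^\dag\|_{\textbf{Q}_{\Gamma_u}}\,\rho^{2\nu(k(\delta)+1)}$. The second inequality of Lemma \ref{kRate} gives $\rho^{2\nu(k(\delta)+1)}\le C_5\ln^{-p}(|\bar\mu_{k(\delta)+1}|^{-1})$, and then the stopping rule (\ref{dp}), namely $|\bar\mu_{k(\delta)+1}|\le\tau\delta$, together with the monotonicity of $\ln^{-p}$, bounds this by $C_5\ln^{-p}((\tau\delta)^{-1})=\mathcal{O}(\ln^{-p}(\delta^{-1}))$.

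For the data propagation error I would expand both iterates in the orthonormal bases $P_k,P_k^\delta$ as $\phi_{k}^\delta = P_k^\delta\star\big((G_k^\delta)^\dagger\gamma_1^\delta e_1\big)$ and $\phi_{k}=P_k\star\big(G_k^\dagger\gamma_1 e_1\big)$, add and subtract $P_k\star\big((G_k^\delta)^\dagger\gamma_1^\delta e_1\big)$, and use the $P_k$-analogue of the isometry of Proposition \ref{prop:isometric} (valid since the $p_j$ are orthonormal). The resulting pieces are controlled by the GKB parameter perturbation $\|p_j^\delta-p_j\|_{\textbf{Q}_{\Gamma_u}},\,|\gamma_1^\delta-\gamma_1|\le C_4 k!\delta$ from Lemma \ref{lemmaParameter} and the pseudoinverse bound $\|(G_k^\delta)^\dagger - G_k^\dagger\|_2\le 2\sqrt2\,C_G^2 C_4(k+3)!\delta$ from Lemma \ref{ginverse}; assembling them (and absorbing the factors $\sqrt{k}$, $\|(G_k^\delta)^\dagger\|_2$, $\gamma_1$) gives $\|\phi_{k(\delta)}^\delta-\phi_{k(\delta)}\|_{\textbf{Q}_{\Gamma_u}}\le C(k(\delta)+3)!\,\delta$. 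I would then feed in the first inequality of Lemma \ref{kRate}, $(k(\delta)+3)!\le C_5|\bar\mu_{k(\delta)}|^{-1}\ln^{-p}(|\bar\mu_{k(\delta)}|^{-1})$, so the data error is at most $C C_5\,|\bar\mu_{k(\delta)}|^{-1}\ln^{-p}(|\bar\mu_{k(\delta)}|^{-1})\,\delta$.

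The final and most delicate step converts the $|\bar\mu_{k(\delta)}|$-dependence into a $\delta$-dependence. By the \emph{minimality} of the stopping index, $|\bar\mu_{k(\delta)}|>\tau\delta$, hence $|\bar\mu_{k(\delta)}|^{-1}<(\tau\delta)^{-1}$. Since $x\mapsto x\,\ln^{-p}(x)$ is increasing for $x>e^{p}$ (which holds once $\delta$ is small), I keep the product together and bound $|\bar\mu_{k(\delta)}|^{-1}\ln^{-p}(|\bar\mu_{k(\delta)}|^{-1})\le(\tau\delta)^{-1}\ln^{-p}((\tau\delta)^{-1})$; multiplying by $\delta$ cancels the $\delta^{-1}$ and leaves $C C_5\tau^{-1}\ln^{-p}((\tau\delta)^{-1})=\mathcal{O}(\ln^{-p}(\delta^{-1}))$. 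Combining the two estimates yields (\ref{convergencerate}). The main obstacle I anticipate is the rigorous assembly of the data-error bound $C(k+3)!\delta$: one must control $\|(G_k^\delta)^\dagger\|_2$ and $\|(G_k^\delta)^\dagger\gamma_1^\delta e_1\|_2$ uniformly enough that the factorial growth of Lemmas \ref{lemmaParameter}--\ref{ginverse} is matched \emph{exactly} by the first inequality of Lemma \ref{kRate}, and one must confirm that the monotonicity threshold $x>e^{p}$ is eventually met so that the product estimate $x\ln^{-p}(x)$ is legitimate.
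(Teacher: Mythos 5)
Your proposal is correct and follows essentially the same route as the paper: the same triangle-inequality split into the noise-free iterate $\phi_{k(\delta)}$, the same use of (\ref{errorestimate1}) with the exponential singular-value decay for the approximation error, the same $P_k$/$P_k^\delta$ expansion with Lemmas \ref{lemmaParameter} and \ref{ginverse} giving the $(k+3)!\,\delta$ data-error bound, and the same conversion via Lemma \ref{kRate} and the discrepancy principle. If anything, you are more explicit than the paper about the final step (the minimality bound $\vert\bar{\mu}_{k(\delta)}\vert>\tau\delta$ and the monotonicity of $x\mapsto x\ln^{-p}x$), which the paper uses implicitly.
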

\begin{proof}
	By the triangle inequality
	\begin{align*}
		\|\phi_{k(\delta)}^\delta - \phi^\dag\|_{\textbf{Q}_{\Gamma_u}} & \leq \|\phi_{k(\delta)}^\delta-\phi_k\|_{\textbf{Q}_{\Gamma_u}}+\|\phi_k-\phi^\dag\|_{\textbf{Q}_{\Gamma_u}}\\
		& \leq \|\phi_{k(\delta)}^\delta-\phi_k\|_{\textbf{Q}_{\Gamma_u}} + C_1 \|\psi^\dag\|_{\textbf{Q}_{\Gamma_u}}\rho^{2\nu(k+1)}.
	\end{align*}
	We estimate the first term on the right-hand side. By applying the generalized GKB method to solve (\ref{operator}) and (\ref{operatoreqnnoise}) separately, we obtain
	\[
	\phi_{k}^\delta = P_k^\delta \star (G_k^\delta)^\dagger\|f^\delta\|_{0,\Omega}e_1,\quad \phi_k = P_k \star G_k^\dagger \|f\|_{0,\Omega}e_1,
	\]
	where $P_k^\delta=[p_1^\delta,p_2^\delta,\cdots,p_k^\delta]$ and $P_k=[p_1,p_2,\cdots,p_k]$, obtained from the GKB process, and
	\[
	G_k^\delta =
	\left(
	\begin{array}{cccc}
		\beta_1^\delta  &                &                 &                 \\
		\gamma_2^\delta & \beta_2^\delta &                 &                 \\	
		                & \ddots         & \ddots          &                 \\
		                &                & \gamma_k^\delta & \beta_k^\delta  \\
		                &                &                 & \gamma_{k+1}^\delta
	\end{array} \right)_{(k+1)\times k},
	G_k =
	\left(
	\begin{array}{cccc}
		\beta_1  &         &          &            \\
		\gamma_2 & \beta_2 &          &             \\	
		         & \ddots  & \ddots   &             \\
		         &         & \gamma_k & \beta_k      \\
		         &         &          & \gamma_{k+1}
	\end{array} \right)_{(k+1)\times k}.
	\]
	Then, it follows from $\|P_k^\delta\|_{\textbf{Q}_{\Gamma_u}} = 1$, Lemmas \ref{lemmaParameter} and \ref{ginverse} that
	\begin{align*}
		&\|\phi_{k}^\delta-\phi_k\|_{\textbf{Q}_{\Gamma_u}} = \|P_k^\delta\star (G_k^\delta)^\dagger\|f^\delta\|_{0,\Omega}e_1 - P_k\star (G_k)^\dagger\|f\|_{0,\Omega}e_1\|_{\textbf{Q}_{\Gamma_u}}\\
		& = \|P_k^\delta\star(G_k^\delta)^\dagger(\|f^\delta\|_{0,\Omega} - \|f\|_{0,\Omega})e_1 + P_k^\delta\star ((G_k^\delta)^\dagger - G_k^\dagger)\|f\|_{0,\Omega}e_1\\
		& \quad + (P_k^\delta - P_k)\star G_k^\dagger\|f\|_{0,\Omega}e_1\|_{\textbf{Q}_{\Gamma_u}} \\
		& \leq \|(G_k^\delta)^\dagger\|_2 C_f\delta + \|(G_k^\delta)^\dagger - G_k^\dagger \|_2\|f\|_{0,\Omega} + \|P_k^\delta-P_k\|_{\textbf{Q}_{\Gamma_u}} \|G_k^\dagger\|_2\|f\|_{0,\Omega}\\
		& \leq \left( C_fC_G k +2\sqrt{2}C^2_G C_4 (k+3)! \|f\|_{0,\Omega} + C_4 k! C_G k \|f\|_{0,\Omega} \right)\delta\\
		& \leq  C_6 (k+3)!\delta,
	\end{align*}
	where $C_6 = C_fC_G  +2\sqrt{2}C^2_G C_4 \|f\|_{0,\Omega} + C_4  C_G \|f\|_{0,\Omega}$. According to Lemma \ref{kRate} and the stopping rule in (\ref{dp}), we deduce that
	\begin{align*}
		& \|\phi_{k(\delta)}^\delta - \phi^\dag\|_{\textbf{Q}_{\Gamma_u}}
		\leq C_6 (k+3)!\delta + C_1 \|\psi^\dag\|_{\textbf{Q}_{\Gamma_u}} \rho^{2\nu(k+1)} \\
		& \leq C_5C_6 \tau^{-1} \ln^{-p}(\tau^{-1}\delta^{-1}) + C_1 C_5 \|\psi^\dag\|_{\textbf{Q}_{\Gamma_u}} \ln^{-p}(\tau^{-1}\delta^{-1}),
	\end{align*}
	which yields the required estimate (\ref{convergencerate}).\qed
\end{proof}

\section{Numerical simulation}\label{sec:numer}

\subsection{Discretization with finite element methods}\label{subsec:fem}

In this subsection, finite element methods \cite{BGF2022} are used to solve BVPs such as (\ref{databvp}), (\ref{forwardbvp}) and (\ref{adjointeqn}). We note that they can also be solved by the boundary element method \cite{VLD2021,MEIL2000} and the boundary integral equation method \cite{AG1980,S2017}. The numerical implementation mainly involves the computation $\|\phi\|_{\textbf{Q}_{\Gamma_u}}$, $\|g\|_{0,\Omega}, K\phi$ and $K^*g$ for some $\phi\in \textbf{Q}_{\Gamma_u}$ and $g\in Q$. Standard conforming linear finite element methods are applied for this purpose. Specifically, let $\{\mathcal{T}_h\}$ be a regular family of finite element partitions of $\overline{\Omega}$, and define the real linear finite element space
\[
V^h = \{v\in \mathcal{C}(\overline{\Omega})\vert\ v\textrm{ is linear in }T\quad \forall\ T\in \mathcal{T}_h\}.
\]
Set $\textbf{V}^h= V^h\oplus i\, V^h$ as the complex version of $V^h$.

Recall that for any $(\Phi^\delta,T^\delta)\in Q_m$, $\phi=\varphi+i\,t\in \textbf{Q}_{\Gamma_u}$, $g\in Q$, we have $f^\delta=-\hat{u}_2$ with $\hat{\textbf{u}}=\hat{u}_1+i\,\hat{u}_2\in \textbf{V}$ solving
\begin{equation}
	a(\hat{\textbf{u}},\textbf{v})= F(\Phi^\delta,T^\delta,0,0;\textbf{v}) \quad\forall\,\textbf{v}\in \textbf{V}, \label{weak1}
\end{equation}
$K\phi=\tilde{u}_2$ with $\tilde{\textbf{u}}=\tilde{u}_1+i\,\tilde{u}_2\in \textbf{V}$ solving
\begin{equation}
	a(\tilde{\textbf{u}},\textbf{v})= F(0,0,\varphi,t;\textbf{v}) \quad\forall\,\textbf{v}\in \textbf{V}, \label{weak2}
\end{equation}
and $K^*g=w_2+i\, w_1\vert_{\Gamma_u}$ with $\textbf{w}=w_1+i\,w_2\in \textbf{V}$ solving
\begin{equation}
	a(\textbf{w},\textbf{v})= (g, \textbf{v})_{0,\Omega} \quad\forall\,\textbf{v}\in \textbf{V}. \label{weak3}
\end{equation}
Here $a(\cdot,\cdot)$ and $F$ are defined in (\ref{auv}) and (\ref{fv}), respectively.

Let $n$ denote the number of nodes of triangulation $\mathcal{T}_h$, and $\{\varphi_l^h\}_{l=1}^n\subset V^h$ denote the nodal basis functions of $V^h$ associated with the grid points $\{x_l\}_{l=1}^n$. Define
\[
\begin{array}{llll}
	A=(a_{ls})_{n\times n},& a_{ls}=\int_{\Omega}\nabla \varphi_s^h\cdot\nabla\varphi_l^h\,\textrm{d}x, \\[0.3em]
	M=(m_{ls})_{n\times n},& m_{ls}=\int_{\Omega} \varphi_s^h\,\varphi_l^h\,\textrm{d}x  \\[0.3em]
	C=(c_{ls})_{n\times n},& c_{ls}=\int_{\Gamma} \varphi_s^h\,\varphi_l^h\,\textrm{d}s,\\ [0.3em]
	C_m=(c^m_{ls})_{n\times n},& c^m_{ls}=\int_{\Gamma_m} \varphi_s^h\,\varphi_l^h\,\textrm{d}s,\\ [0.3em]
	C_u=(c^u_{ls})_{n\times n},& c^u_{ls}=\int_{\Gamma_u} \varphi_s^h\,\varphi_l^h\,\textrm{d}s,\\ [0.3em]
	l, s=1, 2, \cdot\cdot\cdot, n.  &
\end{array}
\]
Then, without going into details, by applying the finite element method, (\ref{weak1})--(\ref{weak3}) are reduced to three linear systems as follows:

\begin{equation}
	\left \{
	\begin{array}{ll}
		A\,\hat{u}_1-C\,\hat{u}_2= C_m \Phi^\delta,\\[0.5em]
		C\,\hat{u}_1+A\,\hat{u}_2=C_m T^\delta,
	\end{array}
	\right.\label{system1}
\end{equation}
\begin{equation}
	\left \{
	\begin{array}{ll}
		A\,\tilde{u}_1-C\,\tilde{u}_2= C_u \varphi,\\[0.5em]
		C\,\tilde{u}_1+A\,\tilde{u}_2=C_u t,
	\end{array}
	\right.\label{system2}
\end{equation}
and
\begin{equation}
	\left \{
	\begin{array}{l}
		A\,w_1-C\,w_2= M\,g,\\[0.5em]
		C\,w_1+A\,w_2=0.
	\end{array}
	\right.\label{system3}
\end{equation}
What we wish to point out, is that for the sake of simplicity, the same symbols are used for both functions and their vector expansions in $\mathbb{R}^n$, associated with nodal basis functions $\{\varphi_l^h\}_{l=1}^n$. No confusion is expected with the help of context.

As a result, in \textbf{Algorithm 1}, Eq. (\ref{system1}) is solved for $f^\delta$, Eq. (\ref{system2}) is solved for the operation of $K$, and Eq. (\ref{system3}) is solved for the operation of $K^*$. Moreover, for any $\phi=\varphi+i\,t\in \textbf{Q}_{\Gamma_u}, g\in Q$,
\begin{equation}
	\|\phi\|_{\textbf{Q}_{\Gamma_u}}\approx\left(\varphi^T C_u \varphi +t^T C_u t\right)^{1/2},
	\|g\|_{0,\Omega}\approx\left(g^T M g\right)^{1/2}.\label{norms}
\end{equation}

\subsection{Numerical results}\label{subsec:numresult}

In this part, we focus on presenting some numerical results for the Cauchy problem. \textbf{Algorithm 1}, together with the discrepancy principle (\ref{dp}) and finite element approximations (\ref{system1})--(\ref{norms}), are implemented for the reconstruction of $(\varphi^\dag, t^\dag)$. Let $(\varphi^h_{k(\delta)},t^h_{k(\delta)})$ denote the obtained approximate solutions, where $h$ is the mesh size of the finite element partition of $\mathcal{T}_h$. In order to better estimate the accuracy of the solutions and compare the accuracy of the different methods, we define the following relative errors in approximate solutions:
\[
\textrm{Err}_\varphi = \frac{\|\varphi^h_{k(\delta)}- \varphi^\dag\|_{0,\Gamma_u}}{\|\phi^\dag\|_{0,\Gamma_u}},\quad
\textrm{Err}_t = \frac{\|t^h_{k(\delta)} - t^\dag\|_{0,\Gamma_u}}{\|t^\dag\|_{0,\Gamma_u}}.
\]

For comparison with the existing work, we take examples from \cite{AN,CGH16}. Specifically, let $\Omega \subset \mathbb{R}^2$ be a ring with inner radius $r_1$ and external radius $r_2$. Take the external circle as $\Gamma_m$ and the inner circle as $\Gamma_u$. For simulation, in each example, we are given a true state $u^\dag$. Then the noise-free data and true solutions are computed using
\[
\Phi = \partial_n u^\dag\vert_{\Gamma_m},\ T = u^\dag\vert_{\Gamma_m}, \
\varphi = \partial_n u^\dag\vert_{\Gamma_u},\ t = u^\dag\vert_{\Gamma_u}.
\]
With a given relative noise level $\delta^\prime$, the noisy data $(\Phi^\delta,T^\delta)$ is produced by
\begin{eqnarray*}
	\Phi^\delta(x,y) = (1 + \delta^\prime \cdot 2 \cdot (\textrm{rand}(x,y) - 0.5))\Phi(x,y),\\
	T^\delta(x,y) = (1 + \delta^\prime \cdot 2 \cdot (\textrm{rand}(x,y) - 0.5))T(x,y),
\end{eqnarray*}
with $(x,y)\in \Gamma_m$, where rand$(x,y)$ returns a random number of uniform distributions on $[0,1]$. The noise level $\delta$ is computed as
\[
\delta = \|f^\delta-f\|_{0,\Omega}.
\]
Then, the approximate solutions $(\varphi^h_{k(\delta)},t^h_{k(\delta)})$ on $\Gamma_u$ are reconstructed from $(\Phi^\delta,T^\delta)$ on $\Gamma_m$. All experiments are implemented on a finite element mesh with 1796 nodes, 3392 elements and mesh size $h=0.1375$. Moreover, in all examples, we set $r_1 = 1, r_2 = 2$ and  the maximal iterative number $N_{\text{max}}=1000$.

We compare the method proposed in this paper with the Landweber method and the conjugate gradient\,(CG) method. With the CCBM reformulation, both the Landweber and CG methods can be readily applied in continuous context. Otherwise, for other frameworks such as those $K$ listed in the Appendices, fractional-order spaces have to be encountered and the numerical simulations are unfriendly. For convenience of statement, we abbreviate the CCBM-based Landweber method, CG method and GKB method, as CCBM-L, CCBM-CG and CCBM-GKB, respectively.

\subsubsection{Example 1.}

In the first example, set $u^\dag = e^x \cos y$ in $\Omega$. Then on $\Gamma_m$, $\Phi = e^x(x\cos y - y\sin y)/2, T=e^x \cos y$. The true solutions are $\varphi^\dag = -e^x(x\cos y - y\sin y)$, $t^\dag= e^x\cos y$.

For $\delta^\prime=0.01, 0.05, 0.1$, we compute the condition numbers of $G_j$ for different iterative steps $j$, and show them in Table \ref{tab:11}. As indicated by Table \ref{tab:11}, compared with the exponential decaying behavior of the singular values of $K$, the condition numbers of $G_j$ increase relatively gently. Therefore, when $j$ is not large, we do not introduce any additional regularization strategy for solving the reduced optimal problem (\ref{opteqn}). This is also true for the examples below. However, for the conciseness of this paper, similar results are omitted.

\begin{table}[H]
	\caption{The dependence of the condition numbers of $G_j$ on $j$.}\label{tab:11}
	\begin{center}			
		\begin{tabular}{|c|c|c|c|} \hline
				$j$    & $\delta^\prime=0.01$  & $\delta^\prime=0.05$ & $\delta^\prime=0.1$\\  \hline 
				1      &        1              &        1	          &       1             \\
				2      &        1.5829         &        1.5905        &       1.5987        \\
				3      &        2.0772         &        2.2044	      &       2.4479        \\
				4      &        3.4388         &        3.4366        &       3.4316        \\
				5      &        3.6324         &        3.6380	      &       3.6453        \\
				6      &        5.0179         &        5.0884        &       5.1776        \\
				7      &        5.2368         &        5.3078	      &       5.3988        \\
				8      &        7.3418         &        7.4636	      &       7.6163        \\
				9      &        8.0695         &        8.1578	      &       8.2741        \\
				10     &        16.5797        &        17.3378       &       17.6157       \\  \hline
		\end{tabular}
	\end{center}
\end{table}

\begin{table}[H]
	\caption{Comparison of different methods for different $\delta^\prime$\,(Example 1).}\label{tab:exam12}
	\begin{center}
		\begin{tabular}{|c|c|c|c|c|} \hline
				\multicolumn{2}{|c|}{$\delta^\prime$}        & 0.01        & 0.05        & 0.1         \\ \hline
				\multirow{3}{*}{Err$_\varphi$}   & CCBM-L    & 1.4731e-1   & 2.5222e-1   & 2.6929e-1   \\
				                                 & CCBM-CG   & 1.1946e-1   & 2.6669e-1   & 2.5406e-1   \\
				                                 & CCBM-GKB  & 1.1870e-1   & 2.6669e-1   & 2.5406e-1   \\ \hline
				\multirow{3}{*}{Err$_t$}         & CCBM-L    & 4.8446e-2   & 1.2540e-1   & 1.8508e-1   \\
				                                 & CCBM-CG   & 2.8307e-2   & 1.2178e-1   & 1.9434e-1   \\
				                                 & CCBM-GKB  & 2.7818e-2   & 1.2178e-1   & 1.9434e-1   \\ \hline
				\multirow{3}{*}{$k(\delta)$}     & CCBM-L    & 172         & 19 	     & 12          \\
				                                 & CCBM-CG   & 10          & 5	         & 4           \\
				                                 & CCBM-GKB  & 10          & 5           & 4           \\ \hline
			\end{tabular}
	\end{center}
\end{table}

For different relative noise levels $\delta^\prime$, the CCBM-L, CCBM-CG and CCBM-GKB methods are applied to the Cauchy problem specified here. The relative errors in solutions and the corresponding iterative steps are displayed in Table \ref{tab:exam12}, whence we conclude that all three methods yield satisfactory numerical solutions, with comparable levels of accuracy. By comparison, both the CCBM-CG and CCBM-GKB methods exhibit acceleration behavior. Moreover, the CCBM-GKB method affords slightly better solutions than the CCBM-CG method, even as both methods follow the same iterative steps. The approximate solutions derived from \textbf{Algorithm 1} are shown in Fig. \ref{Fig:11}. From Table \ref{tab:exam12} and Fig. \ref{Fig:11}, the larger the value of $\delta^\prime$ is, the worse the solution accuracy, and the earlier the iteration needs to be stopped.

\begin{figure}[H]
	\centering
	\begin{minipage}[t]{0.48\textwidth}
		\centering
		\includegraphics[height=0.39 \textheight]{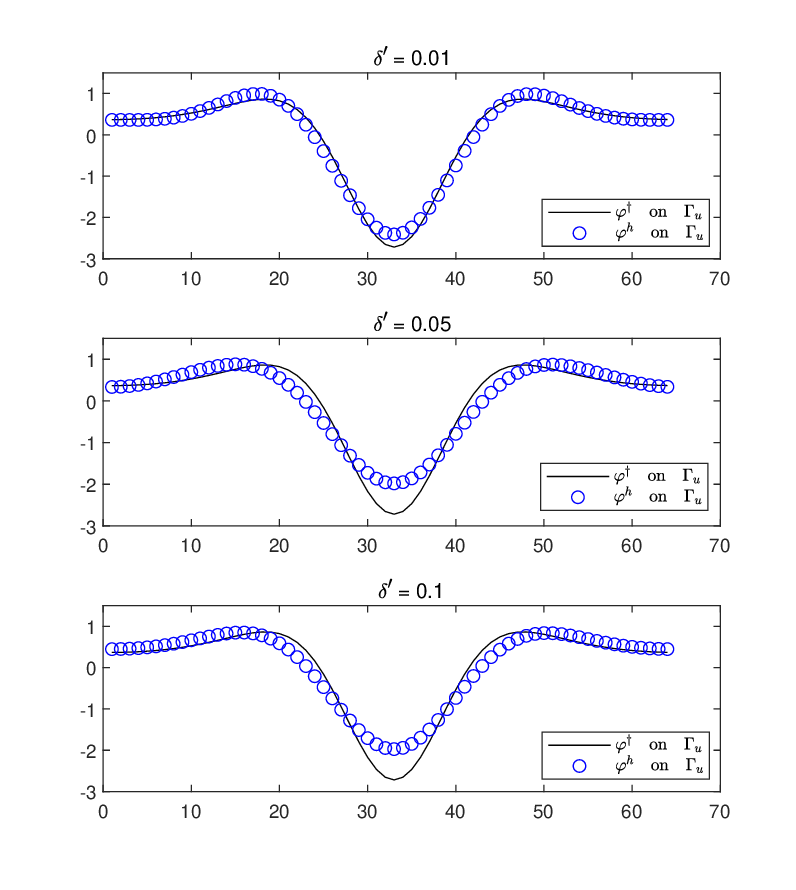}
	\end{minipage}
	\begin{minipage}[t]{0.48\textwidth}
		\centering
		\includegraphics[height=0.39 \textheight]{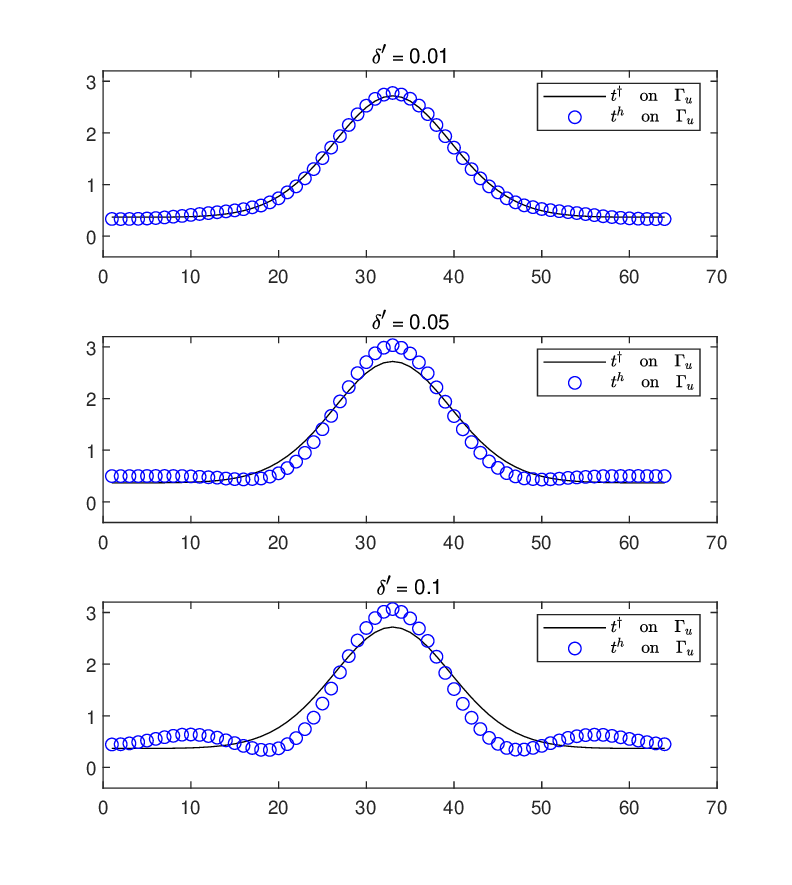}
	\end{minipage}
	\caption{$\varphi^h_{k(\delta)}$ (left) and $t^h_{k(\delta)}$ (right) for different $\delta^\prime$\,(Example 1).}\label{Fig:11}
\end{figure}

Finally, we discuss the case of discontinuous Dirichlet data on $\Gamma_u$. To this end, given the exact expression
\[
t^\dagger = \left\{
\begin{array}{ll}
         e^x\cos y & y=\sqrt{1-x^2},     \\
         0         & y=-\sqrt{1-x^2}, \\
\end{array} \right. \textrm{ on }\ \Gamma_u,
\]
which is discontinuous at the point $(1,0)$, and $\textrm{on}\ \Gamma_m,\ \Phi = e^x(x\cos y - y\sin y)/2$. $T$ can be obtained by solving the BVP which uses $t^\dagger$ as Dirichlet boundary data on $\Gamma_u$ and $\Phi$ as Neumann boundary data on $\Gamma_m$. 

We apply the CCBM-L, CCBM-CG and CCBM-GKB to compute approximate solution $(\varphi_k^h,t_k^h)$ from the reconstructed noisy Cauchy data $(\Phi^\delta,T^\delta)$. Note that there is no precise expression for $\varphi$ here and then it is difficult to calculate its relative error. Thus, we just present the relative error of $t$ and iterative number $k$ in Table \ref{tab:exam13}. We conclude from Table \ref{tab:exam13} that compared to CCBM-L, CCBM-CG and CCBM-GKB have a similar acceleration effect. In addition, compared with Table \ref{tab:exam12}, the relative error of $t$ in Table \ref{tab:exam13} is relatively large.

It can be observed from Fig. \ref{Fig:12} that the discontinuous point contribute to a major component of the large relative error Err$_t$. One possible reason is that the reconstruction of the base functions $\{v_i\}_i$ and $\{p_i\}_i$ in \textbf{Algorithm 1} implies implicitly $L^2$ regularization. Therefore, the reconstruction for the smooth solution by using the proposed algorithm is better than the case for the discontinuous solution. 

\begin{table}[H]
	\caption{The relative error of $t$ and the corresponding iterative number for different $\delta^\prime$.}\label{tab:exam13}
	\begin{center}
		\begin{tabular}{|c|c|c|c|c|} \hline
			\multicolumn{2}{|c|}{$\delta^\prime$}        & 0.01              & 0.05        & 0.1 \\ \hline
			\multirow{3}{*}{Err$_t$}         & CCBM-L    & 3.5102e-1         & 4.4433e-1   & 5.6206e-1   \\
			                                 & CCBM-CG   & 3.0849e-1         & 3.6777e-1   & 5.5603e-1   \\
			                                 & CCBM-GKB  & 3.0845e-1         & 3.6828e-1   & 5.5603e-1  \\ \hline
			\multirow{3}{*}{$k(\delta)$}     & CCBM-L    & $N_{\text{max}}$  & 191         & 54      \\
			                                 & CCBM-CG   &   17              & 10          & 8      \\
			                                 & CCBM-GKB  &   17              & 10          & 8        \\ \hline
		\end{tabular}
	\end{center}
\end{table}

\begin{figure}[H]
	\begin{center}
		\includegraphics[height=0.4 \textheight]{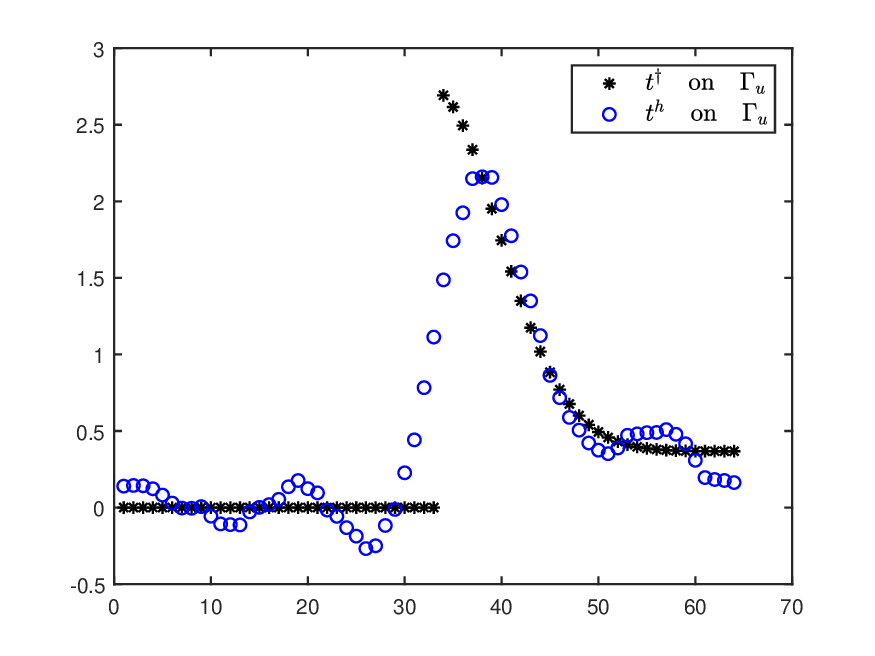}
		\caption{$t^h_{k(\delta)}$ for $\delta^\prime=0.01$.}\label{Fig:12}
	\end{center}
\end{figure}

\subsubsection{Example 2.}

In this example, we set $u^\dag = x^2-y^2$ in $\Omega$. Then on $\Gamma_m$, $\Phi = x^2-y^2, T=x^2-y^2$. The true solutions are $\varphi^\dag = 2y^2-2x^2$, $t^\dag = x^2-y^2$.

After the noisy Cauchy data is constructed, the three aforementioned methods are applied to the underlying Cauchy problem. The experimental results are shown in Table \ref{tab:exam21}. A similar conclusion to that in Example 1 can be drawn from Table \ref{tab:exam21}. In addition, the approximate solutions solved by \textbf{Algorithm 1} are shown in Fig. \ref{Fig:21}.

\begin{table}[H]
	\caption{ Comparison of different methods for different $\delta^\prime$\,(Example 2).}\label{tab:exam21}
	\begin{center}
		\begin{tabular}{|c|c|c|c|c|} \hline
				\multicolumn{2}{|c|}{$\delta^\prime$}             & 0.01        & 0.05           & 0.1 \\\hline
				\multirow{3}{*}{Err$_\varphi$}        & CCBM-L    & 7.1608e-3   & 1.1702e-2	     & 4.7609e-2    \\
				                                      & CCBM-CG   & 2.7581e-3   & 3.1189e-3 	 & 1.0440e-2    \\
				                                      & CCBM-GKB  & 2.7581e-3   & 3.1188e-3	     & 1.0440e-2    \\ \hline
				\multirow{3}{*}{Err$_t$}              & CCBM-L    & 1.4390e-2   & 2.3294e-2	     & 9.3819e-2    \\
				                                      & CCBM-CG   & 6.4128e-3   & 7.2509e-3 	 & 2.1435e-2    \\
				                                      & CCBM-GKB  & 6.4128e-3   & 7.2509e-3	     & 2.1434e-2    \\ \hline
				\multirow{3}{*}{$k(\delta)$}          & CCBM-L    & 136         & 113	         & 60           \\
			                                     	  & CCBM-CG   & 4           & 4	             & 4            \\
				                                      & CCBM-GKB  & 4           & 4	             & 4            \\ \hline
		\end{tabular}
	\end{center}
\end{table}

\begin{figure}[H]
	\centering
	\begin{minipage}[t]{0.48\textwidth}
		\centering
		\includegraphics[height=0.4 \textheight]{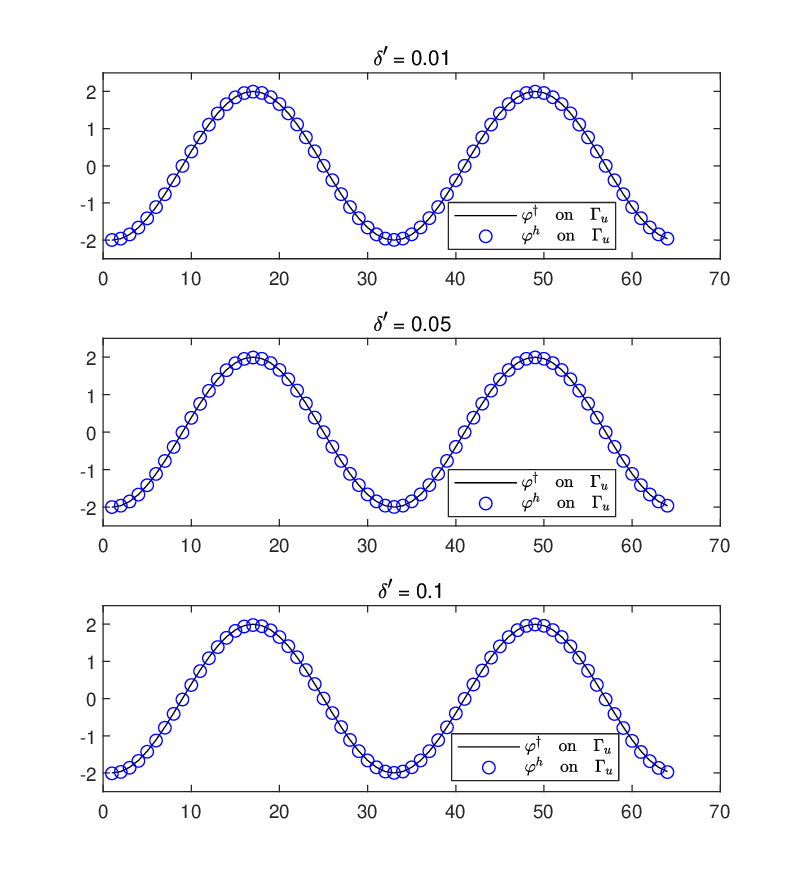}
	\end{minipage}
	\begin{minipage}[t]{0.48\textwidth}
		\centering
		\includegraphics[height=0.4 \textheight]{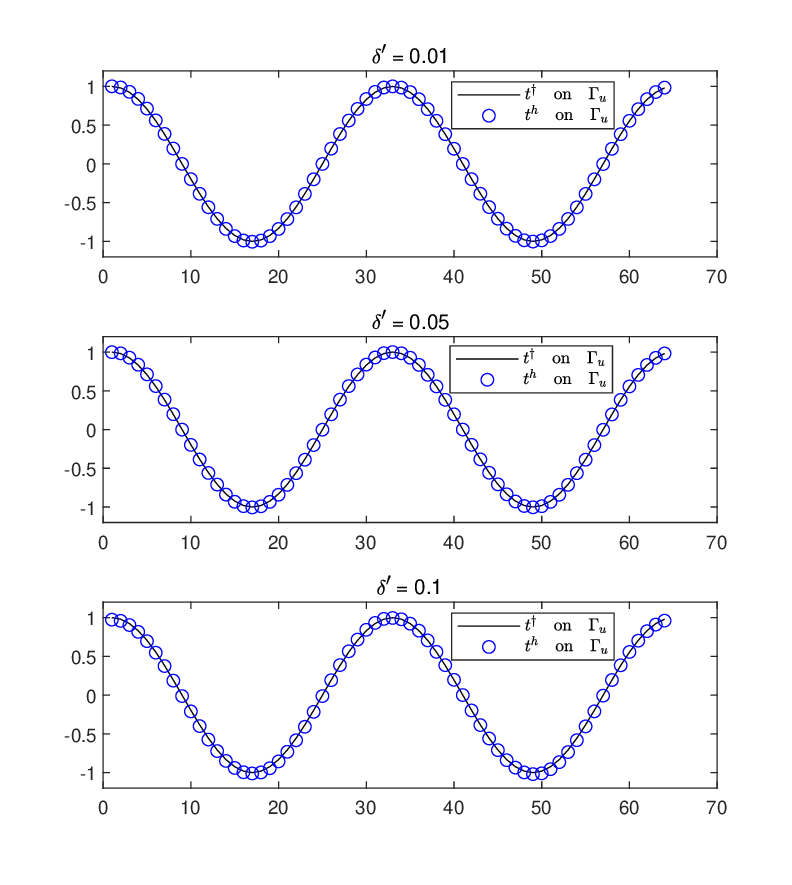}
	\end{minipage}
	\caption{$\varphi^h_{k(\delta)}$ (left) and $t^h_{k(\delta)}$ (right) for different $\delta^\prime$\,(Example 2).}\label{Fig:21}
\end{figure}

\subsubsection{Example 3.}

In contrast to Examples 1-2, in the example, we now consider a more general Cauchy problem:
\[
\left\{
\begin{array}{ll}
	- \textrm{div }(\kappa\nabla u) = 0 & \textrm{in}\ \Omega,\\
	\kappa\partial_n u = \Phi^\delta, u = T^\delta & \textrm{on}\ \Gamma _m, \\
	\kappa\partial_n u = \varphi, u = t & \textrm{on}\ \Gamma _u
\end{array} \right.
\]
with
\[
\kappa=\left[
\begin{array}{ll}
	1 & 0 \\
	0 & \zeta
\end{array} \right].
\]
This model arises in applications of orthotropic materials \cite{AN}. Let $u^\dag = e^{\sqrt{\zeta}x}\cos y$ in $\Omega$. Then on $\Gamma_m$, $\Phi = \frac{1}{2}e^{\sqrt{\zeta}x}(\sqrt{\zeta}x\cos y-\zeta y \sin y), T=e^{\sqrt{\zeta}x}\cos y$. The true solutions are $\varphi^\dag = e^{\sqrt{\zeta}x}(\zeta y \sin y-\sqrt{\zeta}x\cos y)$, $t^\dag = e^{\sqrt{\zeta}x}\cos y$.

Again, the three CCBM-based methods are applied to the specified Cauchy problem. Here we fix the relative noise level $\delta'=0.01$. The experimental results for different values of $\zeta$ are shown in Table \ref{tab:exam31}. The choice of $\zeta=1$ reduces the problem to that outlined in Example 1. We further plot the approximate solutions obtained from \textbf{Algorithm 1} in Figs \ref{Fig:31}--\ref{Fig:32}. It is clear that the values of the parameter $\zeta$ affect the accuracy in approximated solutions. In fact, we can see from these experiments that the numerical solutions obtained when $\zeta$ is far away from 1 is not as accurate as those when $\zeta$ is close to 1.

\begin{table}[H]
	\caption{Comparison of different methods for different $\zeta$\,(Example 3).}\label{tab:exam31}
	\begin{center}
		\begin{tabular}{|c|c|c|c|c|c|} \hline
			\multicolumn{2}{|c|}{$\zeta$}                & 0.01        & 0.05       & 0.1         & 2          \\ \hline
			\multirow{3}{*}{Err$_\varphi$}   & CCBM-L    & 2.3010e-1   & 4.8053e-2  & 2.5275e-2   & 1.9066e-1  \\
				                             & CCBM-CG   & 2.3411e-1   & 3.8572e-2	& 2.5629e-2   & 1.7002e-1  \\
				                             & CCBM-GKB  & 2.3411e-1   & 3.8573e-2	& 2.5626e-2   & 1.7002e-1  \\ \hline
			\multirow{3}{*}{Err$_t$}         & CCBM-L    & 1.1849e-2   & 6.2002e-3	& 7.4842e-3   & 5.3278e-2  \\
				                             & CCBM-CG   & 1.1921e-2   & 8.2983e-3 	& 6.9206e-3   & 4.0063e-2  \\
				                             & CCBM-GKB  & 1.1921e-2   & 8.2981e-3 	& 6.9195e-3   & 3.9877e-2  \\ \hline
			\multirow{3}{*}{$k(\delta)$}     & CCBM-L    & 206         & 264	    & 88          & 1039       \\
				                             & CCBM-CG   & 14          & 10	        & 12          & 16         \\
				                             & CCBM-GKB  & 14          & 10	        & 12          & 17         \\ \hline
		\end{tabular}
	\end{center}
\end{table}

\begin{figure}[H]
	\begin{center}
		\includegraphics[height=0.4 \textheight]{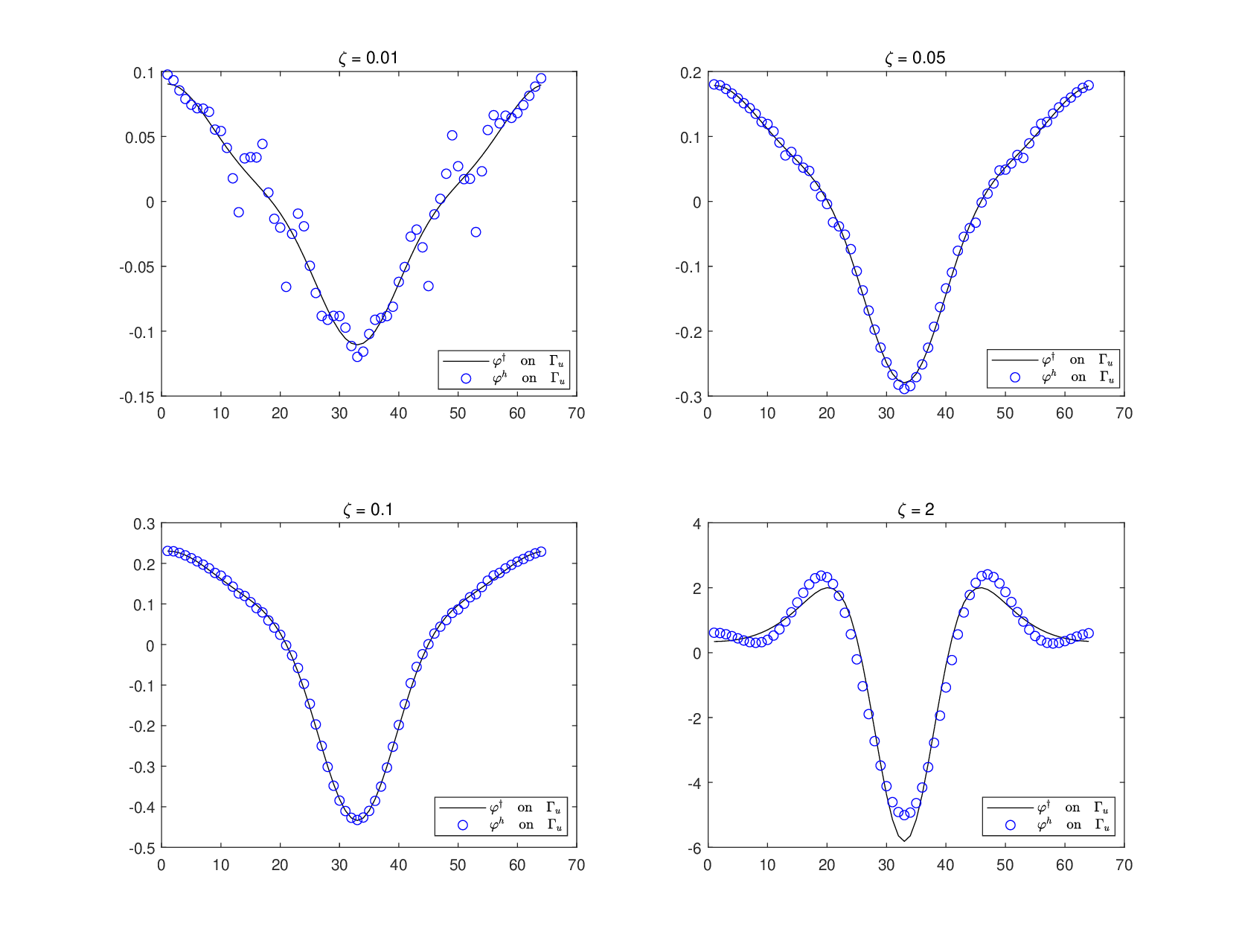}
		\caption{$\varphi^h_{k(\delta)}$ for different $\zeta$\,(Example 3).}\label{Fig:31}
	\end{center}
\end{figure}

\begin{figure}[H]
	\begin{center}
		\includegraphics[height=0.4 \textheight]{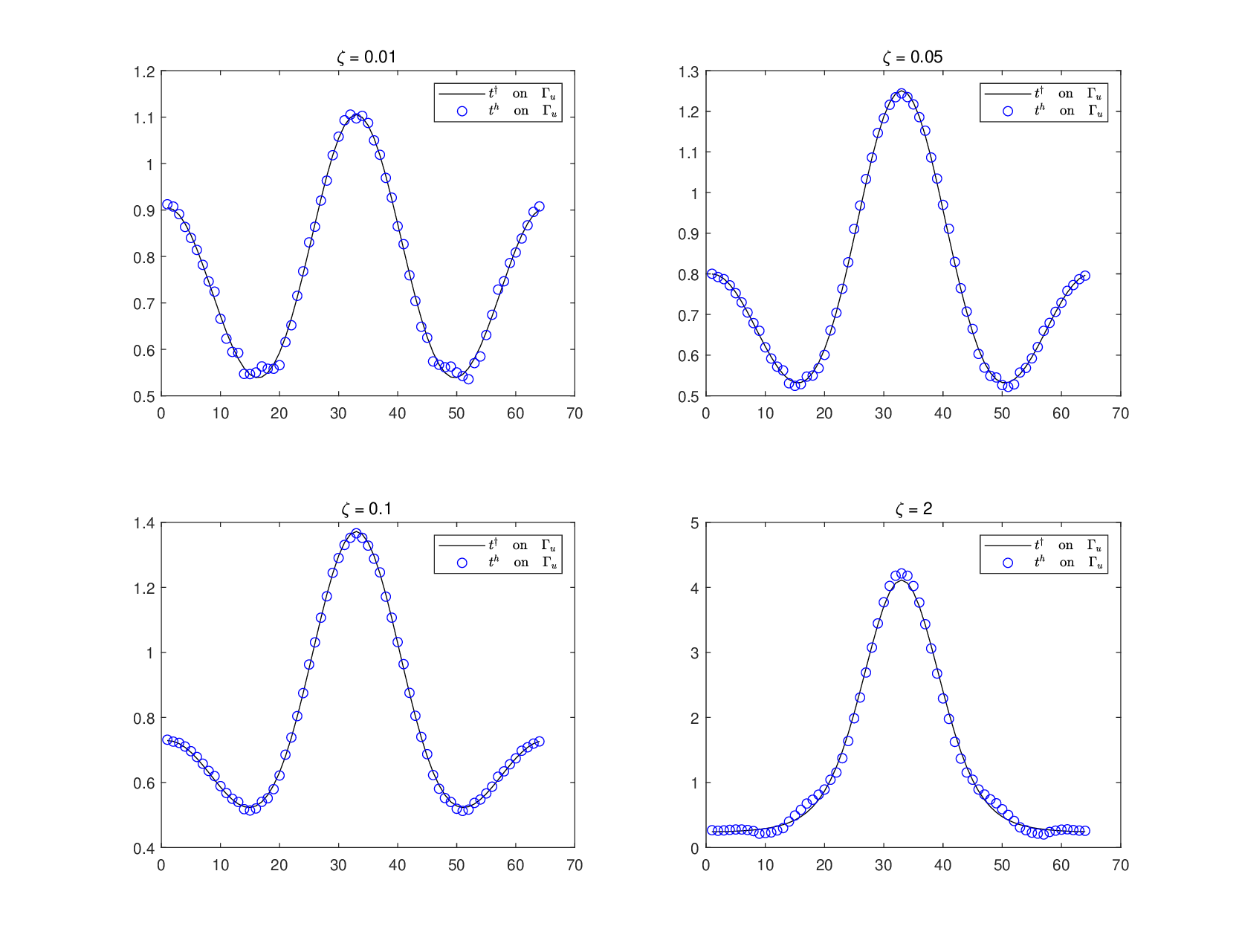}
		\caption{$t^h_{k(\delta)}$ for different $\zeta$\,(Example 3).}\label{Fig:32}
	\end{center}
\end{figure}

\subsubsection{Example 4.}

Consider once again the Cauchy problem from Example 3, but here we set 
\[
\kappa=\left[
\begin{array}{ll}
	1 & 0.3 \\
  0.3 & 0.4
\end{array} \right],
\]
and $u^\dagger=x^3/15 - x^2y + xy^2 + y^3/3$ in $\Omega$. Then on $\Gamma_m$, $\Phi=- x^3/20 - 87x^2y/100 + 3xy^2/4 + 7y^3/20, T=x^3/15 - x^2y + xy^2 + y^3/3$. The true solution are $\varphi^\dagger=x^3/10 + 87yx^2/50 - 3y^2x/2 - 7y^3/10, t^\dagger=x^3/15 - x^2y + xy^2 + y^3/3$. This model arises in applications of anisotropic materials \cite{VLD2021}.

Similarly, Table \ref{tab:exam41} presents some numerical results, which are obtained from using the three CCBM-based methods. Meanwhile, we plot the exact solution and the approximate solutions obtained from \textbf{Algorithm 1} for different noise level in Fig. \ref{Fig:41}. By comparing the numerical results in Table \ref{tab:exam41}, we can obtain conclusions similar to those in Example 1. Moreover, Table \ref{tab:exam41} and Fig. \ref{Fig:41} illustrate that \textbf{Algorithm 1} converges and the reconstruction is satisfactory.

\begin{table}[H]
	\caption{ Comparison of different methods for different $\delta^\prime$\,(Example 4).}\label{tab:exam41}
	\begin{center}
	\begin{tabular}{|c|c|c|c|c|} \hline
		\multicolumn{2}{|c|}{$\delta^\prime$}             & 0.01        & 0.05         & 0.1         \\\hline
		\multirow{3}{*}{Err$_\varphi$}        & CCBM-L    & 8.2951e-2   & 2.1503e-1	   & 2.8750e-1   \\
		                                      & CCBM-CG   & 7.0633e-2   & 1.7078e-1	   & 2.5423e-1   \\
		                                      & CCBM-GKB  & 7.0632e-2   & 1.7078e-1	   & 2.5423e-1   \\ \hline
		\multirow{3}{*}{Err$_t$}              & CCBM-L    & 7.2127e-2   & 2.8562e-1    & 4.2404e-1   \\
		                                      & CCBM-CG   & 5.9308e-2   & 2.2537e-1	   & 3.8236e-1   \\
	  	                                      & CCBM-GKB  & 5.9309e-2   & 2.2537e-1	   & 3.8236e-1   \\ \hline
		\multirow{3}{*}{$k(\delta)$}          & CCBM-L    &  219        & 35           & 9          \\
		                                      & CCBM-CG   &  11         & 8            & 4            \\
		                                      & CCBM-GKB  &  11         & 8            & 4             \\ \hline
	\end{tabular}
\end{center}
\end{table}

\begin{figure}[H]
	\centering
	\begin{minipage}[t]{0.48\textwidth}
		\centering
		\includegraphics[height=0.28 \textheight]{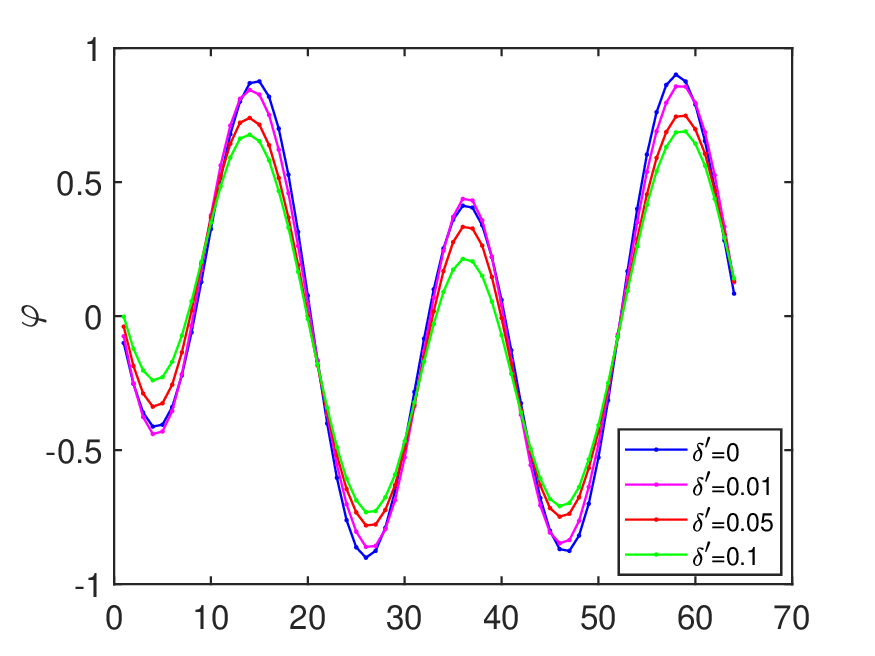}
	\end{minipage}
	\begin{minipage}[t]{0.48\textwidth}
		\centering
		\includegraphics[height=0.28 \textheight]{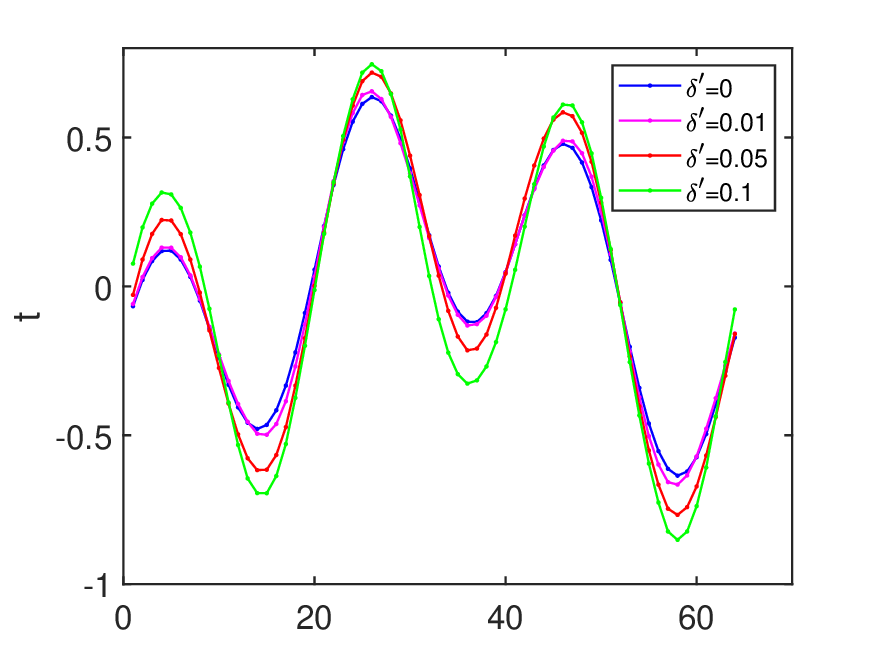}
	\end{minipage}
	\caption{$\varphi^h_{k(\delta)}$ (left) and $t^h_{k(\delta)}$ (right) for different $\delta^\prime$\,(Example 4).}\label{Fig:41}
\end{figure}

\section{Conclusions}\label{sec:con}

This paper proposes a new CCBM-based iterative algorithm for the infinite dimensional Cauchy problem. The iteration is produced through the so-called GKB process. Unlike many other well-known iterative methods such as the Landweber method, $\nu-$method and Nesterov method, where some parameters are introduced and they need to be chosen carefully, no additional parameter needs to be introduced. The sole parameter is the iterative step $k$, which plays the role of the regularization parameter, and is chosen according to the discrepancy principle due to the ill-posedness of the underlying problem. The major innovation lies in the combination of the domain-fitting CCBM framework and the continuous version of the GKB process. As shown by the theoretical analysis and in numerical results, the proposed algorithm is simple but works pretty well, in both accuracy and efficiency. In our opinion, the developed approach can also be easily extended to other inverse problems such as the inverse Robin problem and time-dependent inverse problems for partial differential equations.

\section{Acknowledgement}\label{sec:ack}

This work was supported by the National Natural Science Foundation of China (No. 12071215, 11971230 \& 12171036), the Beijing Natural Science Foundation, China (Key project No. Z210001),  and the National Key Research and Development Program of China (No. 2022YFC3310300).

\bibliographystyle{model1-num-names}
\bibliography{Refs}

\setcounter{equation}{0}
\renewcommand\theequation{A.\arabic{equation}}

\section*{Appendix}
\subsection*{A.1. Proof of Lemma \ref{injective}}

Let $\mathcal{A}=K^*K$. We need to show that if $K^*K\phi=0$, then $\phi=0$. To this end, let $\textbf{u}=u_1+i\,u_2\in \textbf{V}$ be the weak solution of the forward BVP
\begin{eqnarray*}
	\left\{
	\begin{array}{ll}
		- \Delta \textbf{u} = 0 & \textrm{in}\ \Omega,\\
		\partial_n \textbf{u} + i\,\textbf{u} = 0 & \textrm{on}\ \Gamma _m, \\
		\partial_n \textbf{u} + i\,\textbf{u} = \phi & \textrm{on}\ \Gamma _u.
	\end{array} \right.\label{forwardbvp1}
\end{eqnarray*}
Then $K\phi = u_2$. Let $\textbf{w}\in \textbf{V}$ be the weak solution of the adjoint BVP
\begin{eqnarray*}
	\left\{
	\begin{array}{ll}
		- \Delta \textbf{w} = u_2 & \textrm{in}\ \Omega,\\
		\partial_n \textbf{w} + i\,\textbf{w} = 0 & \textrm{on}\ \Gamma.
	\end{array} \right.\label{adjointeqn2}
\end{eqnarray*}
Then $K^*K\phi=w_2+i\,w_1\vert_{\Gamma_u}\in \textbf{Q}_{\Gamma_u}$. If $K^*K\phi=0$, then $w_1=w_2=0$ on $\Gamma_u$. By using arguments similar to those in Proposition \ref{prop:K}(ii), we have $w_2=0$ in $\Omega$ and $w_1$ satisfies
\begin{eqnarray*}
	\left\{
	\begin{array}{ll}
		- \Delta w_1 = u_2 & \textrm{in}\ \Omega,\\
		w_1 = \partial_n w_1=0 & \textrm{on}\ \Gamma. \\
	\end{array} \right.\label{isp1}
\end{eqnarray*}
Performing integration by parts,
\[
\int_\Omega u_2^2dx = \int_\Omega (- \Delta w_1) u_2dx=\int_\Gamma (\partial_n u_2 w_1 - \partial_n w_1 u_2)ds=0
\]
which indicates $u_2=0$ in $\Omega$, that is, $K\phi=0$. Due to the injectivity of $K$, we obtain $\phi=0$, and the proof is completed.

\subsection*{A.2. A boundary fitting framework}

There is an alternative method of reformulating the Cauchy problem (\cite{CN06,KK}).
\begin{problem}\label{prob:A1}
	With $(\Phi, T)\in H^{-1/2}(\Gamma_m)\times H^{1/2}(\Gamma_m)$, find $t\in H^{1/2}(\Gamma_u)$ such that, 
	\[
	u=T\ \textmd{on}\ \Gamma_m,
	\]
	where $u=u(\Phi,t)$ solves: 
	\begin{equation}
		\left\{
		\begin{array}{ll}
			- \Delta u = 0 & \textrm{in}\ \Omega,\\
			\partial_nu = \Phi & \textrm{on}\ \Gamma_m, \\
			u= t & \textrm{on}\ \Gamma_u.
		\end{array} \right.\label{A11}
	\end{equation}
\end{problem}

In the framework of Problem \ref{prob:A1}, the Dirichlet data is taken as the control variable. Once $t$ is recovered, the Neumann data on the inaccessible boundary is computed numerically through
\[
\varphi = \partial_n u\vert_{\Gamma_u}.
\]

For any $t\in H^{1/2}({\Gamma_u})$, $\Phi \in H^{-1/2}({\Gamma_m})$, we denote by $\tilde{u}=u(0,t)\in V$ and $\hat{u}=u(\Phi,0)\in V$ the weak solutions of the problem
\begin{equation}
	\left\{
	\begin{array}{ll}
		- \Delta \tilde{u} = 0 & \textrm{in}\ \Omega,\\
		\partial_n \tilde{u} = 0 & \textrm{on}\ \Gamma_m, \\
		\tilde{u} = t & \textrm{on}\ \Gamma_u,
	\end{array} \right.\label{A12}
\end{equation}
and the problem
\begin{equation}
	\left\{
	\begin{array}{ll}
		- \Delta \hat{u} = 0 & \textrm{in}\ \Omega,\\
		\partial_n \hat{u} = \Phi & \textrm{on}\ \Gamma_m, \\
		\hat{u} = 0 & \textrm{on}\ \Gamma_u.
	\end{array} \right.\label{A13}
\end{equation}
Then for any $t\in H^{1/2}({\Gamma_u})$, $\Phi \in H^{-1/2}({\Gamma_m})$, we have $u=\hat{u}+\tilde{u}$, and
\[
u=T \ \textmd{on}\ \Gamma_m
\]
reduces to
\[
\tilde{u} =f:= T- \hat{u} \ \textmd{on}\ \Gamma_m.
\]
Considering the noise, $f$ is modified to $f^\delta:=T^\delta- \hat{u}^\delta$, where $\hat{u}^\delta\in V$ is the weak solution of the problem (\ref{A13}) with $\Phi$ being replaced by noisy one $\Phi^\delta$. The noisy data is assumed to belong to the natural space $Q_{\Gamma_m}$ and satisfying
\[
\|\Phi^\delta - \Phi\|_{0,\Gamma_m} \leq \delta, \quad
\|T^\delta - T\|_{0,\Gamma_m} \leq \delta.
\]
Correspondingly, for any $t\in H^{1/2}({\Gamma_u})$, we view the trace $\tilde{u}\vert_{\Gamma_m}\in H^{1/2}({\Gamma_m})$ of $\tilde{u}\in V$, the weak solution of the problem (\ref{A12}), as an element in $Q_{\Gamma_m}$, and define a linear operator $K$ from $H^{1/2}({\Gamma_u})$ to $Q_{\Gamma_m}$ through
\[
t \to K\,t := \tilde{u}|_{\Gamma_m}.
\]
As a result, Problem \ref{prob:A1} can be transformed further, to the following operator equation:
\begin{equation}
	K \phi = f^\delta.\label{A4}
\end{equation}
The adjoint operator $K^*: Q_{\Gamma_u}\to H^{-1/2}({\Gamma_u})$ of $K$ is defined by $K^*v=-\partial_n w\vert_{\Gamma_u}$ for any $v\in Q_{\Gamma_m}$, where $w\in V$ is the weak solution of the adjoint problem
\[
\left\{
\begin{array}{ll}
	- \Delta w = 0 & \textrm{in}\ \Omega,\\
	\partial_n w = v & \textrm{on}\ \Gamma_m, \\
	w = 0 & \textrm{on}\ \Gamma_u.
\end{array} \right.
\]

\subsection*{A.3. A domain fitting framework}

Another domain fitting framework for the Cauchy problem is based on the reformulation of it as follows (\cite{AN}):
\begin{problem}\label{prob:B}
	With $(\Phi, T)\in H^{-1/2}(\Gamma_m)\times H^{1/2}(\Gamma_m)$, find $(\varphi,t)\in H^{-1/2}(\Gamma_u)\times H^{1/2}(\Gamma_u)$ such that
	\[
	u_1=u_2\ \textmd{in}\ \Omega,
	\]
	where $u_1=u_1(T,\varphi)$ and $u_2=u_2(\Phi,t)$ solve the BVPs
	\begin{equation}
		\left\{
		\begin{array}{ll}
			- \Delta u_1 = 0 & \textrm{in}\ \Omega,\\
			u_1 = T & \textrm{on}\ \Gamma_m, \\
			\partial_n u_1= \varphi & \textrm{on}\ \Gamma_u,
		\end{array} \right.\label{B1}
	\end{equation}
	and
	\begin{equation}
		\left\{
		\begin{array}{ll}
			- \Delta u_2 = 0 & \textrm{in}\ \Omega,\\
			\partial_n u_2= \Phi & \textrm{on}\ \Gamma_m, \\
			u_2 = t & \textrm{on}\ \Gamma_u,
		\end{array} \right.\label{B2}
	\end{equation}
	respectively.
\end{problem}

For any $\varphi \in H^{-1/2}({\Gamma_u})$, $T\in H^{1/2}({\Gamma_m})$, we denote by $\tilde{u}_1=u_1(0,\varphi), \hat{u}_1=u_1(T,0)\in V$. Then we have $u_1=\hat{u}_1+\tilde{u}_1$. Similarly, for any $t \in H^{1/2}({\Gamma_u})$, $\Phi \in H^{-1/2}({\Gamma_m})$, it holds that $u_2=\hat{u}_2+\tilde{u}_2$ with $\tilde{u}_2=u_2(0,t), \hat{u}_2=u_2(\Phi,0)\in V$.
Therefore,
\[
u_1=u_2\ \textmd{in}\ \Omega
\]
reduces to
\[
\tilde{u}_1- \tilde{u}_2= f:= \hat{u}_2- \hat{u}_1\ \textmd{in}\ \Omega.
\]
With noisy data $(\Phi^\delta, T^\delta)$, $f$ is modified to $f^\delta:=\hat{u}^\delta_2- \hat{u}^\delta_1$, where $\hat{u}^\delta_1=u_1(T^\delta,0), \hat{u}^\delta_2=u_2(\Phi^\delta,0)\in V$, and
\[
\|\Phi^\delta - \Phi\|_{-1/2,\Gamma_m} \leq \delta, \quad
\|T^\delta - T\|_{1/2,\Gamma_m} \leq \delta.
\]

Define a linear operator $K$ from $H^{-1/2}({\Gamma_u})\times H^{1/2}({\Gamma_u})$ to $Q$ through
\[
\phi \to K\,\phi := \tilde{u}_1-\tilde{u}_2,
\]
where for any $\phi=(\varphi,t)\in H^{-1/2}({\Gamma_u})\times H^{1/2}({\Gamma_u})$, $\tilde{u}_1=u_1(0,\varphi), \tilde{u}_2=u_1(0,t)\in V$ and is viewed as an element in $Q$.
As a result, Problem \ref{prob:B} can be transformed further, to the following operator equation:
\begin{equation}
	K \phi = f^\delta.\label{B3}
\end{equation}
The adjoint operator $K^*: Q\to H^{1/2}({\Gamma_u})\times H^{-1/2}({\Gamma_u})$ of $K$ is defined by $K^*v=(w_1, \partial_n w_2)$ for any $v\in Q$, where $w_1, w_2\in V$ are the weak solutions of the adjoint problems
\[
\left\{
\begin{array}{ll}
	- \Delta w_1 = v & \textrm{in}\ \Omega,\\
	w_1 = 0 & \textrm{on}\ \Gamma_m, \\
	\partial_n w_1 = 0 & \textrm{on}\ \Gamma_u,
\end{array} \right.
\]
and
\[
\left\{
\begin{array}{ll}
	- \Delta w_2 = v & \textrm{in}\ \Omega,\\
	\partial_n w_2 = 0 & \textrm{on}\ \Gamma_m, \\
	w_2 = 0 & \textrm{on}\ \Gamma_u,
\end{array} \right.
\]
respectively.

\subsection*{A.4. Proof of Lemma \ref{prop:Gk}}

We prove the Lemma by contradiction. By the second equation of (\ref{gkbmatix}), we have $\beta_j = (Kp_j,v_j)_{0,\Omega}$. Assume that for some positive constant $\delta$, the set $H = \{k\in\mathbb{N}: (Kp_k,v_k)_{0,\Omega} \geq 2\delta\}$ is infinite. Therefore $H$ has a countable subset which, by a change of notation, we can identify with $\mathbb{N}$. Thus $\{p_n\}^\infty_{n=1}$ is an orthonormal sequence and
\begin{equation}
	(Kp_n,v_n)_{0,\Omega} \geq 2\delta,\ n \geq 1.\label{beta}
\end{equation}
Since $K$ is compact, there is a subsequence $\{n(j)\}_{j \geq 1}$ such that $\{Kp_{n(j)}\} \to z \in Q$. By deleting a finite number of terms from this sequence, we may suppose that $\|Kp_{n(j)}-z \|_{0,\Omega} < \delta$ for all $j \geq 1$. Thus, $$\vert (Kp_{n(j)},v_{n(j)})_{0,\Omega} - (z,v_{n(j)})_{0,\Omega} \vert = \vert (Kp_{n(j)}-z,v_{n(j)})_{0,\Omega} \vert\leq \| Kp_{n(j)}-z \|_{0,\Omega} < \delta.$$
By (\ref{beta}) and using the reverse triangle inequality, $\vert \vert a\vert-\vert b\vert \vert \leq \vert a-b\vert$, we obtain
\[
\vert (z,v_{n(j)})_{0,\Omega} \vert > \delta,
\]
so that the series $\sum_{j \geq 1}\vert (z,v_{n(j)})_{0,\Omega} \vert^2$ diverges. This is a contradiction because $\{v_{n(j)}\}$ is an orthonormal system and therefore $\sum_{j \geq 1}\vert (z,v_{n(j)})_{0,\Omega} \vert^2 \leq \|z\|_{0,\Omega}$.

By the second equation of (\ref{gkbmatix}), we also have $\gamma_{j+1} = (Kp_{j},v_{j+1})_{0,\Omega}$. the same arguments ensure that $\{\gamma_{j+1}\}_{j\geq 2}\to 0$.

\subsection*{A.5. Proof of Lemma \ref{lemmaParameter}}

We prove the Lemma by mathematical induction. For $n = 1$, it follows from (\ref{right-handsidebound}) and the formulae for $\gamma_1^\delta, \gamma_1, v_1^\delta, v_1, \beta_1^\delta, \beta_1, p_1^\delta$ and $p_1$ in (\ref{gkb}) that we can yield
\begin{align*}
	&\vert\gamma_1^\delta - \gamma_1\vert = \vert \|f^\delta\|_{0,\Omega}  - \|f\|_{0,\Omega}  \vert \leq \|f^\delta - f\|_{0,\Omega} \leq C_f\delta,\\
	&\|v_1^\delta - v_1\|_{0,\Omega} = \| \frac{f^\delta}{\gamma_1^\delta} - \frac{f}{\gamma_1} \|_{0,\Omega} = \| \frac{(\gamma_1-\gamma_1^\delta)f^\delta+\gamma_1^\delta(f^\delta-f)}{\gamma_1^\delta\gamma_1} \|_{0,\Omega}\\
	& \leq \frac{1}{\gamma_1^\delta\gamma_1}(\vert\gamma_1^\delta - \gamma_1\vert\|f^\delta\|_{0,\Omega}+\|f^\delta - f\|_{0,\Omega}\gamma_1^\delta)\\
	& \leq \frac{1}{\gamma_1^\delta\gamma_1}(\|f^\delta - f\|_{0,\Omega}\gamma_1^\delta+\|f^\delta - f\|_{0,\Omega}\gamma_1^\delta) \leq \frac{2}{\gamma_1}C_f\delta \leq 2C_3 C_f\delta,\\
	&\vert \beta_1^\delta - \beta_1 \vert
	= \vert \|K^*v_1^\delta\|_{\textbf{Q}_{\Gamma_u}} - \|K^*v_1\|_{\textbf{Q}_{\Gamma_u}}\vert \leq \|K^*v_1^\delta - K^*v_1\|_{\textbf{Q}_{\Gamma_u}}\\ \hspace{-2.5cm}
	& \leq \|K^*\|\|v_1^\delta - v_1\|_{0,\Omega} \leq 2\|K\|C_3 C_f\delta,\\
	&\|p_1^\delta - p_1\|_{\textbf{Q}_{\Gamma_u}}
	= \|\frac{K^*v_1^\delta}{\beta_1^\delta} - \frac{K^*v_1}{\beta_1}\|_{\textbf{Q}_{\Gamma_u}}
	= \|\frac{(\beta_1 - \beta_1^\delta)K^*v_1^\delta + \beta_1^\delta(K^*v_1^\delta - K^*v_1)}{\beta_1^\delta\beta_1}\|_{\textbf{Q}_{\Gamma_u}} \\ 
	& \leq \frac{1}{\beta_1^\delta\beta_1}(\vert \beta_1 - \beta_1^\delta\vert\|K^*v_1^\delta\|_{\textbf{Q}_{\Gamma_u}} + \beta_1^\delta\|K^*v_1^\delta - K^*v_1\|_{\textbf{Q}_{\Gamma_u}})\\ 
	& \leq \frac{1}{\beta_1^\delta\beta_1}(\|K^*v_1^\delta - K^*v_1\|_{\textbf{Q}_{\Gamma_u}}\beta_1^\delta + \beta_1^\delta\|K^*v_1^\delta - K^*v_1\|_{\textbf{Q}_{\Gamma_u}})
\end{align*}
\begin{align*}
	\hspace{-8em}
	& \leq \frac{2\|K^*\|\|v_1^\delta - v_1\|_{0,\Omega}}{\beta_1}
	\leq 4\|K\|C_3^2C_f\delta.
\end{align*}
Thus, we obtain
\begin{align*}
	&\vert\gamma_1^\delta - \gamma_1\vert + \|v_1^\delta - v_1\|_{0,\Omega} + \vert \beta_1^\delta - \beta_1 \vert + \|p_1^\delta - p_1\|_{\textbf{Q}_{\Gamma_u}}\\
	&\leq (1+2C_3+2\|K\|C_3 +4\|K\|C^2_3)C_f\delta,
\end{align*}
which yields (\ref{lemmaParameterIneq}) for $k=1$ if $C_4\geq 1+2C_3+2\|K\|C_3 +4\|K\|C^2_3$.

In a similar way, a fixed number $C_4$ exists such that the inequality (\ref{lemmaParameterIneq}) holds for $k=2, \cdots, k_0$ with $k_0 = \lfloor (12C^{-2}_3(\|K\| + \beta_1 + 1))^{1/(1-2\theta)} \rfloor$.

Now, assume that for $n = k - 1$ ($k>k_0+1$), the following inequality holds
\begin{align*}
	&\vert \gamma_{k-1}^\delta - \gamma_{k-1} \vert +  \|v_{k-1}^\delta - v_{k-1}\|_{0,\Omega} + \vert \beta_{k-1}^\delta - \beta_{k-1} \vert + \|p_{k-1}^\delta - p_{k-1}\|_{\textbf{Q}_{\Gamma_u}}\\
	&\leq C_4 (k-1)! \delta.
\end{align*}

Next, we will prove that it holds for the case of $n=k$. In this case, by the formulae in (\ref{gkb}), Lemma \ref{prop:Gk}, $\|v_{k-1}^\delta\|_{0,\Omega}=\|p_{k-1}^\delta\|_{\textbf{Q}_{\Gamma_u}}=1$ and using the reverse triangle inequality, $\vert\|b\|-\|a\|\vert\leq \|b-a\|$, we have
\begin{align*}
	&\vert \gamma_{k}^\delta - \gamma_{k} \vert = \vert \|Kp_{k-1}^\delta - \beta_{k-1}^\delta v_{k-1}^\delta\|_{0,\Omega} - \|Kp_{k-1} - \beta_{k-1} v_{k-1}\|_{0,\Omega} \vert\\
	&\leq \|K(p_{k-1}^\delta - p_{k-1})+\beta_{k-1}v_{k-1}-\beta_{k-1}^\delta v_{k-1}^\delta\|_{0,\Omega}\\
	&= \|K(p_{k-1}^\delta - p_{k-1})+\beta_{k-1}(v_{k-1}-v_{k-1}^\delta)+(\beta_{k-1}-\beta_{k-1}^\delta) v_{k-1}^\delta\|_{0,\Omega}\\
	&\leq \|K\|\|p_{k-1}^\delta - p_{k-1}\|_{\textbf{Q}_{\Gamma_u}} + \beta_{k-1} \|v_{k-1}^\delta - v_{k-1}\|_{0,\Omega} + \vert \beta_{k-1}^\delta - \beta_{k-1} \vert \\
	&\leq C_4(\|K\| + \beta_1 + 1)(k-1)!\delta,\\
	&\| v_{k}^\delta - v_{k} \|_{0,\Omega} = \|\frac{Kp_{k-1}^\delta-\beta_{k-1}^\delta v_{k-1}^\delta}{\gamma_k^\delta}-\frac{Kp_{k-1}-\beta_{k-1} v_{k-1}}{\gamma_k}\|_{0,\Omega}\\
	&= \frac{1}{\gamma_k^\delta\gamma_k}\|\gamma_k(Kp_{k-1}^\delta-\beta_{k-1}^\delta v_{k-1}^\delta)-\gamma_k^\delta(Kp_{k-1}-\beta_{k-1}v_{k-1})\|_{0,\Omega}\\
	&=\frac{1}{\gamma_k^\delta\gamma_k}\|(\gamma_k-\gamma_k^\delta)(Kp_{k-1}^\delta-\beta_{k-1}^\delta v_{k-1}^\delta)+\gamma_k^\delta K(p_{k-1}^\delta-p_{k-1})\\
	&\quad+\gamma_k^\delta\beta_{k-1}(v_{k-1}-v_{k-1}^\delta)+\gamma_k^\delta(\beta_{k-1}-\beta_{k-1}^\delta)v_{k-1}^\delta\|_{0,\Omega}\\
	&\leq \frac{1}{\gamma_k}(\vert \gamma_{k}^\delta - \gamma_{k}\vert+\|K\|\|p_{k-1}^\delta - p_{k-1}\|_{\textbf{Q}_{\Gamma_u}}+\beta_{k-1} \|v_{k-1}^\delta - v_{k-1}\|_{0,\Omega}\\
	&\quad+\vert \beta_{k-1}^\delta - \beta_{k-1} \vert)\\
	&\leq \frac{2}{\gamma_k}(\|K\|\|p_{k-1}^\delta - p_{k-1}\|_{\textbf{Q}_{\Gamma_u}} + \beta_{k-1} \|v_{k-1}^\delta - v_{k-1}\|_{0,\Omega} + \vert \beta_{k-1}^\delta - \beta_{k-1} \vert)
\end{align*}
\begin{align*}
	&\leq \frac{2C_4 k^\theta}{C_3}(\|K\| + \beta_1 + 1)(k-1)!\delta,\\
	&\vert \beta_{k}^\delta - \beta_{k} \vert = \vert \|K^*v_k^\delta-\gamma_k^\delta p_{k-1}^\delta\|_{\textbf{Q}_{\Gamma_u}}-\|K^*v_k-\gamma_k p_{k-1}\|_{\textbf{Q}_{\Gamma_u}}\vert\\
	&\leq \|K^*(v_k^\delta-v_k)-\gamma_k^\delta p_{k-1}^\delta+\gamma_k p_{k-1}\|_{\textbf{Q}_{\Gamma_u}}\\
	&=\|K^*(v_k^\delta-v_k)+(\gamma_k-\gamma_k^\delta) p_{k-1}^\delta+\gamma_k(p_{k-1}-p_{k-1}^\delta)\|_{\textbf{Q}_{\Gamma_u}}\\
	&\leq \|K\|\|v_{k}^\delta - v_{k}\|_{0,\Omega}+ \vert \gamma_{k}^\delta - \gamma_{k} \vert + \gamma_{k} \|p_{k-1}^\delta - p_{k-1}\|_{\textbf{Q}_{\Gamma_u}} \\
	&\leq \big[\frac{2k^\theta}{C_3}(\|K\| + \beta_1 + 1) + (\|K\| + \beta_1 + 1) + \gamma_1\big] C_4(k-1)!\delta,\\
	&\| p_{k}^\delta - p_{k} \|_{\textbf{Q}_{\Gamma_u}} =\|\frac{K^*v_k^\delta-\gamma_k^\delta p_{k-1}^\delta}{\beta_k^\delta}-\frac{K^*v_k-\gamma_k p_{k-1}}{\beta_k}\|_{\textbf{Q}_{\Gamma_u}}\\
	&=\frac{1}{\beta_k^\delta\beta_k}\|\beta_k (K^*v_k^\delta-\gamma_k^\delta p_{k-1}^\delta)-\beta_k^\delta (K^*v_k-\gamma_kp_{k-1})\|_{\textbf{Q}_{\Gamma_u}}\\
	&=\frac{1}{\beta_k^\delta\beta_k}\|(\beta_k-\beta_k^\delta)(K^*v_k^\delta-\gamma_k^\delta p_{k-1}^\delta)+\beta_k^\delta K^*(v_k^\delta-v_k)+\beta_k^\delta(\gamma_k-\gamma_k^\delta)p_{k-1}^\delta\\
	&\quad+\beta_k^\delta\gamma_k(p_{k-1}-p_{k-1}^\delta)\|_{\textbf{Q}_{\Gamma_u}}\\
	&\leq \frac{1}{\beta_k}(\vert \beta_{k}^\delta - \beta_{k} \vert + \|K^*\|\|v_{k}^\delta - v_{k}\|_{0,\Omega} + \vert \gamma_{k}^\delta - \gamma_{k} \vert + \gamma_{k} \|p_{k-1}^\delta - p_{k-1}\|_{\textbf{Q}_{\Gamma_u}})\\
	&\leq \frac{2}{\beta_k}(\|K^*\|\|v_{k}^\delta - v_{k}\|_{0,\Omega} +  \vert \gamma_{k}^\delta - \gamma_{k} \vert + \gamma_{k} \|p_{k-1}^\delta - p_{k-1}\|_{\textbf{Q}_{\Gamma_u}})\\
	&\leq \frac{2}{C_3} k^\theta [\frac{2k^\theta}{C_3}(\|K\| + \beta_1 + 1) + (\|K\| + \beta_1 + 1) + \gamma_1] C_4(k-1)!\delta.
\end{align*}

Thus, by combining all four inequalities above, we deduce
\begin{align*}
	&\vert \gamma_k^\delta - \gamma_k \vert +  \|v_k^\delta - v_k\|_{0,\Omega} + \vert \beta_k^\delta - \beta_k \vert + \|p_k^\delta - p_k\|_{\textbf{Q}_{\Gamma_u}}\\
	&\leq \left[ 2(\|K\| + \beta_1 + 1) \left( \frac{2k^{2\theta}}{C^2_3} +  \frac{3k^{\theta}}{C_3} + 1 \right) \right] C_4(k-1)!\delta \\
	&\leq \left[ 2(\|K\| + \beta_1 + 1) \left( \frac{2}{C^2_3k^{1-2\theta}} +  \frac{3}{C_3k^{1-\theta}} + \frac{1}{k} \right) \right] C_4k!\delta,
\end{align*}
which yields the required inequality (\ref{lemmaParameterIneq}) by noting that $2(\|K\|+\beta_1+1)\left(\frac{2}{C^2_3k^{1-2\theta}}\right. \\ \left.+\frac{3}{C_3k^{1-\theta}}+\frac{1}{k} \right)<1$ for $k\geq k_0$.

\subsection*{A.6. Proof of Lemma \ref{ginverse}}

It follows from (\ref{gkbmatix}) that $G_k^\delta = (V_{k+1}^\delta)^*KP_k^\delta$ and $G_k = V_{k+1}^*KP_k$. On the other hand, we have (see e.g. \cite[Theorem 4.1]{PAW})
\begin{equation}
	\label{IneqGinverse1}
	\|B^\dagger - A^\dagger\|_2\leq \sqrt{2}\|B^\dagger\|_2\|A^\dagger\|_2\|B-A\|_2,
\end{equation}
where $A, B\in \mathbb{R}^{k\times j}$ satisfy $\textrm{rank}(A)=\textrm{rank}(B)$. By Lemma \ref{lemmaParameter} and the fact that $\|V_{k+1}^\star\|_{0,\Omega}=1$, we obtain
\begin{align}
	&\|G_k^\delta - G_k\|_2
	= \|(V_{k+1}^\delta)^*KP_k^\delta - V_{k+1}^*KP_k\|_2\nonumber \\
	& \leq \|((V_{k+1}^\delta)^* - V_{k+1}^*)KP_k^\delta + V_{k+1}^*K(P_k^\delta - P_k)\|_2\nonumber\\
	& \leq \|(V_{k+1}^\delta)^* - V_{k+1}^*\|_{0,\Omega}\|K\| + \|P_k^\delta - P_k\|_{\textbf{Q}_{\Gamma_u}}\|K\|\nonumber\\
	& \leq (\sum_{j = 1}^{k+1} \|v_j^\delta - v_j\|_{0,\Omega} + \sum_{i = 1}^{k} \|p_i^\delta - p_i\|_{\textbf{Q}_{\Gamma_u}})\|K\|
	\leq 2C_4(k+1)!\delta.
	\label{matrixInequality}
\end{align}
Combining inequalities (\ref{IneqGinverse1}) and (\ref{matrixInequality}), we complete the proof.

\subsection*{A.7. Proof of Lemma \ref{kRate}}

Now, we prove the first inequality. It follows from $\vert \bar{\mu}_k\vert\leq C_be^{-pk\ln k}$ that
\begin{equation}\label{eq: 6.1}
	k\ln k\leq \frac{1}{p}\ln C_b + \frac{1}{p}\ln (\vert\bar{\mu}_k\vert^{-1}).
\end{equation}
Using the inequality $\frac{p-1}{p}\ln x>p\ln \ln x-p\ln{\frac{p^2}{p-1}}, p>1$, we have
\[
\frac{1}{p}\ln C_b + \frac{1}{p}\ln (\vert\bar{\mu}_k\vert^{-1})\leq \ln C_6 + \ln(\vert\bar{\mu}_k\vert^{-1})-p\ln\ln(\vert\bar{\mu}_k\vert^{-1}),
\]
where $C_6 = C_b^{1/p}(\frac{p^2}{p-1})^p$. From (\ref{eq: 6.1}) and the above inequality, we obtain
\begin{equation}\label{eq: 6.2}
	k^k\leq C_6\vert\bar{\mu}_k\vert^{-1}\ln^{-p}(\vert\bar{\mu}_k\vert^{-1}).
\end{equation}
By Stirling's formula, we yield
\begin{align}\label{eq: 6.3}
	(k+3)!&\leq e\sqrt{2\pi}(k+3)^{k+\frac{7}{2}}e^{-(k+3)}
	= e\sqrt{2\pi}(\frac{k+3}{k})^{k+\frac{7}{2}}k^{k+\frac{7}{2}}e^{-(k+3)}\nonumber\\
	&= e\sqrt{2\pi}[(1+\frac{3}{k})^{\frac{k}{3}}]^{(3+\frac{21}{2k})}k^k k^{\frac{7}{2}}e^{-(k+3)}.
\end{align}
Since $(1+\frac{3}{k})^{k/3}$ increases monotonically and converges to $e$, and $k^{7/2}<e^{k+3}$ for all $k$, we obtain
\[
(k+3)!\leq e\sqrt{2\pi}e^{3+21/(2k)}k^k\leq C_7\vert\bar{\mu}_k\vert^{-1}\ln^{-p}(\vert\bar{\mu}_k\vert^{-1}),
\]
where $C_7 = \sqrt{2\pi}e^{14.5}$.

Next, we prove the second inequality. From $\vert\bar{\mu}_{k+1}\vert\geq C_ae^{-e^{(k+1)/\sigma}}$ and $(k+1)/\sigma >0$, we have
\[
\ln(\vert\bar{\mu}_{k+1}\vert^{-1})\leq -\ln C_a + e^{(k+1)/\sigma}
\leq \max\{\ln C_a^{-1}, 0\}+ e^{(k+1)/\sigma}
\leq C_8e^{(k+1)/\sigma},
\]
where $C_8=1+\max\{\ln C_a^{-1}, 0\}$. Taking logarithms on both sides of the above inequality and multiplying by $p$, we derive the following
\begin{equation}\label{eq: 6.4}
	p\ln\ln(\vert\bar{\mu}_{k+1}\vert^{-1})\leq p\ln C_8+\frac{p}{\sigma}(k+1).
\end{equation}
On the other hand, due to $\sigma\geq p/(2\nu\ln(\rho^{-1}))$, we have
\begin{equation}\label{eq: 6.5}
	p\ln C_8 + \frac{p}{\sigma}(k+1) \leq \ln C_8^p + 2\nu(k+1)\ln(\rho^{-1}).
\end{equation}
Combining (\ref{eq: 6.4}) and (\ref{eq: 6.5}), we obtain
\begin{eqnarray*}
	\rho^{2\nu(k+1)} \leq C_8^p \ln^{-p} (\vert\bar{\mu}_{k+1}\vert ^{-1}).
\end{eqnarray*}
Then, taking $C_5 = \max\{C_7, C_8^p\}$, we obtain (\ref{kinequality}).

\let\thefootnote\relax\footnotetext{Please cite to this paper as published in: 
	
	 Journal of Computational and Applied Mathematics, 432 (2023), pp. 115282, DOI: https://doi.org/10.1016/j.cam.2023.115282.}
\end{document}